\documentclass[reqno,12pt]{amsart}
\usepackage{a4wide,color,eucal,enumerate,mathrsfs}
\usepackage[normalem]{ulem}
\usepackage{amsmath,amssymb,epsfig,amsthm} 
\usepackage[latin1]{inputenc}
\usepackage{graphicx}
\usepackage{psfrag}
\usepackage{hyperref,cleveref,tikz}
\usepackage{pict2e}

\def\inter{{\operatorname{int}}}

\def\vint{\mathop{\mathchoice%
		{\setbox0\hbox{$\displaystyle\intop$}\kern 0.22\wd0%
			\vcenter{\hrule width 0.6\wd0}\kern -0.82\wd0}%
		{\setbox0\hbox{$\textstyle\intop$}\kern 0.2\wd0%
			\vcenter{\hrule width 0.6\wd0}\kern -0.8\wd0}%
		{\setbox0\hbox{$\scriptstyle\intop$}\kern 0.2\wd0%
			\vcenter{\hrule width 0.6\wd0}\kern -0.8\wd0}%
		{\setbox0\hbox{$\scriptscriptstyle\intop$}\kern 0.2\wd0%
			\vcenter{\hrule width 0.6\wd0}\kern -0.8\wd0}}%
	\mathopen{}\int}

\def\az{\alpha}

\def\dist{{\mathop\mathrm{\,dist\,}}}

\def\dz{\delta}

\def\ez{\epsilon}

\def\bz{\beta}

\newcommand{\average}{{\mathchoice {\kern1ex\vcenter{\hrule height.4pt
width 6pt depth0pt} \kern-9.7pt} {\kern1ex\vcenter{\hrule
height.4pt width 4.3pt depth0pt} \kern-7pt} {} {} }}

\def\bint{{\ifinner\rlap{\bf\kern.35em--}
\int\else\rlap{\bf\kern.45em--}\int\fi}\ignorespaces}

\def\bbint{{\ifinner\rlap{\bf\kern.35em--}
\hspace{0.078cm}\int\else\rlap{\bf\kern.45em--}\int\fi}\ignorespaces}

\def\diam{{\mathop\mathrm{\,diam\,}}}

\def\diver{{\mathop{\mathrm{div}}}}

\newcommand{\R}{\mathbb{R}}

\newtheorem{thm}{Theorem}[section]
\newtheorem{lem}[thm]{Lemma}
\newtheorem{prop}[thm]{Proposition}
\newtheorem{cor}[thm]{Corollary}
\newtheorem{defn}[thm]{Definition}
\numberwithin{equation}{section}

\theoremstyle{remark}
\newtheorem{rem}[thm]{Remark}

\def\bint{{\ifinner\rlap{\bf\kern.35em--}
\int\else\rlap{\bf\kern.45em--}\int\fi}\ignorespaces}

\usepackage{graphicx}
\usepackage{import}
\usepackage{xifthen}
\usepackage{pdfpages}
\usepackage{transparent}

\title{Geometric properties of Euclidean domains supporting trace inequalities}
\author{Weicong Su, Zhuang Wang, Yi Ru-Ya Zhang }
\date{\today}

\address{State Key Laboratory of Mathematical Sciences, Academy of Mathematics and Systems Science, Chinese Academy of Sciences, Beijing 100190, China}
\address{Institute of Mathematics, Academy of Mathematics and Systems Science, the Chinese Academy of Sciences, Beijing 100190, China}
\email{suweicong@amss.ac.cn}
\email{yzhang@amss.ac.cn}
\address{Key Laboratory of Computing and Stochastic Mathematics (Ministry of Education), School of Mathematics and Statistics, Hunan Normal University, Changsha, Hunan 410081, China.}
\email{zwang@hunnu.edu.cn}

 \thanks{The first and the third authors are funded by the National Key R\&D Program of China (Grant No. 2025YFA1018400 \&  No. 2021YFA1003100), NSFC Grant No. 12288201 \& No. 12571128, the Chinese Academy of Sciences, and CAS Project for Young Scientists in Basic Research, Grant No. YSBR-031. The second author is funded by the Natural Science Foundation of Hunan Province (Grant No. 2024JJ6299 \& No. 2026JJ30002), the Scientific Research Fund of Hunan Provincial Education Department (Project No. 25B0095), and NSFC Grant No. 12101226 \& No. 12371071.}

\subjclass[2020]{46E35}
\keywords{trace inequality, set of finite perimeter, John domain.}

\begin{document}

\begin{abstract}
We investigate the geometric behavior of $\tau(E)$ for bounded finite-perimeter sets $E \subset \mathbb R^n$, where $\tau(E)$ is the trace constant introduced by Figalli--Maggi--Pratelli [Invent. Math. 2010]. This quantity is a key ingredient in proving a quantitative isoperimetric inequality with the optimal exponent.

We first show that for every $\ez>0$ one can find a bounded open set $\Omega \subset \mathbb R^n$ that is very close to the unit ball $\mathbb B^n$ in the sense that
$$
\tau(\mathbb B^n)>\tau(\Omega)>\tau(\mathbb B^n)-\ez \quad \text{and} \quad P(\Omega \Delta \mathbb B^n)\le C(n)\epsilon,
$$
while at the same time the complement of $\Omega$ has infinitely many connected components. Thus, $\tau(\Omega)$ can be made arbitrarily close to $\tau(\mathbb B^n)$ even when $\Omega$ has highly intricate geometry.

We then establish, under a mild additional hypothesis, the equivalence between a condition formulated in terms of $\tau$ and two classical criteria from the literature for open sets that admit trace inequalities. As a consequence, we obtain the John-type characterization of domains that support a trace inequality, assuming the ball separation property.
\end{abstract}

\maketitle
 
\section{Introduction}

In their seminal work \cite{FMP2010}, Figalli, Maggi and Pratelli applied a mass transportation approach to show a quantitative version of the isoperimetric inequality, a high-profile topic, stated as follows:
For any set of finite perimeter $E\subset\mathbb R^n$ with $|E|=|\mathbb B^n|$, where $\mathbb B^n$ denotes the standard unit Euclidean ball in $\mathbb R^n$, it holds that 
\begin{equation}\label{quan}
    P(E)-P(\mathbb B^n)\ge c(n)\min_{x\in\mathbb R^n}|E\Delta (x+\mathbb B^n)|^2.
\end{equation}
 Remarkably, the constant $c(n)>0$ is independent of the set $E$.
Here the Lebesgue measure $\mathcal L^n$ of the set $E\subset\mathbb{R}^n$ is denoted by $\vert E\vert$, and the symmetric difference between $E$ and $x+\mathbb B^n$ is denoted by $E\Delta(x+\mathbb B^n)$. For any function $u$ that is locally integrable on $E$ and whose distributional derivative is a Radon measure, we denote the total variation measure of $u$ by $\|Du\|(E)$, and define the perimeter of $E$  as 
$$P(E):=\|D\chi_E\|(\mathbb R^n);$$
see Section \ref{prelim} for preliminaries and more specific definitions. 

A crucial step in their proof is to show that, any set of finite perimeter $E\subset\mathbb R^n$ can be modified to a new (open) set $G$, which in particular satisfies
$
      \tau(G)\ge 1+\delta_0,
$
where the definition of $\tau$ is given as follows; we refer to \cite[Theorem 3.4]{FMP2010} for more detailed information. 
\begin{defn}\label{trace cons}
    Let $E\subset \mathbb R^n$ be a set of finite perimeter with $0<|E|<+\infty$. The trace constant $\tau(E)$ of $E$ is defined by setting
    $$\tau(E):=\inf\left\{\frac{P(F)}{\mathcal{H}^{n-1}(\partial^* E\cap \partial^* F)}:F\subset E, 0<|F|\le \frac{|E|}{2}\right\}\ge 1.$$
\end{defn}

The constant $\tau(\mathbb B^n)$ has already  been studied in \cite{CFNT2018}. To be more specific, let $O$ be the origin and $B(P,r)$ be the ball with radius $r>0$ and center $P$ located at the negative $x_1$-half-axis, such that 
$\partial B(P,r)\cap \mathbb S^{n-1}\neq \emptyset$. Choose a point $Q\in \partial B(P,r)\cap \mathbb S^{n-1}$.
Let 
$$E_{\varphi,\vartheta}=\mathbb B^n\setminus \overline{B}(P,\,r)$$
be the half-moon shaped set as in Figure \ref{fig:example1}, where $\vartheta\in (0,\pi)$ denotes the angle between the vector $\overrightarrow{PO}$ and the vector $\overrightarrow{OQ}$, and
 $\varphi\in (0, \vartheta)$ denotes the angle between 
$\overrightarrow{PO}$ and the vector $\overrightarrow{PQ}$.

Observe that the operator $K_{\rm med}$ defined in \cite[(2.25)]{CFNT2018} satisfies
$$\tau(E)=\frac{1}{K_{\rm med}(E)}+1. $$
Then \cite[Theorem 2.3]{CFNT2018} together with \cite[Theorem 4.1]{CFNT2018} implies the following. 

\begin{thm}[{\cite[Theorem 4.1]{CFNT2018}}]\label{cite cianchi}
    The infimum in the definition of $\tau(\mathbb B^n)$ is attained when $F$ is exactly taken as $E_{\varphi, \vartheta}$, where $ \varphi$ and $ \vartheta$ satisfy the condition \cite[(4.2)]{CFNT2018} with $\varphi_\rho=\varphi,\, \vartheta_\rho=\vartheta$ and $\rho=1/2$ there. In particular, one has 
\begin{equation}\label{tau b}
    \tau(\mathbb B^n)=\frac{P(E_{ \varphi, \vartheta})}{\mathcal{H}^{n-1}(\partial^* E_{ \varphi,\vartheta}\cap \mathbb{S}^{n-1})}>1, 
\end{equation}
and via \cite[Lemma 4.3]{CFNT2018}, 
\begin{equation}\label{volumn equi}
    |E_{ \varphi, \vartheta}|=|\mathbb B^n\setminus E_{ \varphi, \vartheta}|.
\end{equation}
\end{thm}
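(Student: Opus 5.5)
This statement is imported from \cite{CFNT2018}, so the plan is to reduce it to the results cited there rather than prove it from scratch. The dictionary is the identity $\tau(E)=1/K_{\rm med}(E)+1$. In \cite[(2.25)]{CFNT2018}, $K_{\rm med}(E)$ is the supremum of $\mathcal H^{n-1}(\partial^*E\cap\partial^*F)/\mathcal H^{n-1}(E\cap\partial^*F)$ over $F\subset E$ with $0<|F|\le|E|/2$. Since $P(F)=\mathcal H^{n-1}(\partial^*F\cap\partial^*E)+\mathcal H^{n-1}(\partial^*F\cap E^{(1)})$ up to null sets, one checks that $P(F)/\mathcal H^{n-1}(\partial^*E\cap\partial^*F)=1+\mathcal H^{n-1}(E\cap\partial^*F)/\mathcal H^{n-1}(\partial^*E\cap\partial^*F)$. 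Taking the infimum on the left corresponds to taking the supremum in $K_{\rm med}$, which gives the identity. In particular, a set is a minimizer for $\tau(\mathbb B^n)$ exactly when it is a maximizer for $K_{\rm med}(\mathbb B^n)$.

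The first step is existence and structure of optimizers. \cite[Theorem 2.3]{CFNT2018} provides existence, and the optimal $F$ may be assumed to have $|F|=|E|/2$ (the median constraint). \cite[Theorem 4.1]{CFNT2018} then identifies the extremal set for the ball, with $\rho=1/2$, as the lune $\mathbb B^n\setminus\overline B(P,r)$. Its free boundary $\partial B(P,r)\cap\mathbb B^n$ is a spherical cap, consistent with constant mean curvature. Its angles are governed by \cite[(4.2)]{CFNT2018}, which encodes the volume constraint and the stationarity of the contact angle. Translating the angle names to $\varphi,\vartheta$ then gives that $E_{\varphi,\vartheta}$ attains the infimum, and hence that \eqref{tau b} holds.

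The volume identity \eqref{volumn equi} is \cite[Lemma 4.3]{CFNT2018}. It says that the parameters from \cite[(4.2)]{CFNT2018} with $\rho=1/2$ give $|E_{\varphi,\vartheta}|=|\mathbb B^n|/2$.

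For the strict inequality $\tau(\mathbb B^n)>1$, I would use that $\mathcal H^{n-1}(\mathbb B^n\cap\partial B(P,r))>0$, since the cap is nondegenerate when $0<\varphi<\vartheta<\pi$. This gives $K_{\rm med}<\infty$, and hence $\tau>1$. Alternatively, it follows from a relative isoperimetric inequality on the ball.

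I expect the main obstacle to be bookkeeping rather than mathematics. One must check that the normalization of $K_{\rm med}$ in \cite{CFNT2018} matches Definition \ref{trace cons}, including whether the constraint is $|F|\le|E|/2$ or a median condition, and whether boundaries are taken as reduced boundaries. One must also check that the angle conventions in \cite[(4.2)]{CFNT2018} translate to the $\varphi,\vartheta$ of Figure \ref{fig:example1}. I would verify the first point via the perimeter decomposition above, and the second by directly comparing the geometric construction of $P$, $r$ and $Q$.
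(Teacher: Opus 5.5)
Your proposal is correct and matches the paper's route. The statement is imported from \cite{CFNT2018} through the identity $\tau(E)=1/K_{\rm med}(E)+1$, which you justify properly via $P(F)=\mathcal H^{n-1}(\partial^*F\cap\partial^*E)+P(F;E^{(1)})$ for $F\subset E$; \cite[Theorems 2.3 and 4.1]{CFNT2018} then identify the extremal set $E_{\varphi,\vartheta}$, \cite[Lemma 4.3]{CFNT2018} gives \eqref{volumn equi}, and your remark that $\tau(\mathbb B^n)>1$ follows from attainment together with $P(E_{\varphi,\vartheta};\mathbb B^n)>0$ makes explicit a point the paper leaves implicit.
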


\begin{figure}[htbp] 
  \centering
\includegraphics[width=0.6\textwidth]{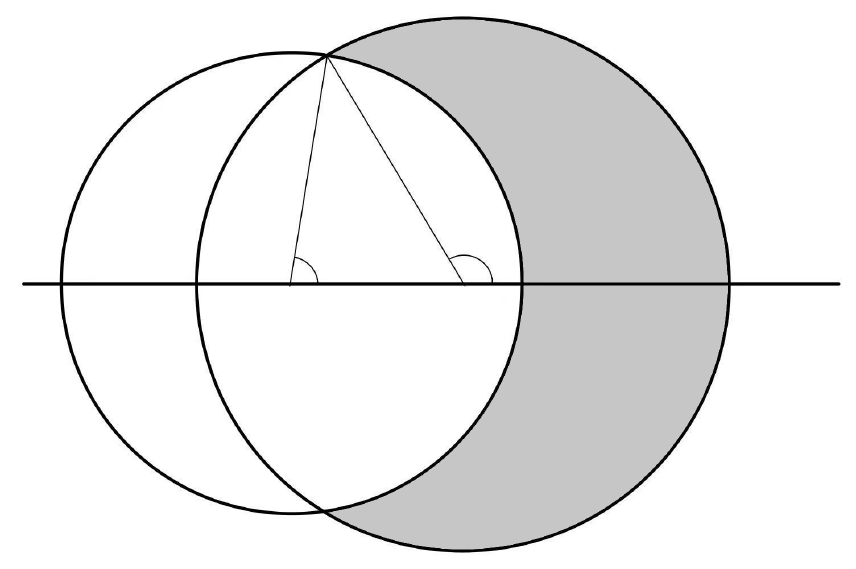}
   \caption{The half-moon shaped set $E_{\varphi,\vartheta}$ is colored in grey.}
   \label{fig:example1}
   \begin{picture}(0,0)(250,-10)
       \put(150,205){$B$} \put(250,115){$O$} \put(200,115){$P$} \put(335,200){$\mathbb B^n$} \put(263,140){$\vartheta$} \put(214,136){$\varphi$} \put(290,160){$E_{\varphi,\vartheta}$}
       \put(209, 211){$Q$}
       \put(369,117){$x_1$}
         \linethickness{1.0pt}
  \put(111,128){\vector(1,0){287}} 
   \end{picture}
\end{figure}

Let $\Omega\subset \mathbb R^n$ be an open set.  We say that $\Omega$ supports a $(1,1)$-trace inequality for functions of bounded variation if there exists a bounded linear operator $T: BV(\Omega)\to L^1(\partial\Omega)$, such that 
\begin{equation}\label{11trace}
\inf_{c\in \mathbb R}\int_{\partial\Omega}|Tu-c|\,d\mathcal{H}^{n-1}\le C_T\|Du\|(\Omega)\quad 
\text{for any }u\in BV(\Omega)
\end{equation}
holds for some positive constant $C_T$ independent of $u$. Then thanks to  $\tau(G)\ge 1+\delta_0$, one  gets that $G$ supports a trace inequality of the following form:
$$\inf_{c\in\mathbb R^n}\int_{\partial^* G}|tr_{G}u-c|\,d\mathcal{H}^{n-1}\le C(n)  {\delta_0}^{-1}\|Du\|(G^{(1)})\quad \text{for any }u\in BV(\mathbb R^n)\cap L^{\infty}(\mathbb R^n),$$
 where $tr_{G}u$, called the inner trace of $u$ on $G$, is defined on the reduced boundary $\partial^* G$; see \cite[Lemma 3.1]{FMP2010}. 
The  construction of $G$ is rather subtle and is obtained via finding a maximal critical set in $E$ (see \cite[Lemma 3.2]{FMP2010}). According to the trace inequality above, with some nontrivial effort  one can finally deduce that the original set $E$ satisfies \eqref{quan}.

For classical treatments of traces of $BV$-functions in Euclidean spaces, we refer to \cite[Chapter 3]{AFP2000} and \cite[Chapter 5]{EG92}. Recent developments in metric measure spaces can be found in works such as \cite{BM2021, L2023, LS2018, MSS2018}.
The aim of the current manuscript is to further study the geometric properties of the trace constant $\tau$.

We start with the following example, telling that without any further assumptions,
the geometry of a set could be bad even if the trace constant of such a set is close to $\tau(\mathbb B^n)$. This example was briefly mentioned in \cite[Page 578]{BK1995} for Sobolev--Poincar\'e inequalities in  planar domains. 

\begin{thm}\label{counexample}
 For any $\epsilon>0$, there exists an open set $\Omega\subset \mathbb R^n,\, n\ge 2$ of finite perimeter, $|\Omega|<+\infty$,  $\mathcal H^{n-1} (\partial\Omega\setminus\partial^*\Omega)=0$, close to the unit ball in the sense that
 $$\Omega= \mathbb B^n\setminus D,\quad P(\Omega\Delta \mathbb B^n)\le C(n)\epsilon,$$
 together with 
  $$\tau(\mathbb B^n )>\tau(\Omega)>\tau(\mathbb B^n)-\epsilon.$$   
Moreover, up to a modification on a set of $\mathcal L^{n}$-measure zero,
$D$  consists of countably infinitely many balls. 
\end{thm}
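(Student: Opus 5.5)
We construct $\Omega=\mathbb B^n\setminus D$, where $D=\bigcup_{k\ge1}\overline B(x_k,r_k)$ is a union of countably many pairwise disjoint closed balls. The balls lie in a small compact region of $\mathbb B^n$ away from $\mathbb S^{n-1}$, say inside $B(0,1/4)$, and they accumulate at a single point $x_\infty$. The radii are chosen with $\sum_k r_k^{n-1}\le \epsilon$.

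\textbf{Elementary properties.} Removing a point set of measure zero leaves $\Omega$ open. We get $P(\Omega\Delta\mathbb B^n)=P(D)\le n\omega_n\sum r_k^{n-1}\le C(n)\epsilon$. Also $\partial\Omega\setminus\partial^*\Omega\subset\{x_\infty\}$ has $\mathcal H^{n-1}$-measure zero. The complement of $\Omega$ has the components $\overline B(x_k,r_k)$ together with $\{x_\infty\}$ and $\mathbb R^n\setminus\mathbb B^n$, so there are infinitely many. Up to the null set $\{x_\infty\}$, $D$ is a union of balls.

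\textbf{Upper bound $\tau(\Omega)<\tau(\mathbb B^n)$.} Take the optimal half-moon $E_{\varphi,\vartheta}$ from Theorem \ref{cite cianchi}, rotated so that it is disjoint from $B(0,1/4)$; this is possible since it is bounded away from the center side opposite to it. Then perturb it slightly. One option is $F=E_{\varphi,\vartheta}\setminus\overline B(x_1,r_1)$, after first positioning one small ball $\overline B(x_1,r_1)$ inside $E_{\varphi,\vartheta}$ near $\partial B(P,r)$, so that the removed ball cuts away part of the interior interface. A cleaner option is to use strictness through the volume constraint. Since $|\Omega|<|\mathbb B^n|$, the set $F'=E_{\varphi,\vartheta}$ (disjoint from $D$) has $|F'|=|\mathbb B^n|/2>|\Omega|/2$, so we must shrink it. Instead, take $F=\Omega\setminus E_{\varphi,\vartheta}$, the complementary piece, which contains $D$'s holes. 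Then $|F|=|\mathbb B^n|/2-|D|<|\Omega|/2$. Its boundary meeting $\partial^*\Omega$ gains $P(D)$, while $P(F)$ gains the same $P(D)$. The ratio therefore becomes $\frac{a+P(D)}{b+P(D)}$, where $a/b=\tau(\mathbb B^n)>1$, which is strictly smaller than $\tau(\mathbb B^n)$. This requires using the symmetry \eqref{volumn equi}: by the optimality in \eqref{tau b}, the complementary piece realizes the same ratio. That gives $\tau(\Omega)<\tau(\mathbb B^n)$.

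\textbf{Lower bound (main obstacle).} Take any admissible $F\subset\Omega$ with $|F|\le|\Omega|/2$. Then
\[
\mathcal H^{n-1}(\partial^*\Omega\cap\partial^*F)\le \mathcal H^{n-1}(\mathbb S^{n-1}\cap\partial^*F)+P(D).
\]
We would like to compare with $\tau(\mathbb B^n)$ applied to $F$ viewed inside $\mathbb B^n$, but $|F|$ may slightly exceed $|\mathbb B^n|/2$ fails only by nothing: $|F|\le|\Omega|/2\le|\mathbb B^n|/2$, so $F$ is admissible for $\mathbb B^n$. Note that $\partial^*F\cap\partial^*\mathbb B^n$ is just the $\mathbb S^{n-1}$ part. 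Hence
\[
P(F)\ge\tau(\mathbb B^n)\,\mathcal H^{n-1}(\mathbb S^{n-1}\cap\partial^*F).
\]
The difficulty is sets $F$ whose boundary overlap is mostly on $\partial D$, i.e. small $F$ hugging the holes. For these we need $P(F)\ge c\,\mathcal H^{n-1}(\partial^*F\cap\partial D)$ with a good constant. Here we use relative isoperimetry in the region $B(0,1/2)\setminus D$, or simply the fact that $P(F)\ge \mathcal H^{n-1}(\partial^*F\cap\partial D)$ trivially. Write $s=\mathcal H^{n-1}(\mathbb S^{n-1}\cap\partial^*F)$ and $t\le P(D)\le C\epsilon$. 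Then $P(F)\ge\max(\tau s,\,t)$ while the denominator is $s+t$. If $s\ge \sqrt\epsilon$, the ratio is at least $\tau s/(s+C\epsilon)\ge\tau-C'\sqrt\epsilon$. If $s<\sqrt\epsilon$, we need $P(F)$ much larger than $s+t$. The planned fix is to separate the balls in space so that each is well isolated. We then use that for tiny balls, enclosing a portion of $\partial B(x_k,r_k)$ costs perimeter comparable to $(\tau(\text{complement of ball})\approx$ large$)$. Concretely, an exterior isoperimetric argument for small sets around a ball gives a ratio $\ge \tau(\mathbb B^n)$ once the $F$ near each hole has volume small relative to $r_k^n$ or otherwise pays more. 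Making this quantitative, possibly by choosing the $r_k$ rapidly decreasing and rescaling $\epsilon$, is where I expect the real work.
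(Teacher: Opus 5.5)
There are two genuine gaps: the strict inequality $\tau(\Omega)<\tau(\mathbb B^n)$ is argued incorrectly, and the lower bound $\tau(\Omega)>\tau(\mathbb B^n)-\epsilon$ is not proved.

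\textbf{Strictness $\tau(\Omega)<\tau(\mathbb B^n)$.} Your ``cleaner option'' claims that the complementary lens $L=\mathbb B^n\cap\overline B(P,r)$ realizes the same ratio as $E_{\varphi,\vartheta}$. Optimality only gives $P(L)/\mathcal H^{n-1}(\partial^*L\cap\mathbb S^{n-1})\ge\tau(\mathbb B^n)$, and here the inequality is strict.
\begin{itemize}
\item Both sets share the interface, of area $I=P(E_{\varphi,\vartheta};\mathbb B^n)$.
\item The crescent meets $\mathbb S^{n-1}$ in the cap $\{x_1>\cos\vartheta\}$.
\item If $\vartheta\le\pi/2$, then $\overline B(P,r)\supset\mathbb B^n\cap\{x_1\le 0\}$ and $O\in B(P,r)$, which contradicts \eqref{volumn equi}.
\end{itemize}
Hence the lens has strictly smaller contact $s'<s$, and ratio $1+I/s'>1+I/s=\tau(\mathbb B^n)$. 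Your quotient $(a+P(D))/(b+P(D))$ then drops below $\tau(\mathbb B^n)$ only if $P(D)>s-s'$. That is a fixed positive number, incompatible with $P(D)\le C\epsilon$.

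The fix is to put the holes on the crescent side, as in Lemma \ref{k0}. In your geometry, place the whole cluster (including $x_\infty$) compactly inside the open set $E_{\varphi,\vartheta}$. Then $F=E_{\varphi,\vartheta}\cap\Omega$ has $|F|=|\mathbb B^n|/2-|D|<|\Omega|/2$ and ratio $1+I/(s+P(D))<\tau(\mathbb B^n)$. Your first option points this way, but it contradicts your placement of $D$ away from the crescent and ignores the constraint $|F|\le|\Omega|/2$.

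\textbf{The lower bound $\tau(\Omega)>\tau(\mathbb B^n)-\epsilon$.} This is the substance of the theorem, and the proposal stops where it begins. Your case $s\ge\sqrt\epsilon$ is fine. For $s<\sqrt\epsilon$, your bounds $P(F)\ge\tau(\mathbb B^n)s$ and $P(F)\ge t$ only give $P(F)\ge\max\{\tau(\mathbb B^n)s,t\}$. Against the denominator $s+t$ this can be as small as $\tau(\mathbb B^n)/(1+\tau(\mathbb B^n))<1$.

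What is needed is an additive bound $P(F;\Omega)\ge(\tau(\mathbb B^n)-1-o(1))(s+t)$, uniform over all admissible $F$. This includes sets that cut through or hug many holes and sets that swallow infinitely many of them. For an arbitrary finite-perimeter $F$ there is no local lower bound on its perimeter near a hole, so a hole-by-hole argument does not work directly.

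The paper supplies the missing mechanism in four steps.
\begin{itemize}
\item It smooths $F$ and replaces it by a Plateau minimizer $U$ with the same trace outside $\Omega^\delta$. This $U$ satisfies $P(U;B(y,\rho))\ge c(n)\rho^{n-1}$ at interior boundary points.
\item Each hole $\overline B^\delta_{x,k}$ is surrounded by a protective ball $B_{x,k}$ of vastly larger radius.
\item Holes are sorted according to where a neighbourhood of $\partial B_{x,k}$ sits relative to $U$:
  \begin{itemize}
  \item If it lies in $U$, all holes of that and later generations have total area negligible against $|B_{x_U,k_U}|^{(n-1)/n}$, hence against $P(U_0;\mathbb B^n)$ by relative isoperimetry.
  \item If it meets $\partial U$, the density bound in the annulus dominates the hole's area.
  \item If it misses $\overline U$, convexity gives $\mathcal H^{n-1}(\partial B^\delta_{x,k}\cap U)\le P(U_{x,k};(B^\delta_{x,k})^c)$. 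This suffices because $\tau(\mathbb B^n)-1<1$.
  \end{itemize}
\item The $\mathbb S^{n-1}$ part is handled by applying $\tau(\mathbb B^n)$ to $\Omega^\delta\cap\widetilde U$ and absorbing the hole terms.
\end{itemize}

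Your idea of isolated balls with rapidly decaying radii points in this direction. A single interior accumulation point should fit the same scheme, and it suffices for the theorem as stated. Unlike $\Omega^\delta$, though, such a domain need not fail the John and ball separation properties of Lemma \ref{ch3 fin per}. In any case, without the passage to a minimizer and a quantitative choice of radii versus separation, the lower bound is not established.
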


Regarding the geometric characterization of domains supporting the $(1,1)$-trace inequality \eqref{11trace}, a lot of pioneering work has already been done. For instance, under the additional assumptions
\begin{equation}\label{almost theoretic}
            P(\Omega)<+\infty,\quad 0<|\Omega|<+\infty\quad\text{and}\quad\mathcal{H}^{n-1}(\partial \Omega\setminus\partial^{*}\Omega)=0,
        \end{equation}
 Maz'ya established a sufficient and necessary condition, in a   similar manner to  
 $$\tau(\Omega)\ge 1+\delta_0,$$
 for the open set $\Omega$  supporting $(1,1)$-trace inequality \eqref{11trace} \cite[Theorem 9.6.4]{M2011}; see also Lemma \ref{mazya s and n} below. 

Meanwhile, analogous to \cite[Theorem 9.6.4]{M2011}, Ziemer introduced another sufficient condition for those bounded domains supporting \eqref{11trace}; see \cite[Theorem 5.10.7]{Z1989}. Such domains are called {\it admissible domains}; see Definition \ref{admissible} for detailed information.


For any set $\Omega$ satisfying the conditions \eqref{almost theoretic}, we show that  $\Omega$ can be assumed to be open up to a modification on a set of $\mathcal L^n$-measure zero; see Lemma \ref{open rep} for a detailed explanation. This allows us to study the equivalence between the following three conditions under the assumption \eqref{almost theoretic}. 

\begin{thm}\label{equiva}
    Let $\Omega\subset\mathbb R^n, \, n\ge 2$ be a set of finite perimeter satisfying \eqref{almost theoretic}. Then up to a modification on a set of $\mathcal L^n$-measure zero of $\Omega$, the following statements are equivalent:
    \begin{enumerate}[(i)]
  \item
    The trace constant $\tau(\Omega)\ge 1+\delta_0$ for some constant $\delta_0>0$;

    \item
    $\Omega$ is an open set satisfying \eqref{11trace} with constant $C_T>0$;

     \item 
      $\Omega$ is a $\Theta$-admissible domain.
    \end{enumerate}
  The dependence of the constants is given as follows:  If $\Omega$ satisfies (i), then $\Omega$ meets (ii) with constant $C_T=(\tau(\Omega)-1)^{-1}$; if $\Omega$ satisfies (ii), then $\Omega$ meets (iii) with constant $\Theta=C_T$; if $\Omega$ satisfies (iii), then $\Omega$ meets (i), with constant $\tau(\Omega)$ (or more precisely, the lower bound of $\delta_0$) depending on both $n$ and $\Omega$ itself.  
\end{thm}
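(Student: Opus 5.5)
We prove the cycle (i)$\Rightarrow$(ii)$\Rightarrow$(iii)$\Rightarrow$(i). First we use Lemma \ref{open rep} to replace $\Omega$ by an open representative. This changes $\Omega$ only on an $\mathcal L^n$-null set, so the measure-theoretic quantities $P(\Omega)$, $\partial^*\Omega$ and $\tau(\Omega)$ are unchanged. By \eqref{almost theoretic} we have $\mathcal H^{n-1}(\partial\Omega\setminus\partial^*\Omega)=0$, so integrals over $\partial\Omega$ and over $\partial^*\Omega$ agree. Throughout, the trace $Tu$ is the inner trace $tr_\Omega u$ on $\partial^*\Omega$.

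\emph{(i)$\Rightarrow$(ii).} We follow the coarea argument of \cite[Lemma 3.1]{FMP2010}, or equivalently the sufficiency part of Lemma \ref{mazya s and n}.
\begin{itemize}
\item Take $u\in BV(\Omega)$ and let $c$ be a median of $u$ on $\Omega$, so that $|\{u>c\}|\le|\Omega|/2$ and $|\{u<c\}|\le|\Omega|/2$.
\item For $t>c$ set $F_t=\{u>t\}\subset\Omega$. Then $|F_t|\le|\Omega|/2$, so the definition of $\tau$ gives
$$\mathcal H^{n-1}(\partial^*F_t\cap\partial^*\Omega)\le \tau(\Omega)^{-1}P(F_t).$$
\item Split $P(F_t)=P(F_t;\Omega)+\mathcal H^{n-1}(\partial^*F_t\cap\partial^*\Omega)$. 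Rearranging gives
$$\mathcal H^{n-1}(\partial^*F_t\cap\partial^*\Omega)\le (\tau(\Omega)-1)^{-1}P(F_t;\Omega).$$
\item Integrate in $t$. The layer-cake formula for the trace gives $\int_{\partial^*\Omega}(Tu-c)^+\,d\mathcal H^{n-1}=\int_c^\infty \mathcal H^{n-1}(\partial^*\Omega\cap\partial^*F_t)\,dt$. The coarea formula gives $\int_c^\infty P(F_t;\Omega)\,dt=\|Du\|(\{u>c\})$.
\item The level sets $\{u<t\}$ for $t<c$ are handled in the same way, and adding the two halves yields \eqref{11trace} with $C_T=(\tau(\Omega)-1)^{-1}$.
\end{itemize}
To avoid technicalities with unbounded $u$, we first treat bounded truncations of $u$ and then pass to the limit by monotone convergence.

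\emph{(ii)$\Rightarrow$(iii).} Definition \ref{admissible} presumably asks for an isoperimetric-type bound: for $F\subset\Omega$ with $|F|\le|\Omega|/2$,
$$\mathcal H^{n-1}(\partial^*F\cap\partial^*\Omega)\le\Theta\, P(F;\Omega),$$
possibly with $\min\{\cdot,\cdot\}$ over $F$ and $\Omega\setminus F$.
\begin{itemize}
\item Test \eqref{11trace} with $u=\chi_F$. Then $\|Du\|(\Omega)=P(F;\Omega)$, and $Tu=\chi_{\partial^*F\cap\partial^*\Omega}$ $\mathcal H^{n-1}$-a.e.
\item Since $Tu$ takes only the values $0$ and $1$, for every constant $c$ we have
$$\int_{\partial\Omega}|Tu-c|\,d\mathcal H^{n-1}\ge \min\{\mathcal H^{n-1}(\partial^*\Omega\cap\partial^*F),\ \mathcal H^{n-1}(\partial^*\Omega\setminus\partial^*F)\}.$$
\end{itemize}
This gives exactly the admissibility inequality with $\Theta=C_T$, provided the definition is phrased with this minimum. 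The exact matching must be checked against Definition \ref{admissible}.

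\emph{(iii)$\Rightarrow$(i).} This is the main obstacle. Admissibility controls only the minimum of the boundary pieces of $F$ and of $\Omega\setminus F$, whereas $\tau$ needs control of $\mathcal H^{n-1}(\partial^*F\cap\partial^*\Omega)$ itself when $|F|\le|\Omega|/2$. The argument splits into two cases.
\begin{itemize}
\item \textbf{Case 1:} $\mathcal H^{n-1}(\partial^*F\cap\partial^*\Omega)\le \frac12 P(\Omega)$. Then the minimum is attained by $F$, and admissibility gives $\mathcal H^{n-1}(\partial^*F\cap\partial^*\Omega)\le\Theta P(F;\Omega)$. Hence
$$P(F)\ge(1+\Theta^{-1})\,\mathcal H^{n-1}(\partial^*F\cap\partial^*\Omega).$$
\item \textbf{Case 2:} $F$ carries most of $\partial^*\Omega$. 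Then $\Omega\setminus F$, which has volume at least $|\Omega|/2$, carries little of $\partial^*\Omega$, and we must show that $P(F;\Omega)$ is bounded below by a constant depending on $\Omega$.
\begin{itemize}
\item Use the relative isoperimetric inequality in $\Omega$. This should itself follow from admissibility combined with the Euclidean isoperimetric inequality applied to $\Omega\setminus F$: we have $P(\Omega\setminus F)\ge c(n)|\Omega|^{(n-1)/n}$, and $P(\Omega\setminus F)=P(F;\Omega)+\mathcal H^{n-1}(\partial^*\Omega\setminus\partial^*F)$.
\item Combined with the admissibility bound on the second term, this gives $P(F;\Omega)\ge c(n,\Omega)>0$.
\item Then $\mathcal H^{n-1}(\partial^*F\cap\partial^*\Omega)\le P(\Omega)\le C(n,\Omega)P(F;\Omega)$.
\end{itemize}
\end{itemize}
Combining the two cases gives $\tau(\Omega)\ge1+\delta_0$ with $\delta_0$ depending on $n$, $\Theta$, $|\Omega|$ and $P(\Omega)$, which explains why the constant in the statement depends on $\Omega$ itself. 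The delicate points are making the Case 2 estimate quantitative when $|F|$ is small, and being careful about the normalization in Definition \ref{admissible}.
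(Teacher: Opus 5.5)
Your (i)$\Rightarrow$(ii) is fine and is the paper's argument in substance. Your level-set inequality is exactly Maz'ya's condition \eqref{admis 2 rep} with $C_M=(\tau(\Omega)-1)^{-1}$, and your coarea integration is what Lemma~\ref{mazya s and n}(i) packages.

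The genuine gap is in (iii)$\Rightarrow$(i). It comes from guessing Definition~\ref{admissible} instead of using it. Admissibility is a \emph{local} condition: for each $x\in\partial\Omega$ there is $r(x)>0$ such that $\mathcal H^{n-1}(\partial^*\Omega\cap\partial^*E)\le\Theta\,\mathcal H^{n-1}(\partial^*E\cap\Omega)$ holds only for $E\subset\overline{\Omega}\cap B(x,r(x))$. There is no minimum over $E$ and $\Omega\setminus E$. Your Case~1 applies this inequality to an arbitrary $F\subset\Omega$, which is not permitted.

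What your two cases actually prove is that the \emph{global} condition \eqref{admis 2 rep} gives $\tau(\Omega)\ge 1+\delta_0(n,\Theta,|\Omega|,P(\Omega))$. That is a correct, explicit route to (ii)$\Rightarrow$(i). No such dependence is possible from the local definition, however. Take two unit balls joined by a cylinder of radius $\epsilon$. This set is admissible with $\Theta$ independent of $\epsilon$, because the boundary is uniformly Lipschitz after rescaling and the inequality is scale invariant. Its volume and perimeter stay bounded. Yet taking $F$ to be one ball plus half the tube gives $\tau(\Omega)-1\le C(n)\epsilon^{n-1}$.

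The missing idea is the local-to-global step. Cover the compact set $\partial\Omega$ by finitely many balls $B(x_i,r(x_i))$ and use a partition of unity to obtain $\int_{\partial^*\Omega}|Tu|\,d\mathcal H^{n-1}\le M\|u\|_{BV(\Omega)}$ \cite[Theorem 5.10.7]{Z1989}. Then, for $u=\chi_F$ with $0<|F|\le|\Omega|/2$, absorb the $L^1$ term. Use $|F|\le\|\chi_F-(\chi_F)_\Omega\|_{L^1(\Omega)}$ together with the Poincar\'e inequality on the bounded admissible domain \cite[Theorem 5.11.1]{Z1989}. This is where the dependence of $\delta_0$ on $\Omega$ itself comes from, through $M$ and the Poincar\'e constant.

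For (ii)$\Rightarrow$(iii), testing with $\chi_F$ only recovers the global condition \eqref{admis 2 rep}, which is Lemma~\ref{mazya s and n}(ii). Two steps are missing.

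First, Definition~\ref{admissible} requires a bounded domain. Connectedness follows by testing \eqref{admis 2 rep} with a single component, but boundedness needs a real argument (Lemma~\ref{admis bounded}). For a.e.\ large $r$ one gets $P(\Omega;B_r^c)\le 2C_M\mathcal H^{n-1}(\Omega\cap\partial B_r)$. The isoperimetric inequality on spheres turns this into $-f'\ge c\,f^{(n-2)/(n-1)}$ for $f(r)=\int_r^\infty P_{\partial B_t}(\Omega\cap\partial B_t)\,dt$. Since $f>0$ (because $\mathcal H^{n-1}(\Omega\cap\partial B_r)>0$), this forces $f^{1/(n-1)}$ to decrease at a linear rate on a half-line, which is a contradiction.

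Second, you must localize so that the minimum in \eqref{admis 2 rep} is attained by $E$. Pick $r(x)$ with $\mathcal H^{n-1}(\partial\Omega\cap B(x,2r))<P(\Omega)/3$. Then every $E\subset\Omega\cap B(x,r)$ has $P(E;\Omega^c)<P(\Omega)/3$, while $P(\Omega\setminus E;\Omega^c)\ge P(\Omega;B(x,2r)^c)\ge 2P(\Omega)/3$. So \eqref{admis 2 rep} yields the local inequality with $\Theta=C_T$.
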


\begin{rem}
    In (iii)$\Rightarrow$(i) of Theorem \ref{equiva}, the dependence  of the constant $\tau(\Omega)$ cannot be determined solely by $\Theta$. An example is given as follows. Consider the rectangle
    $$R_l:=(-l,l)\times (-1,1)\subset \mathbb R^2,$$
    which is a $\sqrt{2}$-admissible domain for any $l\gg 1$. However, as $l\to\infty$, $\tau(R_l)\to 1^+$.  
\end{rem}

Let us recall the definition of a John domain.
  \begin{defn}\label{John}
    For   $J\ge 1$, a (bounded) domain $\Omega\subset \mathbb R^n$ is said to be  $J$-John if there exists a distinguished point $x_0\in \Omega$ so that, for any $x\in \Omega$, there exists a curve $\gamma\subset \Omega$ joining $x$ to $x_0$ satisfying the following condition: 
\begin{equation}\label{John curve}
\ell(\gamma[x,\,y])\le J\dist(y,\,\partial \Omega) \quad \text{ for any $y\in \gamma$}, 
\end{equation}
where $\ell(\gamma[x,\,y])$ denotes the length of the subcurve of $\gamma$ joining $x$ to $y$ with respect to the metric induced by the standard Euclidean norm $|\cdot|$.
We usually call $x_0$ the  {John center} of $\Omega$ and $\gamma$ the  {John curve} joining $x_0$ and $x$. 
\end{defn}

Partially motivated by \cite{FMP2010}, the authors presented an alternative proof of \eqref{quan} (for the general anisotropic perimeter)  in a recent manuscript \cite{SZ2024}, by utilizing the John property of (almost) minimal surfaces \cite{DS1998}. Specifically speaking, the authors modified each set $E$ sufficiently close to $\mathbb B^n$ via a variational method, known as selection principle, to obtain a new set $\Omega$ with uniformly bounded John constant $J$ independent of $E$. The set $\Omega$ also satisfies  a uniform volume density estimate and supports the $(1,1)$-trace inequality \eqref{11trace} with the constant only depending on the dimension $n$ and $J$, thus supporting \eqref{quan}; see \cite[Theorem 1.3]{SZ2024} for detailed discussion. 
 
Partially motivated by this, we delve into examining the relationship between an open set that supports the $(1,1)$-trace inequality \eqref{11trace} and its John property.
 Observe that,  Theorem \ref{counexample} already yields that sets satisfying both \eqref{almost theoretic} and supporting the $(1,1)$-trace inequality \eqref{11trace} could have overly complicated geometric properties. 
Nevertheless, when $\Omega$ satisfies some extra mild geometric assumption, such as ball separation property, then one can show that $\Omega$ is a John domain, together with an upper density bound on the perimeter.

Let us firstly recall the definition of the ball separation property.

\begin{defn}\label{ball sep}
    We say that a domain $\Omega\subset \mathbb R^n$ has a ball separation property with respect to a distinguished point $x_0\in \Omega$ if there exists a constant $S>0$ such that  
    for each $z\in \Omega$, one can find a curve $\gamma\subset \Omega$ joining $z$ to $x_0$ so that, for any point $\eta\in \gamma$, either $$\gamma[z,\eta]\subset B(\eta, S\dist(\eta,\partial \Omega))=:B,$$
    or each point $y\in \gamma[z,\eta]\setminus B $ belongs to a different component of $\Omega\setminus \partial B$ than $x_0$. 
\end{defn}

Note that, each finitely connected planar domain has a ball separation property; see  e.g. \cite{BK1995} (and further in \cite[Page 75]{BHK2001} and \cite[Page 191]{KRZ2017}). Now we can state the following result, as a byproduct of the proof of Theorem \ref{equiva}.

\begin{cor}\label{iff}
    Suppose that the open set $\Omega\subset\mathbb R^n$ satisfies 
    \eqref{almost theoretic}.  
    \begin{enumerate}[(i)]
        \item Assume that $\Omega$ satisfies the ball separation property with respect to a distinguished point $x_0\in \Omega$ and the constant $S>0$.  Moreover, for any $u\in BV(\Omega)$, the trace $Tu$ exists on $\partial \Omega$ and \eqref{11trace} holds. Then by letting 
        $$r_0:=\dist (x_0,\Omega^c),$$
        $\Omega$ is a $J$-John domain with John center $x_0$, where $C_T$ is the same constant in \eqref{11trace} and $J=J(n, C_T, S, |\Omega|/r_0^n)$. In addition, $\partial\Omega$ satisfies the upper density estimate
\begin{equation}\label{hau cond 1}
    \mathcal H^{n-1}(\partial \Omega\cap B(x,r))\le C_H r^{n-1}\quad \text{ for any }r\in (0,r_0)\text{ and }x\in \partial\Omega,
\end{equation}
where $C_H=C_H(n, C_T, r_0, P(\Omega))$ is a positive constant.

        \item Conversely, if $\Omega$ is a $J$-John domain with John center $x_0\in\Omega$ and satisfies \eqref{hau cond 1}, then $\Omega$ has a ball separation property with a distinguished point $x_0$ and constant $S>0$, where $S=J$. Furthermore, for each $u\in BV(\Omega)$, the trace $Tu(x)$ exists for $\mathcal H^{n-1}$-a.e. $x\in \partial\Omega$, and there exists a constant $C_T=C_T(n, J, C_H, r_0)>0$ such that \eqref{11trace} holds. 
    \end{enumerate}
\end{cor}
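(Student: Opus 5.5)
Part (i) will go through the equivalence in Theorem \ref{equiva}. By (ii)$\Rightarrow$(iii) there, $\Omega$ is a $\Theta$-admissible domain with $\Theta=C_T$. Concretely, the trace inequality \eqref{11trace} applied to $u=\chi_F$, for $F\subset\Omega$ with $|F|\le|\Omega|/2$, gives a relative isoperimetric inequality of the form
$$\mathcal H^{n-1}(\partial^*F\cap\partial\Omega)\le C_T\,\mathcal H^{n-1}(\partial^*F\cap\Omega).$$
Together with the Euclidean isoperimetric inequality this yields
$$|F|^{(n-1)/n}\le C(n)(1+C_T)\,\mathcal H^{n-1}(\partial^* F\cap \Omega)\quad\text{whenever } |F|\le |\Omega|/2.$$
That is a $(1,n/(n-1))$-type isoperimetric condition on $\Omega$.

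From here I will follow the Buckley--Koskela scheme \cite{BK1995}: a Sobolev--Poincar\'e (equivalently, relative isoperimetric) inequality plus ball separation implies the John property. Fix $z\in\Omega$ and take the ball separation curve $\gamma$ from $z$ to $x_0$. For $\eta\in\gamma$, set $d=\dist(\eta,\partial\Omega)$ and $B=B(\eta,Sd)$. I want to show $\ell(\gamma[z,\eta])\lesssim d$, or at least that $\gamma$ can be replaced by a curve with this property.

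If $\gamma[z,\eta]\not\subset B$, ball separation gives points $y\in\gamma[z,\eta]\setminus B$ lying in a component $U$ of $\Omega\setminus\partial B$ not containing $x_0$. Then $\partial^* U\cap\Omega\subset\partial B$. The first case is $|U|\le|\Omega|/2$. A standard chain-of-balls argument shows the relevant component cannot carry volume much bigger than $d^n$. I will make this precise by working with the sub-level sets $U_t$ obtained by intersecting $\Omega\setminus\partial B(\eta,t)$ for $t\in(Sd,2Sd)$ and applying the isoperimetric inequality with the coarea formula; this gives a differential inequality for $|U_t|$ which forces $|U|\lesssim (Sd)^n$. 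The second case, $|U|>|\Omega|/2$, is excluded when $d$ is small by comparing with $B(x_0,r_0)\subset\Omega\setminus U$, which is where the dependence on $|\Omega|/r_0^n$ enters.

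Volume bounds on the pieces separated off by balls then give the John condition through the usual Whitney-cube counting (\cite[Theorem 5.1]{BK1995}-type argument, adapted to dimension $n$). I expect this step — converting the isoperimetric control of separated components into length control of a modified curve — to be the main obstacle. I will first try to quote the Buckley--Koskela argument essentially verbatim, since it uses only the isoperimetric inequality and ball separation.

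The upper density estimate \eqref{hau cond 1} comes from testing the trace inequality on $F=\Omega\cap B(x,r)$ with $x\in\partial\Omega$ and $r<r_0$. Since $\mathcal H^{n-1}(\partial\Omega\setminus\partial^*\Omega)=0$, we get
$$\mathcal H^{n-1}(\partial\Omega\cap B(x,r))\le C_T\,\mathcal H^{n-1}(\Omega\cap\partial B(x,r))\le C_T\,n\omega_n r^{n-1},$$
provided $|\Omega\cap B(x,r)|\le|\Omega|/2$. For the remaining radii, those with $|\Omega\cap B(x,r)|>|\Omega|/2$, I will use the trivial bound $P(\Omega)\le P(\Omega)(r/r_0')^{n-1}$ for $r$ comparable to $r_0$. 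This is why $C_H$ depends on $P(\Omega)$ and $r_0$.

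For part (ii), the John curve itself witnesses ball separation with $S=J$: for $\eta\in\gamma$, the John condition gives $\ell(\gamma[z,\eta])\le J\dist(\eta,\partial\Omega)$, so $\gamma[z,\eta]\subset B(\eta,J\dist(\eta,\partial\Omega))$ and the first alternative always holds. For the trace inequality, John domains support the relative isoperimetric inequality $\min\{|F|,|\Omega\setminus F|\}^{(n-1)/n}\le C(n,J)P(F;\Omega)$. The upper density \eqref{hau cond 1}, combined with the interior John cones, shows $\Omega$ is admissible in Ziemer's sense (Definition \ref{admissible}). Ziemer's theorem \cite[Theorem 5.10.7]{Z1989}, together with Theorem \ref{equiva} (iii)$\Rightarrow$(ii), then gives the existence of traces $\mathcal H^{n-1}$-a.e. on $\partial\Omega$ and \eqref{11trace}. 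Finally, I will track the constants through the standard boundary-covering argument so that $C_T$ depends only on $n,J,C_H,r_0$.
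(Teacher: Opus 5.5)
\textbf{Part (i).} The John part is essentially the paper's argument. Once you have a $(1^*,1)$-type inequality, Lemma~\ref{Sobolev John} applies directly; the paper gets it from Lemma~\ref{Mazya-extension} and the Sobolev inequality on $\mathbb R^n$, so you need not redo the Buckley--Koskela chain argument.

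However, the inequality you extract from \eqref{11trace} is not what \eqref{11trace} gives. Because of the infimum over $c$, testing with $u=\chi_F$ yields only Maz'ya's symmetric condition $\min\{P(F;\Omega^c),P(\Omega\setminus F;\Omega^c)\}\le C_T P(F;\Omega)$ (Lemma~\ref{mazya s and n}(ii)). The hypothesis $|F|\le |\Omega|/2$ says nothing about which term realizes the minimum. The one-sided bound $P(F;\Omega^c)\le C_TP(F;\Omega)$ for $|F|\le|\Omega|/2$ is exactly $\tau(\Omega)\ge 1+C_T^{-1}$, i.e.\ condition (i) of Theorem~\ref{equiva}. The paper obtains (i) from (ii) only through (iii), with $\Omega$-dependent constants.

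For the John part this slip is harmless. Applying the isoperimetric inequality to whichever of $F$, $\Omega\setminus F$ realizes the minimum still gives $|F|^{(n-1)/n}\le C(n)(1+C_T)P(F;\Omega)$.

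For \eqref{hau cond 1}, though, ruling out the alternative $P(\Omega\setminus F;\Omega^c)\le C_TP(F;\Omega)$ for $F=\Omega\cap B(x,r)$ is precisely the difficulty, and your first case skips it. The paper handles it in two steps. It shows $\mathcal H^{n-1}(I_{x,r})\ge\frac12 n\omega_n r_0^{n-1}$ for the zero set $I_{x,r}$ of $T\chi_{\Omega\cap B(x,r)}$, by reflecting across a hyperplane through $x_0$ that separates $x_0$ from $\overline B(x,r)$, so the reflected set contains $B(x_0,r_0)$. It then removes the infimum over $c$ by averaging over $I_{x,r}$.

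Your route can be repaired more cheaply:
\begin{enumerate}
\item In the bad alternative, $P(\Omega\setminus F)=P(F;\Omega)+P(\Omega\setminus F;\Omega^c)\le(1+C_T)n\omega_n r^{n-1}$.
\item On the other hand, $|\Omega\setminus F|\ge\frac12\omega_n r_0^n$, because $B(x,r)$ misses the half of $B(x_0,r_0)$ on the far side of that hyperplane.
\item The isoperimetric inequality then forces $r\ge 2^{-1/n}(1+C_T)^{-1/(n-1)}r_0$, and the trivial bound $\mathcal H^{n-1}(\partial\Omega\cap B(x,r))\le P(\Omega)$ finishes.
\end{enumerate}
This gives $C_H=\max\{n\omega_nC_T,\,2(1+C_T)P(\Omega)r_0^{1-n}\}$, and your volume split becomes unnecessary.

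\textbf{Part (ii).} Ball separation with $S=J$ is exactly as in the paper. For the trace inequality your route genuinely differs. The paper quotes the quantitative $W^{1,1}$ trace inequality for $J$-John domains satisfying \eqref{hau cond 1} from \cite[Theorem 1.3]{SZ2024}. It then passes to $BV$ via \cite[Theorem 3.4]{LLW21}, which gives $v\in W^{1,1}(\Omega)$ with $\|Dv\|(\Omega)\le 2\|Du\|(\Omega)$ and $u-v$ having vanishing averages near $\partial\Omega$, so $Tu=Tv$. Your route through Ziemer admissibility has the merit of producing $BV$ traces directly.

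As written, though, the substantive step is only asserted. Deriving $\mathcal H^{n-1}(\partial^*\Omega\cap\partial^*E)\le\Theta\,\mathcal H^{n-1}(\partial^*E\cap\Omega)$ for $E\subset\overline\Omega\cap B(x,r)$ needs an actual covering argument, with three ingredients:
\begin{itemize}
\item the measure density of John domains, to find a scale $\rho_z$ at which $E$ fills half of $\Omega\cap B(z,\rho_z)$;
\item a localized relative isoperimetric inequality coming from the John condition;
\item \eqref{hau cond 1} at scales below $r_0$.
\end{itemize}
Moreover, Theorem~\ref{equiva} (iii)$\Rightarrow$(ii) cannot be used as a black box here. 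Its constants depend on $\Omega$ through Ziemer's $M(\Omega)$ and the Poincar\'e constant $C''(\Omega,n)$. To reach $C_T=C_T(n,J,C_H,r_0)$ you must first obtain admissibility with a uniform radius $r(x)\gtrsim r_0$ and a constant $\Theta(n,J,C_H)$. You must then rerun Ziemer's cutoff-and-cover argument over the $N(n,J)$ balls of radius $\sim r_0$ covering $\partial\Omega$, combined with the John--Poincar\'e inequality. That is doable, but it is the whole content of the trace theorem you are proving.
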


\begin{rem}
    The ball separation property is necessary for the equivalence of John property and trace inequality. In fact, the example, constructed in Chapter 3 to prove Theorem \ref{counexample}, is precisely a domain that supports the $(1,1)$-trace inequality yet fails to have the ball separation property; see Lemma \ref{ch3 fin per} and Proposition~\ref{tau value} together with Theorem~\ref{equiva}.
\end{rem}

Our paper is structured as follows: In Section \ref{prelim}, we recall some elementary  results, especially the definition and  properties of admissible domains. 
Subsequently, in Section 3, we construct the set $\Omega$ stated in Theorem \ref{counexample} and then verify \eqref{almost theoretic} for $\Omega$  in Lemma \ref{ch3 fin per}. Furthermore, we prove in Proposition \ref{tau value} that $\tau(\Omega)$ is sufficiently close to $\tau(\mathbb B^n)$, thereby concluding Theorem \ref{counexample}. Finally, in Section 4,  we show that each open set satisfying \eqref{almost theoretic} and the $(1,1)$-trace inequality \eqref{11trace} necessarily have connectivity and boundedness, thus obtaining the equivalence of three types of sets referred to in Theorem \ref{equiva}. In addition, Corollary \ref{iff} follows as a consequence of (the proof of) Theorem \ref{equiva}.

\section{Preliminary}\label{prelim}

\subsection{Notations and some elementary lemmas.}

To begin with, we establish the notation as follows:
For a (rectifiable) curve $\gamma\subset \mathbb R^n$, we denote the length of $\gamma$ by $\ell(\gamma)$. For any pair of points $x,y \in \gamma$, we denote a subcurve of $\gamma$ joining $x$ to $y$ by $\gamma[x,y]$.

For any set $E\subset\mathbb{R}^n$, the closure of $E$ with respect to the Euclidean topology is denoted as $\overline{E}$ or $cl(E)$ and its complement is denoted as $E^c:=\mathbb R^n\setminus E$ for brevity. The interior of $E$ is denoted as $\inter(E)$. The Lebesgue measure of $E$ is denoted by $|E|$ and the $s$-dimensional Hausdorff measure of $E$ is denoted by $\mathcal H^s(E)$. A general constant is denoted by $C$, which may vary between different estimates, and we include all the constants it depends on within the parentheses, denoted as $C(\cdot)$.

In addition, the standard Euclidean norm is denoted by $|\cdot| $. With this notation, for any set $E\subset\mathbb R^n$, we write 
$$\dist(x,E):=\inf_{y\in E}|x-y|.$$
We use 
$$B(x,r):=\{z\in\mathbb R^n:|z-x|<r\}$$
to represent the ball with center $x$ and radius $r$. The closure of $B(x,r)$ is denoted by $\overline{B}(x,r)$. For brevity, we write $B_r=B(0,r)$. In particular, we denote by $\mathbb B^n$ the unit ball $B_1\subset\mathbb R^n$. Its boundary and its Lebesgue measure are denoted by $\mathbb S^{n-1}$ and $\omega_n$, respectively.

For any $\mathcal L^n$-measurable set $E\subset\mathbb R^n$ and any function $u$ integrable on $E$, the integral average  of $u$ over $E$ is denoted by 
$$u_E:=\bint_E u(x)\,dx=\frac{1}{|E|}\int_Eu(x)\,dx.$$

A function $f\in L^1(U)$ has bounded variation on the open set $U\subset \mathbb R^n$ if its total variation is finite, i.e., $\|Df\|(U)<+\infty$. We denote the space of functions of bounded variation in $U$ by $BV(U)$. This space is equipped with the norm
$$\|f\|_{BV(U)}:=\|f\|_{L^1(U)}+\|Df\|(U).$$

For any measurable set $E\subseteq \R^n$, the distributional gradient $D\chi_E$ of its characteristic function $\chi_E$ induces the relative
perimeter of $E$ with respect to the measurable set  $V\subset\mathbb R^n$, which is defined as 
$$P(E;V)=\|D\chi_E\|(V).$$
In particular, when $V=\mathbb R^n$, $P(E;\mathbb R^n)$ is the so-called the \emph{perimeter} of $E$, denoted by $P(E)$ for simplicity.

A measurable set $E\subset \mathbb R^n$
is said to be of \emph{finite perimeter} provided
that  $D\mathbf{\chi}_E$ is a Radon measure with finite total variation, i.e., $\|D\mathbf{\chi}_E\|(\R^n)<\infty$.
According to the Lebesgue-Besicovitch differentiation theorem for measures (cf. \cite[Theorem 1.34]{EG92}), for $\|D\mathbf{\chi}_E\|$-a.e. $x$, it holds that
$$
\lim_{r\to 0^+} - \frac{D\mathbf{\chi}_E(x+rB^n)}{\|D\mathbf{\chi}_E\|(x+rB^n)} = \nu_E(x) \quad \text{and} \quad |\nu_E(x)| = 1.
$$
The set of points $x$ where this condition holds is called the \emph{reduced boundary} of $E$ and is denoted by $\partial^* E $.
 At points on the reduced boundary, $\nu_E(x)$ represents the \emph{measure-theoretic outward unit normal} to $E$ at $x$.
Also, {up to modifying $E$ in a set of Lebesgue measure zero}, one can assume that $\overline{\partial^* E}=\partial E$ \cite[Proposition 12.19 \& (15.3)]{M2012}.
Additionally, $\|D\mathbf{\chi}_E\|$ coincides with $\mathcal{H}^{n-1}\llcorner \partial^*E$, the Hausdorff measure $\mathcal{H}^{n-1}$ restricted on $\partial^*E$. 

For any sets $E$ of finite perimeter with $|E|<\infty$, the isoperimetric inequality 
\begin{equation}\label{iso 2}
 P(E)\ge n\omega_n^{1/n}|E|^{(n-1)/n} 
\end{equation}
 holds. Analogously, the relative isoperimetric inequality states that  for each ball $B(x,r)\subset \mathbb R^n$,
\begin{equation}\label{rela iso}
    P(E;B(x,r))\ge C_{re}\min\{|B(x,r)\cap E|,|B(x,r)\setminus E|\}^{1-\frac{1}{n}},
\end{equation}
 where $C_{re}=C_{re}(n)>0$; see \cite[Theorem 5.11]{EG92} for these two inequalities. For more details on sets of finite perimeter, we refer the interested reader to  \cite[Sections 12 and 15]{M2012}. Besides, for  any Borel set $E\subset \mathbb R^n$, the Fleming-Rishel Coarea Formula tells that 
\begin{equation}\label{fr coarea}
    \|Df\|(E)=
    \int_{\mathbb R}P(\{f>t\};E)\,dt\quad \text{for any }f\in BV(\mathbb R^n);
\end{equation}
see \cite[(2.22)]{FMP2010}.

 For a Borel set $E\subset \mathbb R^n$ and 
$\lambda\in [0,1]$, we denote by $E^{(\lambda)}$ the set 
\begin{equation}\label{density set}
    E^{(\lambda)}:=\left\{x\in\mathbb R^n: \lim_{r\to 0}\frac{|E\cap B(x,r)|}{|B(x,r)|}=\lambda\right\}.
\end{equation}
Then the \emph{measure-theoretic boundary} $\partial_M E$ of $E$ is the set defined as $\partial_M E:=\mathbb R^n\setminus (E^{(0)}\cup E^{(1)})$. 
Federer's theorem characterizes the relation among $\partial^* E,\,\partial_M E$ and $E^{(1/2)}$. 
\begin{lem}[Federer's theorem]\label{Federer}
    If $E\subset\mathbb R^n$ is a set of locally finite perimeter, then $$\partial^*E\subset E^{(1/2)}\subset\partial_M E\quad \text{and}\quad 
    \mathcal{H}^{n-1}(\partial_M E\setminus\partial^*E)=0.$$
\end{lem}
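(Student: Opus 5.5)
Federer's theorem is classical (see \cite[Theorem 16.2]{M2012} or \cite[Section 5.8]{EG92}), so we only indicate how we would prove it in three steps.

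\textbf{Step 1: $\partial^*E\subset E^{(1/2)}$.} This follows from De Giorgi's blow-up theorem. Fix $x\in\partial^*E$ and set $E_{x,r}:=(E-x)/r$. As $r\to0^+$, $\chi_{E_{x,r}}\to\chi_{H}$ in $L^1_{\rm loc}(\mathbb R^n)$, where $H=\{y:\ y\cdot\nu_E(x)<0\}$ is the half-space determined by the measure-theoretic outward normal. To obtain this we would:
\begin{enumerate}[(a)]
\item use the relative isoperimetric inequality \eqref{rela iso} to get two-sided density bounds at $x$;
\item use these bounds for compactness of $\{E_{x,r}\}$ in $L^1_{\rm loc}$;
\item show, via the definition of $\nu_E(x)$ as the limit of $-D\chi_E(x+rB^n)/\|D\chi_E\|(x+rB^n)$, that any limit has constant normal $\nu_E(x)$ and is therefore the half-space $H$.
\end{enumerate}
Testing the $L^1_{\rm loc}$ convergence on the unit ball gives $|E\cap B(x,r)|/|B(x,r)|\to|H\cap B_1|/|B_1|=1/2$.

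\textbf{Step 2: $E^{(1/2)}\subset\partial_M E$.} This is immediate: density $1/2$ is neither $0$ nor $1$.

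\textbf{Step 3: $\mathcal H^{n-1}(\partial_M E\setminus\partial^*E)=0$.} This is the main obstacle. Take $x\in\partial_M E$, so $x$ lies in neither $E^{(0)}$ nor $E^{(1)}$. Then
$$\limsup_{r\to0}\frac{\min\{|E\cap B(x,r)|,\,|B(x,r)\setminus E|\}}{r^n}>0.$$
We would like the stronger statement $\liminf_{r\to0}>0$. We would obtain it from continuity of $r\mapsto|E\cap B(x,r)|/|B(x,r)|$: if the upper density is positive and the lower density is $0$, an intermediate-value argument produces radii at which the density stays bounded away from $0$ and $1$. Plugging such radii into \eqref{rela iso} gives
$$\limsup_{r\to0}\frac{\|D\chi_E\|(B(x,r))}{r^{n-1}}>0.$$
Hence $\partial_M E$ is contained in the set of points where the upper $(n-1)$-density of $\|D\chi_E\|$ is positive.

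We then split this set in two. Since $\|D\chi_E\|=\mathcal H^{n-1}\llcorner\partial^*E$, we have $\|D\chi_E\|(\mathbb R^n\setminus\partial^*E)=0$. The standard density comparison theorem (\cite[Section 2.4]{EG92}) then says: if $\mu(A)=0$ and $\limsup_{r\to0}\mu(B(x,r))/r^{n-1}\ge t>0$ on $A$, then $\mathcal H^{n-1}(A)=0$. Applying this to $\mu=\|D\chi_E\|$ and $A=\partial_M E\setminus\partial^*E$, decomposed into countably many pieces indexed by $t=1/k$, gives the claim.

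The delicate point is the intermediate-radius argument in Step 3. We would handle it by working directly with the sequence of radii witnessing the positive limsup, which is enough for the density comparison theorem, since that theorem only requires a positive limsup.
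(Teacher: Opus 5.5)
Your overall route (De Giorgi blow-up for $\partial^*E\subset E^{(1/2)}$; relative isoperimetric inequality plus the density comparison theorem for $\mathcal H^{n-1}(\partial_ME\setminus\partial^*E)=0$) is the standard textbook argument. The paper records this lemma without proof and relies on exactly this argument via Maggi's book. However, two steps of your sketch are wrong as written.

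\textbf{Step 1(a)--(b).} The relative isoperimetric inequality \eqref{rela iso} bounds perimeter \emph{from below} by volume, so it cannot give lower volume-density bounds at $x\in\partial^*E$. Volume bounds also give no $L^1_{\rm loc}$-compactness of the blow-ups $E_{x,r}$; for that you need $P(E_{x,r};B_R)\le CR^{n-1}$. The missing ingredient is the cancellation estimate built into the definition of $\partial^*E$. Since $D\chi_{E\cap B(x,r)}(\mathbb R^n)=0$ (compact support), splitting this measure into its parts inside $B(x,r)$ and on $\partial B(x,r)$ gives $|D\chi_E(B(x,r))|\le\mathcal H^{n-1}(E\cap\partial B(x,r))$ for a.e.\ $r$. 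On the other hand, $x\in\partial^*E$ gives $|D\chi_E(B(x,r))|\ge\frac12\|D\chi_E\|(B(x,r))$ for small $r$. Hence $P(E;B(x,r))\le 2\mathcal H^{n-1}(E\cap\partial B(x,r))\le 2n\omega_n r^{n-1}$, which is the compactness bound. Moreover $P(E\cap B(x,r))\le 3\mathcal H^{n-1}(E\cap\partial B(x,r))=3\frac{d}{dr}|E\cap B(x,r)|$ for a.e.\ small $r$. The global isoperimetric inequality \eqref{iso 2} turns this into an ODE inequality giving $|E\cap B(x,r)|\ge c(n)r^n$, and likewise for $\mathbb R^n\setminus E$. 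These two-sided bounds are also what (c) needs to conclude that the limit half-space passes through the origin, rather than being a translate of $H$, or $\emptyset$, or $\mathbb R^n$.

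\textbf{Step 3.} The ``stronger statement'' $\liminf_{r\to0}\min\{|E\cap B(x,r)|,|B(x,r)\setminus E|\}/r^n>0$ is false in general. For instance, $E=\bigcup_k\{y:\ r_k/2<|y-x|<r_k,\ (y-x)_1>0\}$ with $r_{k+1}/r_k\to0$ has finite perimeter and $x\in\partial_ME$, yet $|E\cap B(x,r_k/2)|/r_k^n\to0$. Conversely, your displayed $\limsup$ is not immediate from $x\notin E^{(0)}\cup E^{(1)}$. That only gives $\limsup f>0$ and $\liminf f<1$ for $f(r)=|E\cap B(x,r)|/|B(x,r)|$, possibly along different radii. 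The intermediate-value argument belongs exactly there. Since $f$ is continuous on $(0,\infty)$, its cluster set at $0$ is a closed interval different from $\{0\}$ and $\{1\}$. It therefore contains some $c\in(0,1)$, so there are $r_j\to0$ with $f(r_j)\to c$. With this reordering, \eqref{rela iso} along $r_j$ and the density comparison theorem applied to $\mu=\|D\chi_E\|$ (which vanishes off $\partial^*E$) finish Step 3 correctly, as your last paragraph intends.
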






In addition, we record the following lemma regarding set operations; see \cite[Theorem 16.3]{M2012} and \cite[Exercise 16.6]{M2012}.
\begin{lem}\label{Maggi}
    Suppose that $E$ and $F$ are sets of finite perimeter in $\mathbb R^n$. For brevity, we let 
    $$\{\nu_E=\nu_F\}:=\{ x\in \partial^* E\cap \partial ^*F: \nu_E=\nu_F\}\quad \text{and}\quad\{\nu_E=-\nu_F\}:=\{ x\in \partial^* E\cap \partial ^*F: \nu_E=-\nu_F\}.$$
    Then $E\cap F$, $E\setminus F$ and $E\cup F$ are sets of finite perimeter, satisfying that for every Borel set $G\subset\mathbb R^n$, 
    $$P(E\cap F; G)=P(E; F^{(1)}\cap G)+P(F; E^{(1)}\cap G)+\mathcal{H}^{n-1}(\{\nu_E=\nu_F\}\cap G),$$
    $$P(E\setminus F; G)=P(E; F^{(0)}\cap G)+P(F; E^{(1)}\cap G)+\mathcal{H}^{n-1}(\{\nu_E=-\nu_F\}\cap G),$$
    $$P(E\cup F; G)=P(E; F^{(0)}\cap G)+P(F; E^{(0)}\cap G)+\mathcal{H}^{n-1}(\{\nu_E=\nu_F\}\cap G).$$
Moreover, 
$$\partial^* (E\cap F)= (\partial^*E \cap F^{(1)})\cup (\partial^*F \cap E^{(1)})\cup \{\nu_E=\nu_F\}. $$
In particular, if $E\subset F$, then $\mathcal{H}^{n-1}(\partial^*F\cap \partial^*E)=\mathcal{H}^{n-1}(\{\nu_E=\nu_F\}).$
\end{lem}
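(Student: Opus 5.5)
The plan is to reduce everything to a pointwise blow-up analysis at $\mathcal H^{n-1}$-a.e.\ point, combined with Federer's theorem (Lemma \ref{Federer}) and De Giorgi's structure theorem, so that $P(\cdot;G)=\mathcal H^{n-1}(\partial^*(\cdot)\cap G)$. First, finite perimeter of $E\cap F$, $E\setminus F$ and $E\cup F$ follows by approximating $\chi_E,\chi_F$ in $L^1$ by smooth $u_k,v_k$ with $\int|\nabla u_k|\to P(E)$ and $\int|\nabla v_k|\to P(F)$. We then use $u_kv_k\to\chi_{E\cap F}$ and lower semicontinuity of total variation, which gives $P(E\cap F)\le P(E)+P(F)$. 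The other two operations are obtained by complementation: $E\setminus F=E\cap F^c$ and $E\cup F=(E^c\cap F^c)^c$, using $P(F^c)=P(F)$ and $\nu_{F^c}=-\nu_F$ on $\partial^*F$. It therefore suffices to prove the identity for $E\cap F$, together with the description of $\partial^*(E\cap F)$.

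The key step is to partition $\mathbb R^n$, up to an $\mathcal H^{n-1}$-null set, by the densities of $E$ and $F$. By Federer's theorem, $\mathcal H^{n-1}$-a.e.\ point lies in $E^{(0)}\cup E^{(1)}\cup\partial^*E$, and likewise for $F$. At each point we compute the blow-up of $E\cap F$:
\begin{itemize}
\item If $x\in E^{(0)}$ or $x\in F^{(0)}$, then $x\in (E\cap F)^{(0)}$, so $x$ is not in $\partial^*(E\cap F)$.
\item If $x\in E^{(1)}\cap\partial^*F$, the blow-ups of $E\cap F$ converge in $L^1_{loc}$ to the half-space $\{y\cdot\nu_F(x)<0\}$. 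The normalized measure $D\chi_{E\cap F}$ is then asymptotically $D\chi_F$, so $x\in\partial^*(E\cap F)$ with $\nu_{E\cap F}=\nu_F$. The case $x\in F^{(1)}\cap\partial^*E$ is symmetric.
\item If $x\in\partial^*E\cap\partial^*F$, the blow-ups converge to the intersection of two half-spaces with normals $\nu_E(x)$ and $\nu_F(x)$. When $\nu_E=\nu_F$ this is a half-space, and $x\in\partial^*(E\cap F)$. When $\nu_E\neq\nu_F$, the limit is a wedge of density different from $1/2$ (or empty, when $\nu_E=-\nu_F$). Hence $x\notin(E\cap F)^{(1/2)}\supset\partial^*(E\cap F)$.
\end{itemize}
This gives the claimed description of $\partial^*(E\cap F)$ up to $\mathcal H^{n-1}$-null sets. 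Exact equality requires care at exceptional points; otherwise it should be read modulo $\mathcal H^{n-1}$-null sets, as in \cite{M2012}. Intersecting with $G$ and taking $\mathcal H^{n-1}$ of the three disjoint pieces yields the formula for $P(E\cap F;G)$.

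The final claim follows directly. If $E\subset F$, then on $\partial^*E\cap\partial^*F$ the wedge case $\nu_E\ne\nu_F$ would force the blow-up of $E$ to be contained in that of $F$. Two half-spaces through the origin are nested only if they coincide, so $\nu_E=\nu_F$ $\mathcal H^{n-1}$-a.e.\ there.

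The main obstacle is making the blow-up arguments rigorous, namely that $L^1_{loc}$ convergence of rescalings, together with convergence of $\|D\chi\|$ on balls, identifies membership in the reduced boundary. I would handle this via the standard De Giorgi compactness for blow-ups at reduced-boundary points. The density statement in the wedge case can instead be argued directly from Lebesgue densities, which is all that $\partial^*\subset(\cdot)^{(1/2)}$ needs.
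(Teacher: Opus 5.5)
Your proposal is correct and is essentially the standard argument behind \cite[Theorem 16.3]{M2012}, which the paper cites rather than reproves: finite perimeter via smooth approximation, Federer's decomposition into density classes, blow-ups to decide which classes meet $(E\cap F)^{(1/2)}$, and complementation for $E\setminus F$ and $E\cup F$. Make two points explicit. First, the class $E^{(1)}\cap F^{(1)}$ must also be excluded, since it lies in $(E\cap F)^{(1)}$. Second, on $F^{(1)}\cap\partial^*E$, $E^{(1)}\cap\partial^*F$ and $\{\nu_E=\nu_F\}$ the blow-up only gives $x\in(E\cap F)^{(1/2)}$, so membership in $\partial^*(E\cap F)$ follows $\mathcal H^{n-1}$-a.e.\ from Lemma~\ref{Federer} applied to $E\cap F$; it is not pointwise and needs no convergence of $\|D\chi_{E\cap F}\|$ on balls, which can in fact fail at individual points. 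This is exactly why the set identity holds only modulo $\mathcal H^{n-1}$-null sets, as you suspected.
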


\subsection{Admissible domains.}

In \cite[Theorem 9.6.4]{M2011}, Maz'ya introduces a sufficient and necessary condition for an open set to support the trace inequality. {We state it as the following lemma. }
\begin{lem}\label{mazya s and n}
    Let $\Omega\subset\mathbb R^n$ be an open set satisfying \eqref{almost theoretic}.  
    \begin{enumerate}
  \item[(i)]  
If for any $\mathcal{L}^n$-measurable set $F\subset \Omega$, 
\begin{equation}\label{admis 2 rep}
    \min\{P(F;\Omega^c), P(\Omega\setminus F;\Omega^c)\}\le C_M P(F;\Omega),
\end{equation}
where $C_M$ is a positive constant independent of $F$, then for any $u\in BV(\Omega)$, the trace $Tu$ exists. Moreover, \eqref{11trace} holds by replacing $C_T$ in \eqref{11trace} with  $C_M$.

 \item[(ii)]
If, for any $u\in BV(\Omega)$, the trace $Tu$ exists $\mathcal H^{n-1}$-a.e. on $\partial\Omega$ and \eqref{11trace} holds for some constant $C_T>0$, then for any $\mathcal{L}^{n}$-measurable set $F\subset\Omega$, \eqref{admis 2 rep} holds with $C_M=C_T$.
 \end{enumerate}
\end{lem}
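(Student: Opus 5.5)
The statement is Maz'ya's criterion, so the plan is to go through the coarea formula and level sets. For an open $\Omega$ satisfying \eqref{almost theoretic}, I would first fix the meaning of the trace. For $u\in BV(\Omega)$, extend $u$ by zero outside $\Omega$ and set $Tu(x)=\lim_{r\to0}\bint_{B(x,r)\cap\Omega}u$ at $\mathcal H^{n-1}$-a.e. $x\in\partial^*\Omega$. Since $\mathcal H^{n-1}(\partial\Omega\setminus\partial^*\Omega)=0$, this defines $Tu$ a.e. on $\partial\Omega$.

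\textbf{Step (i).} The key identity is that, for $u\ge 0$ bounded, the zero extension $\bar u=u\chi_\Omega$ satisfies
$$\|D\bar u\|(\Omega^c)=\int_{\partial\Omega}|Tu|\,d\mathcal H^{n-1}.$$
On the other hand, the coarea formula \eqref{fr coarea} applied on the Borel set $\Omega^c$ gives
$$\|D\bar u\|(\Omega^c)=\int_0^\infty P(\{u>t\};\Omega^c)\,dt,\qquad \|Du\|(\Omega)=\int_0^\infty P(\{u>t\};\Omega)\,dt.$$
Now fix a median $m$ of $u$, choosing $m$ so that for $t>m$ the set $F_t=\{u>t\}$ has the ``smaller'' boundary part in \eqref{admis 2 rep}. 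Hypothesis \eqref{admis 2 rep} does not tell us which term of the minimum is realized. To handle this, I would work separately with $(u-c)^+$ and $(u-c)^-$, choosing $c$ as the supremum of those $t$ for which the minimum is attained by $P(\Omega\setminus F_t;\Omega^c)$. This set of $t$ is monotone, because $P(F_t;\Omega^c)=\mathcal H^{n-1}(\partial^*F_t\cap\partial^*\Omega)$ is decreasing in $t$ and $P(\Omega\setminus F_t;\Omega^c)$ is increasing in $t$.

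With this $c$, I would estimate
$$\int_{\partial\Omega}(Tu-c)^+\,d\mathcal H^{n-1}=\int_c^\infty P(F_t;\Omega^c)\,dt\le C_M\int_c^\infty P(F_t;\Omega)\,dt,$$
and the symmetric bound for $(Tu-c)^-$. Adding the two gives \eqref{11trace} with $C_T=C_M$. Finally, I would pass from bounded to general $u$ by truncation, and handle the existence of the trace by approximation, using the same inequality as a Cauchy estimate.

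\textbf{Step (ii).} Given a measurable $F\subset\Omega$ with $P(F;\Omega)<\infty$ (otherwise there is nothing to prove), I would take $u=\chi_F\in BV(\Omega)$. By Lemma~\ref{Federer} and Lemma~\ref{Maggi}, $T\chi_F=\chi_{\partial^*F\cap\partial^*\Omega}$ holds $\mathcal H^{n-1}$-a.e. on $\partial\Omega$, and similarly $T\chi_{\Omega\setminus F}=\chi_{\partial^*(\Omega\setminus F)\cap\partial^*\Omega}$. Writing $a=P(F;\Omega^c)$ and $b=P(\Omega\setminus F;\Omega^c)$, these two sets partition $\partial^*\Omega$ up to a null set, so $a+b=P(\Omega)$. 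Then
$$\int_{\partial\Omega}|Tu-c|\,d\mathcal H^{n-1}=|1-c|\,a+|c|\,b\ge\min\{a,b\},$$
since the left side is piecewise linear in $c$ and its minimum is attained at $c\in\{0,1\}$. Combined with \eqref{11trace}, this yields $\min\{a,b\}\le C_T\|D\chi_F\|(\Omega)=C_T P(F;\Omega)$.

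\textbf{Main obstacle.} The delicate step is the identity $\|D\bar u\|(\Omega^c)=\int_{\partial\Omega}|Tu|$ together with the existence of the trace in (i). This needs the structure $\mathcal H^{n-1}(\partial\Omega\setminus\partial^*\Omega)=0$, and I would verify it first for characteristic functions of sets $F\subset\Omega$ via Lemma~\ref{Maggi}, then extend it through the coarea layer-cake decomposition.
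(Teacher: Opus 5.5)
The paper gives no proof of this lemma; it is quoted from Maz'ya \cite[Theorem 9.6.4]{M2011}. Your argument is the standard level-set proof of that theorem. It uses the coarea formula, the identification $P(F_t;\Omega^c)=\mathcal H^{n-1}(\{Tu>t\})$, and a split at the crossing level $c$, which is just a median of $Tu$ with respect to $\mathcal H^{n-1}\llcorner\partial\Omega$. Your part (ii), via $u=\chi_F$ and $|1-c|\,a+|c|\,b\ge\min\{a,b\}$, is exactly right. You use two auxiliary facts without proof, and the paper imports both from Maz'ya elsewhere:
\begin{itemize}
\item $P(F)\le P(F;\Omega)+\mathcal H^{n-1}(\partial\Omega)$ for $F\subset\Omega$. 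You need this so that $\partial^*F$ makes sense, and so that the zero extension $\bar u$ of a bounded $u$ lies in $BV(\mathbb R^n)$.
\item $T\chi_F=\chi_{\partial^*F}$ holds $\mathcal H^{n-1}$-a.e.\ on $\partial^*\Omega$.
\end{itemize}

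The one step your plan does not justify is the passage to unbounded $u$. For truncations $u_N$ you only get $\int_{\partial\Omega}|Tu_N-c_N|\le C_M\|Du\|(\Omega)$, where $c_N$ is $c$ clamped to $[-N,N]$. Your ``Cauchy estimate'' likewise controls $Tu_N-Tu_M$ only modulo constants. Neither gives a uniform $L^1(\partial\Omega)$ bound unless you know $c\in\mathbb R$, and that needs an argument.
\begin{itemize}
\item Suppose $c=+\infty$. Then at a.e.\ level the minimum in \eqref{admis 2 rep} is $P(\Omega\setminus F_t;\Omega^c)=P(\Omega)-P(F_t;\Omega^c)$. Hence $P(\Omega\setminus F_t)=P(F_t;\Omega)+P(\Omega\setminus F_t;\Omega^c)\le(1+C_M)P(F_t;\Omega)$.
\item Since $\int_{\mathbb R}P(F_t;\Omega)\,dt=\|Du\|(\Omega)<\infty$, you can choose $t_k\to+\infty$ with $P(F_{t_k};\Omega)\to0$.
\item Then $\Omega\setminus F_{t_k}\to\Omega$ in $L^1$, and lower semicontinuity forces $P(\Omega)=0$. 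This contradicts $0<|\Omega|<\infty$.
\item The case $c=-\infty$ is symmetric, using $P(F_{t_k})\le(1+C_M)P(F_{t_k};\Omega)$ with $t_k\to-\infty$.
\end{itemize}
Once $c$ is finite, your level-set bounds give $\|D\bar u\|(\mathbb R^n)\le(1+C_M)\|Du\|(\Omega)+|c|\,P(\Omega)$. So $\bar u\in BV(\mathbb R^n)$, and the trace exists $\mathcal H^{n-1}$-a.e.\ on $\partial^*\Omega$ as a one-sided approximate limit. Your bounded-case computation then applies without truncation.
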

By utilizing \eqref{admis 2 rep}, one can show that $\Omega$ is bounded and connected; see Lemma \ref{admis bounded} for a detailed discussion.
 
On the other hand, regarding the condition \eqref{admis 2 rep}, Ziemer introduces an alternative definition, which is named as \emph{admissible domain}; see \cite[Definition 5.10.1]{Z1989}. 
    
\begin{defn}\label{admissible}
    A bounded domain $\Omega\subset\mathbb R^n$ with finite perimeter is said to be a $\Theta$-admissible domain provided
      $\mathcal{H}^{n-1}(\partial \Omega\setminus\partial^{*}\Omega)=0$
        and that there exists a positive constant $\Theta>0$ such that, for each $x\in\partial\Omega$, there exists $r=r(x)>0$ such that 
         \begin{equation}\label{admissible 2}
         \mathcal H^{n-1}(\partial^{*} \Omega\cap \partial^* E)\le \Theta \mathcal H^{n-1}(\partial^* E\cap \Omega),
         \end{equation}
         whenever $E\subset\overline{\Omega}\cap B(x,r)$  is a set of finite perimeter.
        
\end{defn}
\begin{rem}\label{theo reduce rep}
    It is noteworthy that in \cite[Definition 5.10.1]{Z1989}, \eqref{admissible 2} is formulated in terms of measure-theoretic boundary. Specifically, the reduced boundary $\partial^*\Omega$ and $\partial^*E$ in \eqref{admissible 2} are replaced by $\partial_M\Omega$ and $\partial_M E$, respectively, as in \cite[(5.10.1)]{Z1989}. Moreover, in \cite[(5.10.1)]{Z1989}, \eqref{admissible 2} is also required for all measurable subsets $E\subset\overline{\Omega}\cap B(x,r)$. Nevertheless, according to Federer's Theorem (see Lemma~\ref{Federer}), these formulations are equivalent whenever $E$ is a set of finite perimeter; otherwise, the term $ \mathcal H^{n-1}(\partial_M E\cap \Omega)=+\infty$ and thus \cite[(5.10.1)]{Z1989} holds for any bounded domain $\Omega$ with finite perimeter. 
\end{rem}

   We prove in Theorem \ref{equiva} that the open set with \eqref{admis 2 rep} is actually an admissible domain. Recall that admissible domain is also a BV-extension domain; see \cite[Corollary 1, Page 497]{M2011}.

\begin{lem}\label{Mazya-extension}
Let $\Omega\subset\mathbb R^n$ be a set of finite perimeter which is open and satisfies 
\eqref{almost theoretic}.
 If, for any measurable set $F\subset \Omega$, the inequality 
\eqref{admis 2 rep}
 holds, then for any $u\in BV(\Omega)$, there exists a constant $c>0$ and an extension $\hat u\in BV(\mathbb R^n)$ satisfying that  $\hat u(x)=u(x)$ whenever $x\in \Omega$ and $\hat u(x)=c$ whenever $x\in \Omega^c$, such that 
 \begin{equation}\label{ext}
     \|D\hat u\|(\mathbb R^n)\le (C_M+1)\|Du\|(\Omega)
 \end{equation}
follows, where $C_M$ is the same constant in \eqref{admis 2 rep}.
\end{lem}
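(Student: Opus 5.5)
The extension is obtained by applying \eqref{admis 2 rep} to the superlevel sets of $u$ and integrating over levels with the coarea formula \eqref{fr coarea}. The constant $c$ will be a \emph{median} of $u$ with respect to the boundary measure $\mathcal H^{n-1}\llcorner\partial^*\Omega$, so that on each side of $c$ the relevant set is the smaller one in the sense of the minimum in \eqref{admis 2 rep}.

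\textbf{Reduction.} First reduce to the case $u\ge c$ or $u\le c$ separately. Writing $u-c=(u-c)^+-(u-c)^-$, both parts belong to $BV(\Omega)$. Their variations add up to $\|Du\|(\Omega)$, since the positive and negative parts have disjoint level sets in the coarea decomposition.

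\textbf{Choice of $c$.} For $t\in\mathbb R$ set $F_t=\{x\in\Omega: u(x)>t\}$. By the coarea formula, for a.e. $t$ the set $F_t$ has finite perimeter in $\Omega$, and
$$\|Du\|(\Omega)=\int_{\mathbb R}P(F_t;\Omega)\,dt.$$
Since $\Omega$ itself has finite perimeter, $F_t$ has finite perimeter in $\mathbb R^n$. Define $c$ as the infimum of those $t$ with $P(F_t;\Omega^c)\le P(\Omega\setminus F_t;\Omega^c)$. Both $t\mapsto P(F_t;\Omega^c)$ and $t\mapsto P(\Omega\setminus F_t;\Omega^c)$ are monotone: they measure the parts of $\partial^*\Omega$ where the inner trace of $u$ is $>t$, respectively $\le t$. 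Hence:
\begin{itemize}
\item for $t>c$, the minimum in \eqref{admis 2 rep} is $P(F_t;\Omega^c)$, so $P(F_t;\Omega^c)\le C_M P(F_t;\Omega)$;
\item for $t<c$, symmetrically, $P(\Omega\setminus F_t;\Omega^c)\le C_M P(F_t;\Omega)$.
\end{itemize}

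\textbf{Defining $\hat u$ and bounding its variation.} Set $\hat u=u$ on $\Omega$ and $\hat u=c$ on $\Omega^c$. The superlevel sets of $\hat u$ are
$$\{\hat u>t\}=F_t\ \text{ for } t\ge c,\qquad \{\hat u>t\}=F_t\cup\Omega^c\ \text{ for } t<c.$$
For $t\ge c$ we get
$$P(F_t)=P(F_t;\Omega)+P(F_t;\partial\Omega)\le (1+C_M)P(F_t;\Omega).$$
Here $P(F_t;\partial\Omega)=P(F_t;\Omega^c)$, and $\mathcal H^{n-1}(\partial\Omega\setminus\partial^*\Omega)=0$ together with Federer's theorem (Lemma~\ref{Federer}) ensures that no perimeter hides in the non-reduced part of the boundary.

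For $t<c$, the complement of $F_t\cup\Omega^c$ is $\Omega\setminus F_t$, so
$$P(F_t\cup\Omega^c)=P(\Omega\setminus F_t)=P(F_t;\Omega)+P(\Omega\setminus F_t;\Omega^c)\le(1+C_M)P(F_t;\Omega).$$
Integrating over $t$ with \eqref{fr coarea}, now applied on $\mathbb R^n$, gives \eqref{ext}.

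To make the coarea formula legitimate on $\mathbb R^n$, one needs $\hat u\in L^1_{\rm loc}$, which holds by construction. If one wants $\hat u\in BV(\mathbb R^n)$ with $L^1$ integrability, the constant value on the unbounded set $\Omega^c$ causes a problem unless $c=0$. So I would interpret the extension as $BV_{\rm loc}$ with finite total variation, or note that $\Omega$ is bounded (as in Lemma~\ref{admis bounded}) and cut off.

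\textbf{Main obstacle.} The delicate step is justifying the monotonicity and the level-set identities for $P(F_t;\Omega^c)$, i.e. that the boundary perimeter of $F_t$ lies in $\partial^*\Omega\cap\partial^*F_t$ with matching normals. This follows from Lemma~\ref{Maggi}: since $F_t\subset\Omega$, the set $\partial^*F_t\cap\Omega^c$ consists of points of $\{\nu_{F_t}=\nu_\Omega\}$, and these sets decrease in $t$ up to $\mathcal H^{n-1}$-null sets.
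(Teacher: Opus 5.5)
The paper does not prove this lemma; it quotes it from \cite[Corollary~1, p.~497]{M2011}, where it comes out of the trace theory behind Lemma~\ref{mazya s and n}. In that route one glues $u$ to a median $c$ of its trace $Tu$ on $\partial^*\Omega$. The jump part of $D\hat u$ then has mass $\int_{\partial^*\Omega}|Tu-c|\,d\mathcal H^{n-1}$, which is at most $C_M\|Du\|(\Omega)$ by \eqref{11trace}, because the infimum over constants is attained at a median.

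Your argument is the level-set version of the same idea, and it is a legitimate, more self-contained route. Set $g(t):=P(F_t;\Omega^c)$ and $h(t):=P(\Omega\setminus F_t;\Omega^c)$; these satisfy $g+h=P(\Omega)$ for a.e.\ $t$. So your $c$ is the level where $g$ crosses $P(\Omega)/2$, i.e.\ the same median. You never have to construct a trace; you only need the monotonicity of $g$ and $h$. You identify that step correctly, and it follows from a blow-up sandwich. At $x\in\{\nu_{F_s}=\nu_\Omega\}$ with $s>t$, the inclusions $F_s\subset F_t\subset\Omega$ force $F_t$ to blow up to the same half-space. Hence $x\in F_t^{(1/2)}$, and so $x\in\partial^*F_t$ for $\mathcal H^{n-1}$-a.e.\ such $x$. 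The hypothesis $\mathcal H^{n-1}(\partial\Omega\setminus\partial^*\Omega)=0$ is what lets you replace $\partial^*F_t\cap\Omega^c$ by $\partial^*F_t\cap\partial^*\Omega$ here.

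The one genuine omission is that you never show $c\in\mathbb R$, i.e.\ that $\{t: g(t)\le h(t)\}$ is nonempty and bounded below; without this, $\hat u$ is undefined. Appealing to finiteness of the trace would reimport Lemma~\ref{mazya s and n}(i), which your route otherwise avoids. Your own estimates close the gap:
\begin{itemize}
\item Suppose the set were unbounded below. Monotonicity then gives $g\le h$ for a.e.\ $t$, hence $P(F_t)=P(F_t;\Omega)+g(t)\le(1+C_M)P(F_t;\Omega)$. Since $|F_t|\to|\Omega|>0$ as $t\to-\infty$, \eqref{iso 2} yields $P(F_t;\Omega)\ge(1+C_M)^{-1}n\omega_n^{1/n}(|\Omega|/2)^{(n-1)/n}$ for a.e.\ $t\le t_*$. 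This contradicts $\int_{\mathbb R}P(F_t;\Omega)\,dt=\|Du\|(\Omega)<\infty$.
\item Suppose the set were empty. The same argument, using $P(\Omega\setminus F_t)\le(1+C_M)P(F_t;\Omega)$ and $|\Omega\setminus F_t|\to|\Omega|$ as $t\to+\infty$, gives the contradiction.
\end{itemize}

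There are also some smaller points to fix.
\begin{itemize}
\item Finite perimeter of $\Omega$ alone does not give $P(F_t)<\infty$. For a counterexample, remove from a ball the sections $\{x_1=1/k\}$, $k\ge1$, and $\{x_1=0\}$, and let $F$ be every other slab. What works is $\mathcal H^{n-1}(\partial\Omega)=P(\Omega)<\infty$, which follows from \eqref{almost theoretic}, together with Federer's criterion. This gives $P(F_t)\le P(F_t;\Omega)+P(\Omega)$, the fact from \cite{M2011} used in Step~1 of the proof of Theorem~\ref{equiva}.
\item Your \emph{Reduction} paragraph is never used and can be dropped.
\item You are right that the statement cannot be read literally. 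In general neither $\hat u\in L^1(\mathbb R^n)$ (unless $c=0$) nor $c>0$ can hold: $u\equiv-1$ forces $c=-1$. The correct reading, consistent with its use in Corollary~\ref{iff}, is $c\in\mathbb R$ and $\hat u-c=(u-c)\chi_\Omega\in BV(\mathbb R^n)$, which is automatic from $|\Omega|<\infty$.
\item Cutting off, as you suggest, would destroy $\hat u=c$ on $\Omega^c$, so drop that alternative.
\item Apply the coarea formula in its converse form (an $L^1$ function whose superlevel sets have integrable perimeter is $BV$) to $\hat u-c$. The version \eqref{fr coarea} as stated presupposes $f\in BV(\mathbb R^n)$.
\end{itemize}
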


We finally record the following result by Buckley and Koskela.

\begin{lem}[\cite{BK1995}]\label{Sobolev John}
     Suppose that $\Omega\subset \mathbb R^n$ is a domain with $|\Omega|<+\infty$ that supports a $(p^*,p)$-Poincar\'e inequality for $1\le p<n$, i.e. 
\begin{equation}\label{Poincare}
  \inf_{c\in\mathbb R}\left(\int_{\Omega}|u-c|^{p^*}\,dx\right)^{1/p^*}\le C_p\left(\int_\Omega|Du|^p\,dx\right)^{1/p}\quad \text{ for any }u\in W^{1,p}(\Omega), 
    \end{equation}       
where $p^*=\frac{np}{n-p}$, and that $\Omega$ satisfies the ball separation property with distinguished point $x_0\in \Omega$ and the constant $S>0$ as in Definition \ref{ball sep}. Then $\Omega$ is a $J$-John domain with John center $x_0$, where $$J=J(n, p, C_p, S, |\Omega|/\dist(x_0,\Omega^c)^n).$$
\end{lem}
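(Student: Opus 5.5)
The plan is the argument of Buckley and Koskela. First, the Sobolev--Poincar\'e inequality \eqref{Poincare} gives a measure bound on the parts of $\Omega$ cut off from $x_0$ by a sphere. Then ball separation turns this volume information into the John condition \eqref{John curve}. Throughout, write $r_0=\dist(x_0,\Omega^c)$.

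\emph{Step 1 (separated pieces are small).} Fix $\eta\in\mathbb R^n$ and $R>0$, and let $U$ be the union of the components of $\Omega\setminus\overline B(\eta,2R)$ that do not contain $x_0$. Define $u$ on $\Omega$ by:
\begin{itemize}
\item $u=1$ on $U$;
\item $u=0$ on $\Omega\setminus \overline B(\eta,2R)$ outside $U$, and on $\Omega\cap B(\eta,R)$;
\item $u(y)=(|y-\eta|-R)/R$ on the remaining annular part of $\Omega$.
\end{itemize}
This $u$ is locally Lipschitz in $\Omega$, because distinct components of $\Omega\setminus\overline B(\eta,2R)$ meet only across the sphere, where $u=1$ on both sides. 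We have $|\nabla u|\le 1/R$, supported in $\Omega\cap B(\eta,2R)$, so the right-hand side of \eqref{Poincare} is at most $C_p\,C(n)R^{(n-p)/p}$. For any constant $c$, the left-hand side is at least $\tfrac12\min\{|U|,|\{u=0\}|\}^{1/p^*}$. Raising to the power $p^*$ gives
\[
\min\{|U|,|\{u=0\}|\}\le C(n,p,C_p)R^n .
\]
If $R\le r_0/8$ and $x_0\notin B(\eta,4R)$, then $B(x_0,r_0/2)$ meets $B(\eta,2R)$ in at most half its volume, so $|\{u=0\}|\gtrsim r_0^n$. Hence $|U|\le C R^n$ for such $R$ (after possibly enlarging $C$ in terms of $|\Omega|/r_0^n$, since $|U|\le|\Omega|$). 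For $R\ge r_0/8$ the bound $|U|\le|\Omega|\le (|\Omega|/r_0^n)\,8^n R^n$ is trivial.

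\emph{Step 2 (from volume to diameter).} Apply Step 1 with the balls $B=B(\eta,S\dist(\eta,\partial\Omega))$ from Definition \ref{ball sep}, taking $R$ comparable to $Sd$ with $d=\dist(\eta,\partial\Omega)$. Any point $y\in\gamma[z,\eta]\setminus B$ lies in a component not containing $x_0$, so that component has volume $\le C(Sd)^n$. The main obstacle is turning this volume bound into a bound on $\dist(y,\partial\Omega)$ and on the length of the curve. I would try to proceed as follows.
\begin{itemize}
\item For $y$ in such a component $V$, the ball $B(y,\dist(y,\partial\Omega)/2)$ is contained in $\Omega$. Running Step 1 centred at points of $V$ at smaller scales shows that this ball lies essentially inside the separated region.
\item This gives $\dist(y,\partial\Omega)\le C(n,p,C_p,S,|\Omega|/r_0^n)\,d$ for all $y\in\gamma[z,\eta]$.
\item This Harnack-type comparison of distances to the boundary is the known intermediate condition in \cite{BK1995}. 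Together with ball separation, it lets one replace $\gamma$ by a chain of Whitney-type balls along $\gamma$.
\end{itemize}

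\emph{Step 3 (length estimate).} Replace $\gamma$ by a quasihyperbolic geodesic, or equivalently by the chain of Whitney-type balls from Step 2. The comparability $\dist(y,\partial\Omega)\lesssim\dist(\eta,\partial\Omega)$ for $y\in\gamma[z,\eta]$, combined with the volume bound, limits how many Whitney cubes of each size the subcurve $\gamma[z,\eta]$ can visit. Summing the resulting geometric series gives
\[
\ell(\gamma[z,\eta])\le J\,\dist(\eta,\partial\Omega).
\]
Here $J$ depends only on $n,p,C_p,S$ and $|\Omega|/r_0^n$, which is \eqref{John curve}. I expect Step 2 to be the delicate point; Steps 1 and 3 are standard test-function and Whitney-counting arguments.
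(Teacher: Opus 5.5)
The paper does not prove this lemma; it simply imports it from \cite{BK1995}, so your outline has to stand on its own. The overall plan is reasonable: bound the volume of the part of $\Omega$ cut off from $x_0$ by a ball, compare $d(\cdot):=\dist(\cdot,\partial\Omega)$ along $\gamma$, then count Whitney cubes. Two points need repair, and Step 3 has a genuine gap.

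\textbf{Step 1.} Your test function is not in $W^{1,p}(\Omega)$. The ramp is placed on all of $\Omega\cap(\overline B(\eta,2R)\setminus B(\eta,R))$, so $u\to1$ at every point of $\Omega\cap\partial B(\eta,2R)$. This includes the nonempty, relatively open part of that sphere bordering the component $W\ni x_0$ of $\Omega\setminus\overline B(\eta,2R)$, where you set $u=0$. Your justification (``$u=1$ on both sides'') overlooks $W$, and $u$ jumps across a set of positive $\mathcal H^{n-1}$-measure. The standard fix is to ramp only inside the separated set. Let $U$ be the union of the components of $\Omega\setminus\overline B(\eta,R)$ not containing $x_0$, and set $u=\chi_U\min\{1,(|y-\eta|-R)_+/R\}$. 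Any segment in $\Omega$ from $U$ to $\Omega\setminus U$ meets $\overline B(\eta,R)$, where the ramp vanishes. Hence $u$ is $1/R$-Lipschitz on convex subsets of $\Omega$, and your computation then gives $|U|\lesssim R^n$.

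\textbf{Step 2.} This step is not delicate. For $n\ge2$ and $y\notin\overline B$, the set $B(y,d(y))\setminus\overline B$ is connected and contains $y$. It therefore lies in the separated component, so $d(y)^n\lesssim (S\,d(\eta))^n$ directly. No argument ``at smaller scales'' is needed.

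\textbf{Step 3: the gap.} This is where the real work is. A volume bound at the single scale $d(\eta)$, together with $d\lesssim d(\eta)$ on $\gamma[z,\eta]$, still permits about $2^{kn}$ Whitney cubes of side $2^{-k}d(\eta)$ along $\gamma[z,\eta]$ (long thin tentacles inside the separated set). Since $\sum_k 2^{kn}2^{-k}$ diverges, no length bound follows. What is missing is to apply ball separation at \emph{other} points of $\gamma$, one for each scale:
\begin{enumerate}
\item Let $w_k$ be the last point of $\gamma[z,\eta]$ with $d(w_k)\le 2^{-k}d(\eta)$.
\item Step 2 applied at $w_k$ gives $d\lesssim 2^{-k}d(\eta)$ on $\gamma[z,w_k]$. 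By the choice of $w_{k+1}$, also $d>2^{-k-1}d(\eta)$ on $\gamma[w_{k+1},w_k]$.
\item Separation at $w_k$, combined with Step 1 at radius $S2^{-k}d(\eta)$, bounds the volume of the cut-off part by $C(S2^{-k}d(\eta))^n$.
\item Disjoint balls of radius $2^{-k-2}d(\eta)$ centred on the portion of $\gamma[w_{k+1},w_k]$ outside $B(w_k,2S2^{-k}d(\eta))$ lie in that cut-off part. Hence $\diam\gamma[w_{k+1},w_k]\lesssim 2^{-k}d(\eta)$.
\item Summing over $k$ gives $\diam\gamma[z,\eta]\le C\,d(\eta)$ for every $\eta\in\gamma$.
\end{enumerate}
It is this diameter bound that makes the Whitney count summable. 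Cubes of side $s$ met by $\gamma[z,w]$ lie in $B(z,Cs)$, so there are boundedly many per scale. A loop-erased Whitney chain along $\gamma$ then yields a rectifiable curve satisfying \eqref{John curve}.

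Finally, replacing $\gamma$ by a quasihyperbolic geodesic is not an equivalent option. The separation property says nothing about that curve, and the fact that quasihyperbolic geodesics are John curves already presupposes that $\Omega$ is John.
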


\section{Proof of Theorem \ref{counexample}}

In this section, we construct the domain $\Omega\subset\mathbb R^n$ in accord with the statement of Theorem \ref{counexample}. This domain $\Omega$ supports the trace inequality \eqref{11trace}, but it fails to be John and does not have the ball separation property.

For brevity, we introduce the following terminology: A continuous function $\omega:\mathbb R^+\to \mathbb R^+$ is a \emph{modulus of continuity} if $\omega$ is non-decreasing and satisfies $
\lim_{t\to 0^+}\omega(t)=0$.

For $k\in \mathbb N^+$, let $E_k$ be the non-empty set consisting of $C(n)(k!)^{n-1}$ distinct points on the sphere $\partial B_{1-2^{-k}}$ with mutual distance at least $n^{-1}({k!})^{-1}$ such that, $E_k$ is symmetric with respect to the hyperplane $\{x_1=0\}$ and satisfies
 \begin{equation}\label{dense bo sep}
     \dist(z, E_k)\le 2\sqrt{n}(k!)^{-1}\quad \text{for each }z\in \partial B_{1-2^{-k}}.
 \end{equation}
 
Given $x\in E_k$, for $0<\delta\le n^{-2}$, we write
$$B_{x,k}:= B\left(x,\frac{2^{-k-2}} {(k!)^n}\right),\quad 2B_{x,k}:=B\left(x,\frac{2^{-k-1}} {(k!)^n}\right)\quad\text{ and } \quad B_{x,k}^\delta:=B\left(x,\frac{2^{-k}}{(k!)^{1/\delta}}\right)$$
for brevity. In the following lemma, for any $c\in\mathbb R$, we write
$$\{x_1\le c\}=\{x=(x_1,\cdots,x_n)\in\mathbb R^n: x_1\le c\}.$$
 We define $\{x_1=c\}$ and $\{x_1\ge c\}$ in similar ways.
\begin{lem}\label{k0}
    Let $E_{\varphi,\vartheta}= \mathbb B^n\setminus \overline{B}(P,r)$ be the  sets defined  in Theorem \ref{cite cianchi}, where $B(P,r)$ is a ball with radius $r$ and center $P$ located at the negative $x_1$-half-axis. Then
    there exists an integer $k_0\ge 2$, depending only on the dimension $n$, such that for any integer $k\ge k_0$ and $\delta\in (0, n^{-2})$, 
 \begin{equation}\label{volume est layer}
     \sum_{x\in E_k}|\overline{B}_{x,k}^\delta\cap E_{\varphi,\vartheta}| \ge \frac{1}{2}\sum_{x\in E_k}|\overline{B}_{x,k}^\delta|,
 \end{equation}
and there exists a point $z\in E_k$ satisfying 
 \begin{equation}\label{bo in halfmoon0}
     \overline{B}_{z,k}^\delta \subset \subset E_{\varphi,\vartheta}.
 \end{equation}
\end{lem}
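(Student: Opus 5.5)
The plan is to reduce both claims to one fact: the spherical part of $\partial^*E_{\varphi,\vartheta}$ is strictly more than half of $\mathbb S^{n-1}$. Granting that, the claims follow by counting the points of $E_k$ that lie in $E_{\varphi,\vartheta}$ at a safe distance from $\partial B(P,r)$. Throughout, $E_{\varphi,\vartheta}$ is a fixed set depending only on $n$ (Theorem \ref{cite cianchi}), so every threshold below depends only on $n$. This is what makes $k_0=k_0(n)$.

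\emph{Step 1: the spherical part exceeds half.} Write $a:=\mathcal H^{n-1}(\partial^*E_{\varphi,\vartheta}\cap\mathbb S^{n-1})$ and $I:=\mathcal H^{n-1}(\partial B(P,r)\cap\mathbb B^n)$, so that $P(E_{\varphi,\vartheta})=a+I$. By \eqref{volumn equi}, the complement $F:=\mathbb B^n\setminus\overline{E_{\varphi,\vartheta}}$ is also admissible in Definition \ref{trace cons}. It has $P(F)=(\mathcal H^{n-1}(\mathbb S^{n-1})-a)+I$ and spherical trace $\mathcal H^{n-1}(\mathbb S^{n-1})-a$. Minimality of $E_{\varphi,\vartheta}$ in \eqref{tau b} then gives
\[
1+\frac{I}{a}\le 1+\frac{I}{\mathcal H^{n-1}(\mathbb S^{n-1})-a},
\]
hence $a\ge\frac12\mathcal H^{n-1}(\mathbb S^{n-1})$.

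For strictness, suppose equality holds. Then $F$ would also be a minimizer. But $F$ is the lens $\mathbb B^n\cap B(P,r)$, whose interior boundary is convex toward $F$ rather than of the half-moon form singled out by Theorem \ref{cite cianchi}. Alternatively, one can read off from \cite[(4.2)]{CFNT2018} that the cap $\partial E_{\varphi,\vartheta}\cap\mathbb S^{n-1}$ has polar angle $\vartheta>\pi/2$. Either way we set $a=(\frac12+2\eta)\mathcal H^{n-1}(\mathbb S^{n-1})$ with $\eta=\eta(n)>0$. I expect this strictness to be the main obstacle, and I would first try to extract it directly from the explicit conditions in \cite{CFNT2018}.

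\emph{Step 2: transfer to the sphere $\partial B_\rho$, $\rho=1-2^{-k}$.} Since $\partial B(P,r)$ meets $\mathbb S^{n-1}$ transversally along a sphere of codimension one, the following holds. For $t>0$ small, set
\[
A_\rho(t):=\{z\in\partial B_\rho:\dist(z,\mathbb R^n\setminus E_{\varphi,\vartheta})>t\}.
\]
Then as $\rho\to1$ and $t\to0$,
\[
\frac{\mathcal H^{n-1}(A_\rho(t))}{\mathcal H^{n-1}(\partial B_\rho)}\to\frac{a}{\mathcal H^{n-1}(\mathbb S^{n-1})}.
\]
So for $k$ large and $t=t_k:=10\sqrt n\,(k!)^{-1}$, this ratio is at least $\frac12+\eta$.

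\emph{Step 3: counting.} Since $\delta<n^{-2}$, the radius of $B^\delta_{x,k}$ is at most $2^{-k}(k!)^{-n^2}\ll t_k$. It is also smaller than $2^{-k}$, so these balls are inside $\mathbb B^n$ and pairwise disjoint, with equal volumes. By the separation $n^{-1}(k!)^{-1}$ and the covering property \eqref{dense bo sep}, a packing/covering argument on $\partial B_\rho$ shows the following: the number of points of $E_k$ in a region $A\subset\partial B_\rho$ equals $\mathcal H^{n-1}(A)$ times the density of $E_k$, up to an error of order the area of the $C(n)(k!)^{-1}$-neighbourhood of $\partial A$ times that density. For $A=A_\rho(t_k)$ this error is $O((k!)^{-1})$ relative to $\#E_k$. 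Hence, for $k\ge k_0(n)$, more than half of the points $x\in E_k$ satisfy $\overline B^\delta_{x,k}\subset\subset E_{\varphi,\vartheta}$; this gives \eqref{bo in halfmoon0}. Since each such ball lies entirely in $E_{\varphi,\vartheta}$ and all balls have the same volume, \eqref{volume est layer} follows as well.
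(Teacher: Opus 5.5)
\textbf{There is a genuine gap in Step 3.} Your Step 3 relies on an equidistribution claim that the hypotheses on $E_k$ do not give. A set on $\partial B_{1-2^{-k}}$ that is $n^{-1}(k!)^{-1}$-separated and satisfies \eqref{dense bo sep} has density comparable to $(k!)^{n-1}$ only up to multiplicative constants. A packing/covering argument gives $c(n)(k!)^{n-1}\mathcal H^{n-1}(A)\le\#(E_k\cap A)\le C(n)(k!)^{n-1}\mathcal H^{n-1}(A)$ for regions $A$ that are not too thin, with $c(n)<C(n)$. It does not give proportionality up to a boundary error.

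The covering radius exceeds the separation by the factor $2n^{3/2}$. So nothing prevents $E_k$ from being packed near the minimal separation on $\partial B_\rho\setminus A_\rho(t_k)$ and near the maximal covering radius on $A_\rho(t_k)$. The local densities then differ by a large factor depending only on $n$. The proportion of points in $A_\rho(t_k)$ can then be well below $\frac12$, even though $A_\rho(t_k)$ carries a fraction $\frac12+\eta$ of the area, because nothing ties $\eta$ to that factor. So ``more than half of the points'' does not follow.

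The hypothesis you never use is that $E_k$ is symmetric with respect to $\{x_1=0\}$, and the paper's proof runs on exactly this. By symmetry, at least half of the points of $E_k$ lie in $\{x_1\ge0\}$. It then suffices to show that, for some $d>0$ and $k_1$ depending only on $n$, the whole shell region $\{x_1>-d\}\cap(\mathbb B^n\setminus B_{1-2^{-k_1}})$ lies in $E_{\varphi,\vartheta}$; this is a law-of-cosines computation. Then for $k\ge k_0(n)$ every $\overline B^\delta_{x,k}$ with $x\in E_k\cap\{x_1\ge0\}$ is compactly contained in $E_{\varphi,\vartheta}$. That gives \eqref{bo in halfmoon0}, and, since all balls of generation $k$ have equal volume, also \eqref{volume est layer}, with no counting at all.

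\textbf{Step 1 strictness.} The shell inclusion needs exactly the strictness you left open in Step 1, namely $Q_1=\cos\vartheta<0$. Your first justification is not available: Theorem \ref{cite cianchi} only says the infimum is attained at $E_{\varphi,\vartheta}$, not that every minimizer has half-moon form. But strictness follows at once from \eqref{volumn equi}, with no need for the minimality comparison.

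Write $P=(-p,0,\dots,0)$ with $p>0$. Every $x\in\mathbb B^n$ with $|x-P|>r$ satisfies $1+2px_1+p^2>|x-P|^2>r^2=1+2pQ_1+p^2$. Hence $E_{\varphi,\vartheta}\subset\{x_1>Q_1\}\cap\mathbb B^n$, and this inclusion misses a nonempty open set (points near the $x_1$-axis just to the right of $Q_1$). So if $Q_1\ge0$ we would get $|E_{\varphi,\vartheta}|<\frac12|\mathbb B^n|$, contradicting \eqref{volumn equi}. This is essentially how the paper obtains its constant $d_n$.

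\textbf{Minor point in Step 2.} Your limit holds only along sequences with $t<1-\rho$, since otherwise $A_\rho(t)=\emptyset$. With $t_k=10\sqrt n\,(k!)^{-1}$ and $1-\rho=2^{-k}$ this holds for large $k$, but you should say so.
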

\begin{proof}
We first claim that, there exists $d_n=d_n(n)>0$ small for which
    \begin{equation}\label{2 fact}
        E_{\varphi,\vartheta} \cap \{x_1\le-d_n\}\neq \emptyset\quad \text{and}\quad O\in  B(P,r). 
    \end{equation}
     Indeed, we first suppose the first relation in \eqref{2 fact} fails. Then one has $E_{\varphi,\vartheta}\subset \{x_1\ge 0\}$. This immediately yields 
 $|E_{\varphi,\vartheta}|<|\mathbb B^n|/2$, which contradicts to \eqref{volumn equi}.
 
 We next suppose that, on the contrary, $O\notin \inter(B(P,r))$. Then since $P$ is located at the negative $x_1$-half-axis, $\overline{B}(P,r)\subset \{x_1\le 0\}$. This immediately yields 
$$|\overline{B}(P,r)\cap \mathbb B^n|=|\mathbb B^n \setminus E_{\varphi,\vartheta}|<|\mathbb B^n|/2,$$
 which contradicts to \eqref{volumn equi}. Hence, we establish \eqref{2 fact}.

Since $E_k$ is non-empty and symmetric with respect to $\{x_1=0\}$, for any $k\in \mathbb N^+$ large enough,
\begin{equation}\label{over half}
    \mathcal H^{0}(E_{k}\cap \{x_1>-d_n/3\})\ge \frac{1}{2}\mathcal H^{0}(E_{k})\ge 1.
\end{equation}
In particular, $E_{k}\cap \{x_1>-d_n/3\}\neq \emptyset$.

We claim that, there exists $k_0\in \mathbb N^+$ depending only on $n$, such that for any $k\ge k_0$ and $0<\delta<n^{-2}$, every $x\in E_{k}\cap \{x_1>-d_n/3\}$ satisfies
\begin{equation}\label{bo halfmoon}
    \overline{B}_{x,k}^\delta \subset \subset E_{\varphi,\vartheta}.
\end{equation}
Assuming that this claim holds for the moment, then we obtain \eqref{bo in halfmoon0} and, via \eqref{over half},  
\begin{align}\label{volume half2}
     \sum_{x\in E_k}|\overline{B}_{x,k}^\delta\cap E_{\varphi,\vartheta}| & \ge  \sum_{\overline{B _{x,k}^\delta}\subset E_{\varphi,\vartheta}}|\overline{B}_{x,k^\delta}| \ge  \sum_{x\in E_k\cap \{x_1>-d_n/3\}}|\overline{B}_{x,k}^\delta|\ge  \frac{1}{2}\sum_{x\in E_k}|\overline{B}_{x,k}^\delta|, 
\end{align}
which  implies \eqref{volume est layer}.

Now we proceed to show \eqref{bo halfmoon}.  Due to the convexity of $\overline{B}(P,r)$,  \eqref{2 fact} tells 
$$\{x_1=-d_n/2\}\cap \mathbb B^n\cap \overline{B}(P,r)\neq \emptyset.$$
Hence, since $\{x_1=-d_n/2\}\cap \mathbb S^{n-1}\subset (\overline{B}(P,r))^c$ follows from the first equality in \eqref{2 fact}, there exists a point $Z\in \{x_1=-d_n/2\}\cap \partial B(P,r) \cap \mathbb B^n$  and  $k_1=k_1(n)\in \mathbb N^+$ such that, for any point $M\in \{x_1> -d_n/2\}\cap(\mathbb B^n\setminus B_{1-2^{-k_1}})$ with $M=(M_1,\cdots,M_n)$,
\begin{equation}\label{k0 req}
  |\overrightarrow{OM}| \ge  1-2^{-k_1}> |\overrightarrow{OZ}|.
\end{equation}

Recall $P$ is on the negative part of $x_1$-axis. Let $\eta_M$ be the angle between $\overrightarrow{OP}$ and $\overrightarrow{OM}$, and let $\eta_Z$ be the angle between $\overrightarrow{OP}$ and $\overrightarrow{OZ}$. 
Observe that
$$|\overrightarrow{OM}|\cos \eta_M=-M_1 \ \text{ and } \ |\overrightarrow{OZ}|\cos \eta_Z=d_n/2.$$ Thus, by \eqref{k0 req}, $M_1>-d_n/2$ and the law of cosines, we have
\begin{align}  
|\overrightarrow{PM}|^2 &=|\overrightarrow{OM}|^2+|\overrightarrow{OP}|^2-2|\overrightarrow{OM}||\overrightarrow{OP}|\cos \eta_M=|\overrightarrow{OM}|^2+|\overrightarrow{OP}|^2+2|\overrightarrow{OP}|M_1\nonumber\\
& > |\overrightarrow{OZ}|^2+|\overrightarrow{OP}|^2-|\overrightarrow{OP}|d_n=|\overrightarrow{OZ}|^2 +|\overrightarrow{OP}|^2- 2|\overrightarrow{OZ}||\overrightarrow{OP}|\cos \eta_Z= |\overrightarrow{PZ}|^2=r^2.
\end{align}
Hence, $M\in \mathbb B^n\setminus \overline{B}(P,r)=E_{\varphi,\vartheta}$. By the arbitrariness of $M$, we conclude that
\begin{equation}\label{over half2}
    \{x_1>-d_n/2\}\cap (\mathbb B^n\setminus B_{1-2^{-k_1}}) \subset E_{\varphi,\vartheta}.
\end{equation}

Further observe that there exists another integer  $k_0\ge k_1+1$ depending only on $n$ such that,  for any $k\ge k_0$ and any $\delta\in (0, n^{-2})$, every $x\in E_k\cap \{x_1> -d_n/3\}$ satisfies 
\begin{equation}\label{bo in ring}
    B_{x,k}^\delta\subset\subset \{x_1> -d_n/2\}\cap(\mathbb B^n\setminus B_{1-2^{-k_1}}).
\end{equation}
Then as a consequence of \eqref{over half2}, we obtain \eqref{bo halfmoon}, thus concluding the proof of Lemma \ref{k0}. 
\end{proof}
 Let $k_0$ be as in Lemma \ref{k0}, and define 
$$D^\delta=\bigcup_{k=k_0}^{\infty}\bigcup_{x\in E_k} \overline B_{x,k}^\delta\quad \text{and} \quad \Omega^\delta=\mathbb B^n\setminus D^\delta.$$
 Then $\Omega^\delta$ is constructed by deleting from the unit ball $\mathbb B^n$ the countably many balls $$\{B_{x,k}^\delta: x\in E_k,\, k\geq k_0\};$$ see Figure \ref{fig:example2}.

\begin{figure}[htbp]  
    \centering
    \includegraphics[width=0.5\textwidth]{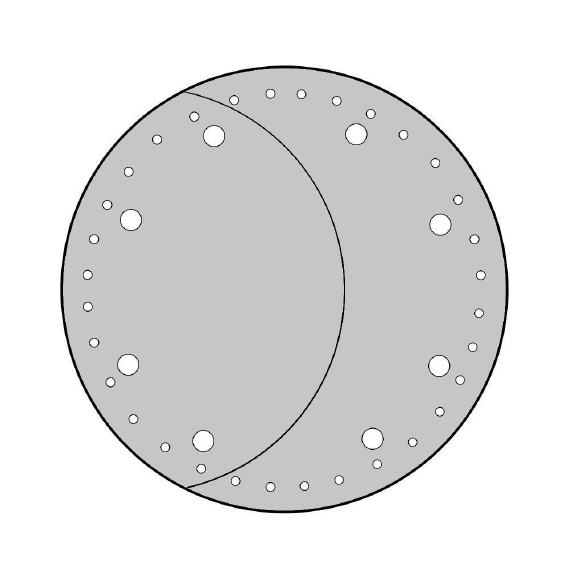}
    \caption{The set $\Omega^\delta$.}
    \label{fig:example2}
   \begin{picture}(0,0)(250,-10)
 \put(282,115){$E_{\varphi,\vartheta}$}
  \put(237, 139){$O$}
  \put(210, 195){$\Omega^\delta$}  
  \put(372,139){$x_1$}
\put(255,264){$\mathbb R^{n-1}$}
  \linethickness{0.8pt}
  \put(120,151){\vector(1,0){280}}   
  \put(251,30){\vector(0, 1){250}}  
   \end{picture}
\end{figure}

\begin{lem}\label{ch3 fin per}
The domain $\Omega^\delta$ has the following properties:
\item[(1)] $\Omega^\delta$ fails to be John and does not have the ball separation property. 
   \item[(2)] $\Omega^\delta$ is a set of finite perimeter with $\mathcal H^{n-1}(\partial\Omega^\delta\setminus\partial^*\Omega^\delta)=0$ and $0<|\Omega^\delta|<+\infty$. Moreover, 
   \begin{equation}\label{inn bound lem}
       \mathcal{H}^{n-1}(\partial^* D^\delta)=P(\Omega^\delta\Delta \mathbb B^n)\le C_1 (k_0!)^{-(n-1)/\delta},
   \end{equation}
   where $C_1=C_1(n)>0$, even though $\mathbb S^{n-1}\subset \partial D^\delta$.
\end{lem}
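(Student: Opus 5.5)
Two things have to be shown: the failure of the John and ball separation properties in (1), and the perimeter facts in (2). The plan for (1) rests on the fact that the removed balls accumulate at every point of $\mathbb S^{n-1}$. By \eqref{dense bo sep}, every point of $\partial B_{1-2^{-k}}$ lies within $2\sqrt n(k!)^{-1}$ of a center in $E_k$. The radius of $B^\delta_{x,k}$ is $2^{-k}(k!)^{-1/\delta}$, which is much smaller than $(k!)^{-1}$, so the balls are disjoint and $\Omega^\delta$ is open and connected (a union of open shells joined through the gaps).

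\textbf{Failure of John.} Take $x$ close to $\mathbb S^{n-1}$, with $|x|>1-2^{-k-1}$. Any curve $\gamma$ from $x$ to a fixed center $x_0$ must cross the sphere $\partial B_{1-2^{-k}}$ at some point $y$. There $\dist(y,\partial\Omega^\delta)\le 2\sqrt n(k!)^{-1}$, because $y$ is that close to a center in $E_k$, and each center lies in $D^\delta$. On the other hand $\ell(\gamma[x,y])\ge |x|-|y|\gtrsim 2^{-k-1}$. Since $2^{-k}k!\to\infty$, no uniform $J$ can satisfy \eqref{John curve}.

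\textbf{Failure of ball separation.} Take the same $y$ on the curve. The ball $B=B(y,S\dist(y,\partial\Omega^\delta))$ has radius at most $CS(k!)^{-1}\ll 2^{-k}$, so it does not contain $\gamma[x,y]$. Moreover, $\Omega^\delta\setminus\partial B$ does not separate points outside $B$ near $x$ from $x_0$: the complement of a tiny ball together with countably many tiny disjoint balls is still connected. Hence the second alternative in Definition \ref{ball sep} also fails.

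\textbf{Perimeter bound.} Apply subadditivity of perimeter under countable unions:
$$P(D^\delta)\le\sum_{k\ge k_0}\sum_{x\in E_k}n\omega_n\big(2^{-k}(k!)^{-1/\delta}\big)^{n-1}\le C(n)\sum_{k\ge k_0}(k!)^{n-1}2^{-k(n-1)}(k!)^{-(n-1)/\delta}.$$
Since $\delta<n^{-2}$, the exponent of $k!$ is at most $-(n-1)(1/\delta-1)$. The factor $(k!)^{n-1}$ is absorbed, and the series is dominated by its first term, giving $C_1(n)(k_0!)^{-(n-1)/\delta}$ after adjusting constants. (If one wants exactly the exponent $-(n-1)/\delta$, one first absorbs $(k!)^{n-1}\le (k!)^{(n-1)/2\delta}$ and then uses the ratio between consecutive terms.) Because $D^\delta\subset\mathbb B^n$ and $\Omega^\delta\Delta\mathbb B^n=D^\delta$, this bounds $P(\Omega^\delta\Delta\mathbb B^n)=\mathcal H^{n-1}(\partial^*D^\delta)$. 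It follows that $P(\Omega^\delta)\le P(\mathbb B^n)+P(D^\delta)<\infty$. Also $|\Omega^\delta|\ge\omega_n-\sum|B^\delta_{x,k}|>0$, and $|\Omega^\delta|<\infty$ trivially.

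\textbf{The boundary condition (main obstacle).} We have
$$\partial\Omega^\delta=\mathbb S^{n-1}\cup\bigcup\partial B^\delta_{x,k}.$$
No accumulation points arise in the interior, since each shell $\partial B_{1-2^{-k}}$ meets only finitely many balls. The closures of the balls stay disjoint from one another and from $\mathbb S^{n-1}$.
\begin{itemize}
\item Points of $\partial B^\delta_{x,k}$: each such ball is isolated from the rest, so these points lie in $\partial^*\Omega^\delta$.
\item Points of $\mathbb S^{n-1}$: I will show that $\mathbb S^{n-1}\subset\Omega^{\delta\,(1/2)}$ and has $\mathcal H^{n-1}$-full intersection with $\partial^*\Omega^\delta$. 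For $z\in\mathbb S^{n-1}$, the density of $D^\delta$ in $B(z,\rho)$ is at most a constant times the volume fraction of the balls in the shells, roughly $\sup_{k\gtrsim\log(1/\rho)}(k!)^{n-1}(k!)^{-n/\delta}2^{-kn}/2^{-k}$, which tends to $0$. So $\Omega^\delta$ has the same blow-up as $\mathbb B^n$ at $z$. Then Lemma \ref{Maggi}, applied to $\mathbb B^n\setminus D^\delta$, gives $\mathbb S^{n-1}\cap (D^\delta)^{(0)}\subset\partial^*\Omega^\delta$.
\end{itemize}
Hence $\partial\Omega^\delta\setminus\partial^*\Omega^\delta$ is $\mathcal H^{n-1}$-null. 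The inclusion $\mathbb S^{n-1}\subset\partial D^\delta$ holds because, by \eqref{dense bo sep}, the centers accumulate on $\mathbb S^{n-1}$.
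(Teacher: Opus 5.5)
Your proposal is correct and follows the paper's proof essentially step by step. The crossing point $y\in\partial B_{1-2^{-k}}$ with $\dist(y,\partial\Omega^\delta)\le 2\sqrt n\,(k!)^{-1}$ rules out both the John and the ball separation properties, and the vanishing density of $D^\delta$ at every point of $\mathbb S^{n-1}$, combined with Federer's theorem, gives $\mathcal H^{n-1}(\partial\Omega^\delta\setminus\partial^*\Omega^\delta)=0$. The only real variation is that you bound $\mathcal H^{n-1}(\partial^*D^\delta)=P(D^\delta)$ directly by countable subadditivity, $P(D^\delta)\le\sum_{k,x}P(B^\delta_{x,k})$. The paper instead first uses the density estimate to show that $\partial^*D^\delta$ misses $\mathbb S^{n-1}$, and obtains $P(\Omega^\delta)<\infty$ by lower semicontinuity; your route is slightly more direct.

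One small correction concerns your parenthetical about absorbing $(k!)^{n-1}\le (k!)^{(n-1)/(2\delta)}$: that would only yield the weaker exponent $-(n-1)/(2\delta)$. The correct and sufficient reason is that $(k!)^{(n-1)(1-1/\delta)}\le (k_0!)^{n-1}(k_0!)^{-(n-1)/\delta}$ for $k\ge k_0$, and $(k_0!)^{n-1}$ is a dimensional constant because $k_0=k_0(n)$.
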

\begin{proof}
    (1) 
    For  any  point $x_0\in \Omega^\delta$, choose a point $x\in \Omega^\delta\setminus B_{1-2^{-(k'+1)}}$, where 
    $k'\ge k_0$ is the integer satisfying 
\begin{equation}\label{k'}
    1-2^{-k'}-|x_0|\ge 2^{-k'} 
\end{equation}
    Then for each curve $\gamma$ joining $x$ to $x_0$, there exists a point $y_{\gamma}\in\partial B_{1-2^{-k'}}\cap\gamma$. In virtue of \eqref{dense bo sep} and $E_k\subset (\Omega^\delta)^c$, we have 
$$
        \dist(y_{\gamma},\partial \Omega^\delta)\le 2\sqrt{n}(k'!)^{-1}.
$$
    Hence, \eqref{k'} implies  
    \begin{align}\label{not join}
\frac{\ell(\gamma[x,y_\gamma])}{\dist(y_{\gamma},\partial \Omega^\delta)}&\ge  \frac{|x-y_\gamma|}{\dist(y_{\gamma},\partial \Omega^\delta)}
\ge 2^{-k'-1}\dist(y_{\gamma},\partial \Omega^\delta)^{-1}\nonumber\\
&\ge n^{-\frac{1}{2}} 2^{-(k'+2)}k'! \ \to +\infty\  \text{ as }\,  k'\to \infty.
    \end{align}
    This implies that $\Omega^\delta$ is not a John domain.
    In particular,  $x$ is far away from $\overline{B}(y_\gamma,  \dist(y_{\gamma},\partial \Omega^\delta))$.  
    
    Moreover, note that \eqref{not join} gives
    $$  \frac{|x-y_\gamma|}{\dist(y_{\gamma},\partial \Omega^\delta)}\to \infty \text{ as }\  k'\to \infty.$$ 
    we claim that for every fixed $N>0$, if $k'\gg 1$ is sufficiently large, then
$$
\Omega^\delta\setminus B\!\left(y_\gamma,\,N\dist(y_\gamma,\partial\Omega^\delta)\right) \quad \text{ is path-connected}.
$$

Indeed, set
$$
R_\gamma:=N\dist(y_\gamma,\partial\Omega^\delta).
$$
Since $\dist(y_\gamma,\partial\Omega^\delta)\le 2\sqrt n\,(k'!)^{-1}$ and
$2^{-k'}k'!\to\infty$ as $k'\to\infty$, by taking $k'$ large we may assume
$R_\gamma<2^{-k'-3}$. It follows that the ball $B(y_\gamma,R_\gamma)$ is so small
that it can intersect only finitely many deleted balls, and in fact only those from the
$k'$-th generation.
 Hence
$$
K_\gamma:=\overline{B}(y_\gamma,R_\gamma)\cup
\bigcup\Bigl\{\overline{B}^\delta_{z,k}:\overline{B}^\delta_{z,k}\cap
\overline{B}(y_\gamma,R_\gamma)\neq\emptyset,\, k\ge k_0, \, z\in E_k\Bigr\}
$$
is a compact obstacle.

Now start from the segment $[x,x_0]$. Whenever this segment meets a deleted ball
$\overline{B}^\delta_{z,k}$ disjoint from $K_\gamma$, replace the crossed chord by an arc
on $\partial (2B^\delta_{z,k})$. Since $n\ge2$, this can always be done inside
$\Omega^\delta$. Furthermore,
$$
\sum_{k\ge k_0}\sum_{z\in E_k}\diam(2B^\delta_{z,k})
\le C(n)\sum_{k\ge k_0}2^{-k}(k!)^{\,n-1-1/\delta}<\infty,
$$
because $1/\delta>n-1$. Therefore the total length added by all detours is finite,
and after also detouring around the compact set $K_\gamma$ we obtain a rectifiable
curve $\widetilde\gamma\subset \Omega^\delta\setminus B(y_\gamma,R_\gamma)$ joining
$x$ to $x_0$. Consequently,
$$
x \text{ and } x_0 \text{ belong to the same path-component of }
\Omega^\delta\setminus B(y_\gamma,R_\gamma),
$$
and $\Omega^\delta$ does not satisfy the ball separation property either.
    
    (2)  By the construction of $\Omega^\delta$, 
\begin{equation}\label{partial-omega}
    \partial \Omega^\delta= \mathbb S^{n-1}\cup\left(\bigcup_{k=k_0}^{\infty}\bigcup_{x\in E_k} \partial B_{x,k}^\delta\right).
\end{equation}

A direct computation gives that, for any $\delta\in (0,n^{-2}]$,  
\begin{align}\label{inn bound}
   &\mathcal H^{n-1}(\partial \Omega^\delta)-\mathcal H^{n-1}(\mathbb S^{n-1})=\sum_{k=k_0}^\infty\sum_{x\in E_k} \mathcal H^{n-1}\left(\partial B_{x,k}^\delta\right)=C(n)\sum_{k=k_0}^\infty (k!)^{n-1} \frac{2^{-(n-1)k}}{(k!)^{(n-1)/\delta}}\notag\\
   &\qquad\qquad\le C(n) (k_0!)^{(n-1)(1-1/\delta)}\sum_{k=k_0}^\infty 2^{-(n-1)k}\leq C(n) (k_0!)^{-(n-1)/\delta}<\infty.
\end{align}
Similarly, for any $m\ge k_0+1$,  $D_m^\delta:=\bigcup_{k=k_0}^{m}\bigcup_{x\in E_k} \overline {B_{x,k}^\delta}$ has Lipschitz boundary so that  
$$\lim_{m\to \infty}|(\mathbb B^n\setminus D_m^\delta)\Delta \Omega^\delta|=0\quad \text{and}\quad P(\mathbb B^n\setminus D_m^\delta)=\mathcal H^{n-1}(\partial(\mathbb B^n\setminus D_m^\delta))<\mathcal{H}^{n-1}(\partial\Omega^\delta).$$ 
Thus by the lower semicontinuity of the perimeter (cf. \cite[Theorem 12.15]{M2012}), $\Omega^\delta$ is a set of finite perimeter.

Hence, according to Lemma \ref{Federer}, in order to obtain $\mathcal H^{n-1}(\partial\Omega^\delta\setminus\partial^*\Omega^\delta)=0$, it suffices to show that 
\begin{equation}\label{t in m}  \partial\Omega^\delta\subset\partial_M\Omega^\delta.
\end{equation}

Note that in the construction of $\Omega^\delta$, the deleted balls $\{\overline B_{x,k}^\delta: x\in E_k,\, k=k_0, 3, \cdots\}$ are chosen with so small radii  that, for any pair of integers $k,l\ge k_0$ and any pair of distinct points $x\in E_k$ and $y\in E_l$, 
\begin{equation}\label{disjoint bo}
    2\overline B_{x,k}\cap 2\overline B_{y,l}=\emptyset.
\end{equation}
Therefore, $\partial B_{x,k}^\delta\subset \partial_M \Omega$ holds for any $x\in E_k$  with $k\ge k_0$. 

We next show that $\mathbb S^{n-1}\subset \partial_M\Omega$.
Let $z\in \mathbb S^{n-1}$ and $0<r<1/4$. Owing to the geometry of $\Omega^\delta$, if $\overline B_{x,k}^\delta\cap B(z,r)\neq \emptyset$,  then $\overline B_{x,k}^\delta\subset B(z, 2r)$ and hence $2r>1-|x|=2^{-k}$. Let $k_r=\lceil -\log_2 r \rceil-1$, where $\lceil -\log_2 r \rceil$ denotes  the smallest integer larger than or equal to $-\log_2 r$. Since
$$2^{-k_r}\leq 2r\quad \text{and}\quad k_r\to +\infty\quad  \text{as } r\rightarrow 0^+,$$ 
and since there are at most $C(n)(k!)^{n-1}$-many balls $B_{x,\,k}$ of radius $2^{-k-2}(k!)^{-n}$ in question, 
we have
\begin{align}\label{bd 0 mea}
	&\frac{|D^\delta\cap B(z, r)|}{|B(z, r)|}\leq \frac{1}{|B(z,r)|}\sum_{k= k_r}^{\infty}\sum_{x\in E_k}| \overline B^\delta_{x,k}|\leq  \frac{1}{|B(z,r)|}\sum_{k= k_r}^{\infty}\sum_{x\in E_k}| B_{x,k}|\nonumber\\
    = &\frac{C(n)}{ r^n}\sum_{k= k_r}^{\infty}2^{-n(k+2)}(k!)^{n-1-n^2}
	\leq  \frac{C(n)k_r^{n-1-n^2}}{1-2^{-n}}\to 0^+\quad \text{as }r\to 0^+.
\end{align}
In particular, we have
\begin{align*}
  \frac{|\Omega^\delta\cap B(z,r)|}{|B(z,r)|}= \frac{|\mathbb B^n\cap B(z,r)|}{|B(z,r)|}-\frac{|D^\delta\cap B(z, r)|}{|B(z, r)|}\rightarrow \frac{1}{2} \quad \text{as }r\to 0^+,
\end{align*}
so that  $z \in \partial_{M}\Omega^\delta$. This implies, by the arbitrariness of $z$, that $\mathbb S^{n-1} \subset \partial_M\Omega^\delta$, hence concluding \eqref{t in m}. 

It remains to show \eqref{inn bound lem}. Note that, by the arbitrariness of $z\in \mathbb S^{n-1}$, it follows from \eqref{bd 0 mea} that $\mathbb S^{n-1}\subset (D^\delta)^{(0)}$. Thus, we obtain from  $\partial^* D^\delta\subset \partial_M D^\delta$ and $\partial_M D^\delta \cap (D^\delta)^{(0)}=\emptyset$ that
$$\partial^* D^\delta\subset \mathbb R^n\setminus \mathbb S^{n-1}.$$
This, coupled with $\partial D^\delta= \partial \Omega^\delta$, implies $\partial^* D^\delta\subset \partial \Omega^\delta\setminus \mathbb S^{n-1}$. Whence,  \eqref{inn bound} yields \eqref{inn bound lem}, and we  conclude the proof of the property $(2)$.
\end{proof}

\begin{prop}\label{tau value}
    There exist two constants $c_0=c_0(n)\in(0,1)$ and $0<\epsilon_0=\epsilon_0(n)\ll 1$, and a modulus of continuity $\mu:\mathbb R^+\to \mathbb R^+$ such that, for any $\delta\in (0, \epsilon_0)$,
    \begin{equation}\label{tau o est}
        c_0\mu(\delta)<\tau(\mathbb B^n)-\tau(\Omega^\delta)<\mu(\delta).
    \end{equation}
\end{prop}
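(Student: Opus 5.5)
We will prove the two inequalities in \eqref{tau o est} separately, using as our modulus $\mu(\delta)=C_2\,(k_0!)^{-(n-1)/\delta}$ for a suitable $C_2=C_2(n)$; this is continuous, increasing and tends to $0$ as $\delta\to0^+$. The idea is that the removed balls carry total boundary measure of exactly this order by \eqref{inn bound lem}. Their volume is of smaller order, roughly $(k_0!)^{-n/\delta}\ll\mu(\delta)$.

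\emph{Lower bound on $\tau(\mathbb B^n)-\tau(\Omega^\delta)$.} We use the half-moon $E_{\varphi,\vartheta}$ from Theorem \ref{cite cianchi} as a competitor. Put $F=E_{\varphi,\vartheta}\cap\Omega^\delta=E_{\varphi,\vartheta}\setminus D^\delta$; by Lemma \ref{k0} and the symmetry of $E_k$ we expect $|F|\le|\Omega^\delta|/2$. If the volume constraint fails slightly, we restore it by shrinking the moon a little. This costs only $O((k_0!)^{-n/\delta})$ in the ratio, which is negligible against $\mu(\delta)$. By Lemma \ref{Maggi}, $P(F)$ differs from $P(E_{\varphi,\vartheta})$ by $\sum\mathcal H^{n-1}(\partial B^\delta_{x,k}\cap E_{\varphi,\vartheta})$ minus the small part of $\partial B(P,r)$ lying inside removed balls. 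The key gain is that every ball $\partial B^\delta_{x,k}\subset\subset E_{\varphi,\vartheta}$ contributes its \emph{whole} sphere to \emph{both} $P(F)$ and $\mathcal H^{n-1}(\partial^*F\cap\partial^*\Omega^\delta)$.

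Write $a=P(E_{\varphi,\vartheta})$, $b=\mathcal H^{n-1}(\partial^*E_{\varphi,\vartheta}\cap\mathbb S^{n-1})$, and let $s$ be the boundary measure added. Then
\[
\frac{a+s}{b+s}=\tau(\mathbb B^n)-\frac{(\tau(\mathbb B^n)-1)s}{b+s}.
\]
By \eqref{bo in halfmoon0}, already the $k_0$-th generation gives $s\ge c(n)(k_0!)^{n-1}2^{-(n-1)k_0}(k_0!)^{-(n-1)/\delta}$. The factor $(k_0!)^{n-1}2^{-(n-1)k_0}$ is a fixed constant depending only on $n$. Hence $s\ge c\,\mu(\delta)$ for a constant $c=c(n)$. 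Also $\tau(\mathbb B^n)-1>0$ by \eqref{tau b}, and $b$ is a dimensional constant, so we get the drop $\ge c_0\mu(\delta)$. Taking $\epsilon_0$ small makes the perturbation terms of volume order negligible.

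\emph{Upper bound on $\tau(\mathbb B^n)-\tau(\Omega^\delta)$.} Let $F\subset\Omega^\delta$ with $0<|F|\le|\Omega^\delta|/2$. Set $\tilde F=F\cup\bigcup\{\overline B^\delta_{x,k}:\ \text{$F$ has density one a.e.\ on the sphere }\partial B^\delta_{x,k}\}$, and more generally fill each hole by $F$ or leave it empty according to which choice decreases perimeter. Using Lemma \ref{Maggi}, we get
\[
P(\tilde F)\le P(F)+\text{(part of }\partial^*F\cap\partial^*\Omega^\delta\text{ on the holes)},
\]
while $\mathcal H^{n-1}(\partial^*\tilde F\cap\mathbb S^{n-1})=\mathcal H^{n-1}(\partial^*F\cap\mathbb S^{n-1})$. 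Also $|\tilde F|\le|\mathbb B^n|/2+O((k_0!)^{-n/\delta})$. Writing $t=\mathcal H^{n-1}(\partial^*F\cap\bigcup_{x,k}\partial B^\delta_{x,k})\le\mu(\delta)/C_2'$, we obtain
\[
\frac{P(F)}{\mathcal H^{n-1}(\partial^*F\cap\partial^*\Omega^\delta)}\ge\frac{P(\tilde F)-t}{\mathcal H^{n-1}(\partial^*\tilde F\cap\mathbb S^{n-1})+t}.
\]
Combining this with $P(\tilde F)\ge\tau'\,\mathcal H^{n-1}(\partial^*\tilde F\cap\mathbb S^{n-1})$, where $\tau'$ is $\tau(\mathbb B^n)$ corrected for the slight volume excess, yields $\tau(\Omega^\delta)\ge\tau(\mathbb B^n)-C t/\mathcal H^{n-1}(\partial^*\tilde F\cap\mathbb S^{n-1})$.

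\emph{Main obstacle.} Two degenerate situations must be controlled. The first is when $\mathcal H^{n-1}(\partial^*F\cap\mathbb S^{n-1})$ is tiny compared to $t$. In that case $F$ lives mostly near the holes, and the ratio is at least $P(F)/(\text{small}+t)$. Here we argue via the relative isoperimetric inequality \eqref{rela iso} around each hole that $P(F;\Omega^\delta)\ge(1+c)t$, because the holes are small spheres and a set hugging a sphere pays at least comparably. This gives a ratio far above $\tau(\mathbb B^n)$. The second is the volume mismatch $|\tilde F|>|\mathbb B^n|/2$. For this we would use a continuity, or scaling, argument for the ball's relative constant under volume fraction $\frac12+\eta$, with loss $O(\eta)$ and $\eta\lesssim(k_0!)^{-n/\delta}\ll\mu(\delta)$.

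Strictness of $\tau(\mathbb B^n)>\tau(\Omega^\delta)$ follows from the first part, and choosing $C_2$ large finishes the proof of \eqref{tau o est}.
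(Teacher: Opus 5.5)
Your lower bound on $\tau(\mathbb B^n)-\tau(\Omega^\delta)$ is essentially the paper's Step 6 and is fine. The competitor $E_{\varphi,\vartheta}\cap\Omega^\delta$ meets the volume constraint directly by \eqref{volume est layer}, so no shrinking is needed. A single hole $\overline B^\delta_{\hat x,k_0}\subset\subset E_{\varphi,\vartheta}$ already gives $s\ge c(n)\mu(\delta)$ in $(a+s)/(b+s)$.

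The upper bound, which is the hard direction, has a genuine gap. Write $\sigma=\mathcal H^{n-1}(\partial^*F\cap\mathbb S^{n-1})$.

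\textbf{Your estimate only covers $\sigma\gtrsim 1$.} Your conclusion $\tau(\Omega^\delta)\ge\tau(\mathbb B^n)-Ct/\sigma$ gives a drop $\le\mu(\delta)$ only when $\sigma$ is bounded below by a dimensional constant. But $t$ can be of order $\mu(\delta)$ (one $k_0$-generation hole sphere in the trace) while $\sigma$ is arbitrarily small. For example, $\sigma=\sqrt{\mu(\delta)}$ and $t\sim\mu(\delta)$ gives only $O(\sqrt{\mu(\delta)})$. So the whole regime $\sigma\lesssim1$, not just $\sigma\ll t$, requires showing that interior perimeter pays for the hole trace at rate at least $\tau(\mathbb B^n)-1$, up to $O(\mu(\delta))(\sigma+t)$.

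\textbf{The proposed fix does not work.} The claim $P(F;\Omega^\delta)\ge(1+c)t$ ``via \eqref{rela iso} around each hole'' is not true:
\begin{itemize}
\item A thin shell hugging $\partial B^\delta_{x,k}$ has $P(F;\Omega^\delta)$ arbitrarily close to $t$, so the factor $1+c$ fails.
\item A set containing a neighbourhood of a hole has no perimeter near that hole at all.
\item For a general $F$ whose boundary passes near a hole, one relative isoperimetric inequality at one scale gives nothing proportional to $t$.
\end{itemize}
Your filling step discards exactly the needed information. The criterion ``density one on $\partial B^\delta_{x,k}$'' is triggered by a shell of negligible volume. Afterwards $t$ enters your ratio only through the denominator, while $P(\tilde F)\ge\tau'\sigma$ says nothing about $t$. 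What you would need is $P(\tilde F)-\tau(\mathbb B^n)\sigma\ge(\tau(\mathbb B^n)-1)t-O(\mu(\delta))(\sigma+t)$, and nothing in the proposal supplies it.

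\textbf{What the paper does instead.} It uses a case analysis at the scale of $B_{x,k}$, which is astronomically larger than $B^\delta_{x,k}$. First it replaces $F$ by a constrained perimeter minimizer $U$, so that the density estimates \eqref{desity estimate mini} are available. It then splits the holes into three classes:
\begin{itemize}
\item $\mathscr A$: a neighbourhood of $\partial B_{x,k}$ lies inside $U$. The total area of all holes of generation $\ge k$ is $\ll|B_{x,k}|^{(n-1)/n}\lesssim P(U_0;\mathbb B^n)$, and this is absorbed.
\item $\mathscr K$: $\partial U$ passes near $\partial B_{x,k}$. Density estimates give local perimeter $\gtrsim(2^{-k}(k!)^{-n})^{n-1}\gg\mathcal H^{n-1}(\partial B^\delta_{x,k})$.
\item $\mathscr D$: isolated pieces inside $B_{x,k}$. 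The inequality $P(U_{x,k}\cup B^\delta_{x,k})\ge P(B^\delta_{x,k})$ gives rate $1$. This suffices only because $(\tau(\mathbb B^n)-1)^{-1}>1$ by the half-ball competitor \eqref{B half}.
\end{itemize}
Splitting $\widetilde U$ from $U\setminus\widetilde U$ prevents double-counting this perimeter in the $\mathbb S^{n-1}$ estimate.

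\textbf{The volume-excess step is unproved and avoidable.} You invoke continuity of the ball's constant in the volume fraction with $O(\eta)$ loss, which is not justified. The paper's split avoids it entirely:
\begin{itemize}
\item For $\sigma>a$, no filling is needed.
\item For large $|U\cap\Omega^\delta|$, the isoperimetric inequality suffices.
\item For small $|U\cap\Omega^\delta|$, $\tau(\mathbb B^n)$ applies to $\Omega^\delta\cap\widetilde U$ with no excess.
\end{itemize}
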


\begin{proof} 
Let $\lambda_0=\lambda_0(n)>0$ and $a=a(n)>0$ be two constants such that 
\begin{equation}\label{lamb a}
    n\omega_n^{\frac{1}{n}}\lambda_0^{\frac{n-1}{n}}=3\tau(\mathbb B^n)\quad \text{and}\quad \lambda_0 a^{\frac{n}{n-1}}=|\mathbb B^n|/10.
\end{equation}
Let $\delta\in (0, \epsilon_0)$ small for some $\epsilon_0>0$ to be determined.

We first show that, given any $F\subset \Omega^\delta$ with $0<|F|\le |\Omega^\delta|/2$ and 
 $\mathcal{H}^{n-1}(\partial^*\Omega^\delta\cap \partial^* F)>0$, there exists a modulus of continuity $\mu:\mathbb R^+\to \mathbb R^+$ independent of $F$ such that
\begin{equation}\label{tau disk}
(\tau(\mathbb B^n)-\mu(\delta))\mathcal{H}^{n-1}(\partial^*\Omega^\delta\cap \partial^*F)\leq P(F)\quad \text{ for any }0<\delta\le \epsilon_0.
\end{equation}
Once this holds, the second inequality in \eqref{tau o est} follows.

\medskip
\noindent{\bf Step 1: Show \eqref{tau disk} in the case $\mathcal{H}^{n-1}(\mathbb S^{n-1}\cap \partial^*F)> a$.} Recall the definition of $a$ in \eqref{lamb a}. 
Observe that 
 $\mathcal H^{n-1}(\partial \Omega^\delta\setminus \partial^*\Omega^\delta)=0$ and \eqref{inn bound} imply
 \begin{equation}\label{mu control 1}
     \mathcal{H}^{n-1}(\partial^*\Omega^\delta\cap \mathbb B^n)=  \mathcal H^{n-1}(\partial \Omega^\delta)-\mathcal H^{n-1}(\mathbb S^{n-1})\leq  C_1 (k_0!)^{-(n-1)/\delta}, 
 \end{equation}
where $C_1$ is the constant defined in Lemma \ref{ch3 fin per}.  Then, by letting 
$$\mu_1(\delta)=C_1 a^{-1} (k_0!)^{-(n-1)/\delta} \quad \text{ for } \  \delta>0,$$ 
which depends only on $n$, we get
\begin{equation}\label{del 2}
    \mathcal{H}^{n-1}(\partial^*\Omega^\delta\cap \partial^*F)-\mathcal{H}^{n-1}(\mathbb S^{n-1}\cap \partial^*F)\le \mathcal{H}^{n-1}(\partial^*\Omega^\delta \cap \mathbb B^n)< a\mu_1(\delta).
\end{equation}

Thanks to $\mathcal{H}^{n-1}(\mathbb S^{n-1}\cap \partial^* F)>a=a(n)$, the definition of $\tau(\mathbb B^n)$ and $|F|<\frac{1}{2}|\mathbb B^n|$ yield $$a<\mathcal H^{n-1}(\mathbb S^{n-1}\cap \partial^*F)\le\tau(\mathbb B^n)^{-1}P(F).$$ 
This together with  \eqref{del 2} implies that 
\begin{align*}
\mathcal{H}^{n-1}(\partial^*\Omega^\delta\cap \partial^*F)&<\mathcal H^{n-1}(\mathbb S^{n-1}\cap \partial^*F)+a\mu_1(\delta)\le  \tau(\mathbb B)^{-1}(1+\mu_1(\delta))P(F).
\end{align*} 
Thus, we conclude  \eqref{tau disk} with 
\begin{equation}\label{case1}
    \mu(\delta)=\tau(\mathbb B^n)\mu_1(\delta)\ge \tau(\mathbb B^n)\mu_1(\delta) /(1+\mu_1(\delta))
\end{equation}
where $\mu$ depends only on $n$.

In what follows, we assume that
\begin{equation}\label{case2}
    \mathcal{H}^{n-1}(\mathbb S^{n-1}\cap \partial^*F)\le a=a(n).
\end{equation}

\medskip

\noindent{\bf Step 2: Replace $F$ by a regular set $U$ under \eqref{case2}.} Let $b>0$ small (and particularly independent of $\delta$). By applying the argument in \cite[Step 1, Proof of Lemma 2.4]{SZ2024}, there exists a smooth open set $\widetilde F\subset \mathbb R^n$ satisfying
\begin{equation}\label{replace 1}
   P(\widetilde F)\le (1+b)P(F),\quad P(\widetilde F;\overline{\Omega}^\delta)\le (1+b)P(F;\Omega^\delta)  
\end{equation}
and
\begin{equation}\label{replace 2}
  \mathcal{H}^{n-1}(\partial^*\Omega^\delta\cap \partial^*F)\le (1+b)\mathcal{H}^{n-1}(\partial^*\Omega^\delta\cap \widetilde F).
\end{equation}
In particular, $\mathcal{H}^{n-1}(\partial^*\Omega^\delta\cap \widetilde F)\neq 0. $

Next, we consider a set of finite perimeter $ U\subset \mathbb R^n$ minimizing the Plateau problem:
$$\inf\{P(G): G\setminus \Omega^\delta = \widetilde F\setminus\Omega^\delta\}.$$
From \cite[Proposition 12.29]{M2012} and \cite[Example 16.13]{M2012},  it follows that $U$ exists and is a perimeter constraint minimizer. As  $U\setminus \Omega^\delta= \widetilde F\setminus \Omega^\delta$ coming from \cite[Proof of Proposition 12.29]{M2012}, we obtain that \begin{equation}\label{minimum}
    P(U)\le P(\widetilde F),\quad  P(U;\overline{\Omega}^\delta)\le P(\widetilde F; \overline{\Omega}^\delta)\quad \text{and}\quad  \mathcal{H}^{n-1}(\partial^*\Omega^\delta\cap U)=\mathcal{H}^{n-1}(\partial^*\Omega^\delta\cap \widetilde F)\neq 0,
\end{equation}
where the second inequality is given by \cite[Exercise 12.16]{M2012}.
Moreover, \cite[Theorem 16.14]{M2012} tells that there is a positive constant $c(n)$ such that
\begin{equation}\label{desity estimate mini}
c(n)\rho^{n-1}\le P(U;B(y,\rho))\le n\omega_n\rho^{n-1} \quad \text{ for any  $B(y, 2\rho)\subset\subset \Omega^\delta$ with $y\in \Omega^\delta\cap \partial U$}.
\end{equation}

We now consider two subcases 
$$|U\cap \Omega^\delta|> \lambda_0 a^{\frac{n}{n-1}}\quad \text{and}\quad |U\cap \Omega^\delta|\le \lambda_0 a^{\frac{n}{n-1}}.$$

\medskip
\noindent{\bf Step 3: Show \eqref{tau disk} when $|U\cap \Omega^\delta|> \lambda_0 a^{\frac{n}{n-1}}$ and \eqref{case2}.}
Recall that $\lambda_0=\lambda_0(n)$ and $a=a(n)$ from \eqref{lamb a}. 
Under the assumption \eqref{case2}, it follows from \eqref{del 2} that there exists $0<\epsilon_1=\epsilon_1(n)\ll a$ such that, whenever $0<\delta<\epsilon_1$, 
\begin{equation}\label{replace lhs}
    \mathcal{H}^{n-1}(\partial^*\Omega^\delta\cap \partial^* F) \leq a+\mu_1(\delta) \leq 2a.
\end{equation}
Since $|U\cap \Omega^\delta|> \lambda_0 a^{\frac{n}{n-1}}$, the isoperimetric inequality \eqref{iso 2} and the first equality in \eqref{lamb a} imply
$$P(U)\ge n \omega_n^{\frac{1}{n}}|U\cap \Omega^\delta|^{\frac{n-1}{n}}> 2a \tau(\mathbb B^n),$$
which together with \eqref{replace lhs} gives
$$P(U)> \tau(\mathbb B^n) \mathcal{H}^{n-1}(\partial^*\Omega^\delta\cap \partial^* F).$$
Thus, combining this with \eqref{replace 1} and
\eqref{minimum}, we conclude that
$$\tau(\mathbb B^n) \mathcal{H}^{n-1}(\partial^*\Omega^\delta\cap \partial^* F) < P(U)\leq P(\widetilde F)\leq (1+b) P(F).$$
Hence, by eventually sending $b\to 0^+$, we establish \eqref{tau disk} with 
\begin{equation}\label{subcase1}
    \epsilon_0=\epsilon_1\quad \text{and}\quad \mu(\delta)\text{ taken to be any modulus of continuity.}
\end{equation}

\noindent{\bf Step 4: Show \eqref{tau disk} when $|U\cap \Omega^\delta|\le  \lambda_0 a^{\frac{n}{n-1}}$ and \eqref{case2}.} 
By Lemma \ref{Maggi}, 
$$P(F)-P(F;\Omega^\delta)=\mathcal{H}^{n-1} (\partial^*\Omega^\delta\cap \partial^*F).$$
Thus, via \eqref{minimum} and \eqref{replace 1}, we have
\begin{equation}\label{u le f}
    P(U;\overline{\Omega}^\delta)\le P(\widetilde F; \overline{\Omega}^\delta)\le (1+b)P(F;\Omega^\delta)=(1+b)(P(F)-\mathcal{H}^{n-1} (\partial^*\Omega^\delta\cap \partial^*F)).
\end{equation}
Furthermore, combining \eqref{replace 2} and \eqref{minimum}, we get 
$$(1+b)^{-1}\mathcal{H}^{n-1} (\partial^*\Omega^\delta\cap \partial^*F)\le \mathcal H^{n-1}(\partial^*\Omega^\delta\cap \widetilde F)=\mathcal H^{n-1}(\partial^*\Omega^\delta\cap   U).$$
This coupled with \eqref{u le f}, tells that 
\begin{multline}
\label{tau disk 0}
    \left[(1+b)^{-2}(\tau(\mathbb B^n)-1-\hat \mu(\delta))+1\right]\mathcal{H}^{n-1} (\partial^*\Omega^\delta\cap \partial^*F)\\
    \le (1+b)^{-1}  P(U;\overline{\Omega}^\delta) +  \mathcal{H}^{n-1} (\partial^*\Omega^\delta\cap \partial^*F)\le P(F).
\end{multline}

We next show the existence of a modulus of continuity $\hat \mu:\mathbb R^+\to\mathbb R^+$, independent of $b$, satisfying
\begin{equation}\label{regu epsi2}
	(\tau(\mathbb B^n)-1-\hat \mu(\delta)) \mathcal{H}^{n-1}(\partial^*\Omega^\delta\cap U) \le P(U; \overline{\Omega}^\delta)\quad \text{for any  }0<\delta\le  \epsilon_0.
\end{equation}
Once this holds, then by letting
\begin{equation}\label{final-mu}
    \mu(\delta)=\frac{b^2+2b}{(1+b)^2}(\tau(\mathbb B^n)-1)+\frac{\hat \mu(\delta)}{(1+b)^2},
\end{equation}
and eventually sending $b\to 0^+$, \eqref{tau disk 0} directly implies \eqref{tau disk} with $\mu(\delta)=\hat\mu(\delta)$.

\noindent{\bf Step 4.1: Estimate on $\mathcal{H}^{n-1}(\partial^*\Omega^\delta\cap U)$.}
In order to estimate $\mathcal{H}^{n-1}(\partial^*\Omega^\delta\cap U)$, we divide it into four parts.
 For any set $G\subset\mathbb R^n$, denote  $$I_\epsilon(G):=\{z\in \mathbb R^n: \dist(z, G)< \epsilon\}.$$
 Let $a_k=(k!)^{-1/\delta} 2^{-k-9},\, k\ge k_0,$ and set
 \begin{align}\label{classify}
    & \mathscr{A}:= \left\{(x,k):I_{a_k}(\partial B_{x,k})\subset \inter(U), \,x\in E_k,\, k\geq k_0\right\},\nonumber\\
    & \mathscr{K}:=\left\{(x,k):I_{a_k}(\partial B_{x,k})\cap \partial U\neq \emptyset,\, x\in E_k, \, k\geq k_0\right\},\\
    & \mathscr{D}:=\left\{(x,k):I_{a_k}(\partial B_{x, k})\cap \overline{U}=\emptyset,\, x\in E_k,\, k\geq k_0\right\}\nonumber.
 \end{align}
We further set 
\begin{equation}\label{single compo}
    \widetilde U:=U \setminus \bigcup_{(x,k)\in \mathscr{D}}B_{x,k}\quad\text{and}\quad U_{x,k}:=U \cap B_{x,k}\quad \text{for any }(x,k)\in \mathscr{D}.
\end{equation}

For any $k\geq k_0$ and $x\in E_k$, since $I_{a_k}(\partial B_{x,k})$ is connected, it follows that if $I_{a_k}(\partial B_{x,k})\cap \overline U\not=\emptyset$,  then either $(x, k)\in \mathscr A$ or $(x, k)\in \mathscr K$. Therefore, 
$$\mathscr{A}\cup \mathscr K\cup \mathscr D=\{(x, k): x\in E_k,\ k\geq k_0\}.$$
In addition, since all the balls $2B_{x,k}$ are disjoint from $\mathbb S^{n-1}$, it follows from \eqref{disjoint bo} that
\begin{align}\label{lhs esti1}
     \mathcal{H}^{n-1}(\partial^*\Omega^\delta\cap U)\le & \mathcal{H}^{n-1}(\mathbb S^{n-1}\cap \widetilde U)+\sum_{(x,k)\in \mathscr{A}}\mathcal{H}^{n-1}\left(\partial B_{x,k}^\delta\right)\nonumber\\
     &\quad + \sum_{(x,k)\in \mathscr{K}}\mathcal{H}^{n-1}\left(\partial B_{x,k}^\delta\right)+ \sum_{(x,k)\in \mathscr{D}}\mathcal{H}^{n-1}\left(\partial B_{x,k}^\delta\cap  U\right)\nonumber\\
      =: & T_1+T_2+T_3+T_4. 
\end{align}
In what follows we estimate these four terms separately. 

\noindent{\bf Step 4.2: Estimate on $T_1$.}
Applying the fact $U\setminus \Omega^\delta=\widetilde F\setminus \Omega^\delta$, it follows from the definition of $\widetilde U$ in \eqref{single compo} that
$$\widetilde U\setminus \mathbb B^n= U\setminus \mathbb B^n=\widetilde F\setminus \mathbb B^n.$$
Due to the openness of $\widetilde F$, for any $z\in \mathbb S^{n-1}	\cap U=\mathbb S^{n-1}\cap \widetilde F$,  when the radius $\rho\ll 1$, 
  $$B(z,\rho)\cap (\widetilde U\setminus \mathbb B^n)=B(z,\rho)\cap (\widetilde F\setminus \mathbb B^n)=B(z,\rho)\setminus \mathbb B^n.$$ 
Thus, by letting $\rho\to 0^+$, we have 
$$\lim_{\rho\to 0^+}\frac{|B(z,\rho)\cap (\widetilde U\setminus \mathbb B^n)|}{|B(z,\rho)|}=\frac{1}{2}.$$ 
Analogously, recalling \eqref{single compo}, we also have 
$$\lim_{\rho\to 0^+}\frac{|B(z,\rho)\cap (U_{x,k}\cap  B_{x,k}^\delta)|}{|B(z,\rho)|}=\frac{1}{2}$$
for any $(x,k)\in \mathscr{D}$ and $z\in \partial B_{x,k}^\delta\cap U$. Therefore,
\begin{equation}\label{12 include}
    \mathbb S^{n-1}\cap \widetilde U\subset (\widetilde U\setminus \mathbb B^n)^{(\frac{1}{2})}\cap   \mathbb S^{n-1}\quad\text{and}\quad \partial B_{x,k}^\delta\cap U\subset (U_{x,k}\cap  B_{x,k}^\delta)^{(\frac{1}{2})}\cap (B_{x,k}^\delta)^c;
\end{equation}
recall \eqref{density set}.
By Federer's Theorem and applying Lemma \ref{Maggi}  to both $\widetilde U\setminus\mathbb B^n$ and $\Omega^\delta\cap \widetilde U$, \eqref{12 include}  yields
\begin{align}\label{B contr1}
    \mathcal{H}^{n-1}(  \mathbb S^{n-1}\cap \widetilde U)\le & P(\widetilde U\setminus \mathbb B^n;\mathbb S^{n-1}) \le  \mathcal{H}^{n-1}(\{\nu_{\widetilde U}=-\nu_{\mathbb B^n}\})+\mathcal{H}^{n-1}(\widetilde U^{(1)}\cap \mathbb S^{n-1})\nonumber\\
   \le & \mathcal{H}^{n-1}(\{\nu_{\widetilde U}=-\nu_{\mathbb B^n}\})+\mathcal{H}^{n-1}( \partial^* (\Omega^\delta\cap \widetilde U)\cap \mathbb S^{n-1}). 
\end{align}
Furthermore, due to $|\Omega^\delta\cap \widetilde U|\le \lambda_0 a^{\frac{n}{n-1}}$ and \eqref{lamb a}, the definition of $\tau(\mathbb B^n)$ yields 
\begin{equation}\label{B contr22}
    \mathcal{H}^{n-1}( \partial^* (\Omega^\delta\cap \widetilde U)\cap \mathbb S^{n-1})\le (\tau(\mathbb B^n)-1)^{-1}P(\Omega^\delta\cap \widetilde U; \mathbb B^n).
\end{equation}

We let 
$$
     B_{half}:=\mathbb B^n\cap \{x=(x_1,x_2,\cdots,x_n)\in \mathbb R^n:x_1<0\}.
$$
By the definition of $\tau(\mathbb B^n)$ and applying Lemma \ref{Maggi} to $ B_{half}$,
 \begin{equation}\label{B half}
     (\tau(\mathbb B^n)-1)^{-1}\ge \left( \frac{P(B_{half})}{\mathcal{H}^{n-1}(\mathbb S^{n-1}\cap \partial^* B_{half})}-1\right)^{-1} =\frac{\mathcal{H}^{n-1}(\mathbb S^{n-1}\cap \partial^* B_{half})}{P(B_{half};\mathbb B^n)}>1.
 \end{equation}
Therefore, we obtain from \eqref{B contr1}, \eqref{B contr22} 
and Lemma \ref{Maggi} that
\begin{align}\label{B contr1.5}
    & \mathcal{H}^{n-1}(\mathbb S^{n-1}\cap \widetilde U)  \le P(\widetilde U; \mathbb S^{n-1})+(\tau(\mathbb B^n)-1)^{-1}P(\Omega^\delta\cap \widetilde U; \mathbb B^n)\nonumber\\
   &\quad\quad \le   (\tau(\mathbb B^n)-1)^{-1}P(\widetilde U ;\mathbb S^{n-1}) + (\tau(\mathbb B^n)-1)^{-1}( P(\widetilde U;\overline{\Omega}^\delta\cap \mathbb B^n)+P(\Omega^\delta; \mathbb B^n\cap \widetilde U^{(1)})).
\end{align}
Moreover, \eqref{partial-omega}, the definition of $\widetilde U$ in \eqref{single compo} together with the definition of $T_2, T_3$ in \eqref{lhs esti1} give
\begin{equation}\label{A and K}
    P(\Omega^\delta;\mathbb B^n\cap \widetilde U^{(1)})\leq T_2+T_3.
\end{equation}
Thus, \eqref{B contr1.5} yields 
\begin{equation}\label{B contr2}
    \mathcal{H}^{n-1}(\mathbb S^{n-1}\cap \widetilde U)\le (\tau(\mathbb B^n)-1)^{-1}(P(\widetilde U; \overline{\Omega}^\delta )+ T_2+T_3).
\end{equation}

\noindent{\bf Step 4.3: Estimate on $T_2$.}
Let 
$$k_U:=\inf\{k\ge k_0:\text{ there exists } (y,k)\in\mathscr{A}\}\geq k_0,$$ 
and choose $x_U\in E_{k_U}$ for some $(x_U,k_U)\in \mathscr{A}$. Then, when $0<\delta<(2n+2)^{-2}$, since there are at most $C(n)(k!)^{n-1}$-many points in $E_k$ and $n+1-1/\delta<0$, 
\begin{align}\label{estimate-A-0}
        T_2 & \le \sum_{k\ge k_U}\sum_{x\in E_k}\mathcal{H}^{n-1}\left(\partial B_{x,k}^\delta\right) =\sum_{k\ge k_U}\sum_{x\in E_k} n\omega_n \left(\frac{2^{-k}}{(k!)^{1/\delta}}\right)^{n-1} \le C(n)(k_U!)^{(n-1)(1-1/\delta)}\sum_{k\ge k_U}2^{-(n-1)k}\notag\\
    &
\le C(n)(k_U!)^{(n-1)(n+1-1/\delta)}\frac{2^{-(k_U-2)(n-1)}}{(k_U!)^{n(n-1)}}=C(n)(k_U!)^{(n-1)(n+1-1/\delta)}|B_{x_U, k_U}|^{\frac{n-1}{n}}\notag\\
  & \leq C(n) (k_0!)^{(n-1)(n+1-1/\delta)}|B_{x_U, k_U}|^{\frac{n-1}{n}}\le    C(n) (k_0!)^{(n-1)(n+1-1/\delta)}|B_{x_U,k_U}\cup (\widetilde U\cap \Omega^\delta)|^{\frac{n-1}{n}}.
\end{align}

 Write $U_0=B_{x_U,k_U}\cup (\widetilde U\cap \Omega^\delta)$ for brevity. As $|\Omega^\delta\cap \widetilde U|\le \lambda_0 a^{\frac{n}{n-1}}$, by the second equality in \eqref{lamb a}, it holds that whenever $\delta<\epsilon_2$ for some $0<\epsilon_2=\epsilon_2(n)<\epsilon_1$,
 $$|U_0|\le|U\cap \Omega^\delta|+|2B_{x_U,k_U}|<|\mathbb B^n|/2<|\mathbb B^n\setminus U_0|.$$  
  Hence, by the relative isoperimetric inequality \eqref{rela iso}, we have
 $$C_{re}|U_0|^{\frac{n-1}{n}}\le P(U_0; \mathbb B^n),$$
 where $C_{re}>0$ is as in \eqref{rela iso}. 
 This, coupled with \eqref{estimate-A-0}, implies that there exist a constant $0<\epsilon_3=\epsilon_3(n)<\min\{\epsilon_2, (2n+2)^{-2}\}$ and a modulus of continuity $\mu_2:\mathbb R^+\to \mathbb R^+,$ such that, whenever $0<\delta<\epsilon_3$,
 \begin{equation}\label{estimate-A-1}
     T_2 \le  C(n)(k_0!)^{-(n-1)/\delta}P(U_0; \mathbb B^n)=  \mu_2(\delta)P(U_0; \mathbb B^n)\quad \text{and}\quad \mu_2(\delta)<1/2.
 \end{equation}

 Since $(x_U,k_U)\in\mathscr{A}$, we have $\partial B_{x_U,k_U}\subset (\widetilde U\cap \Omega^\delta)^{(1)}$. Then, by applying Lemma \ref{Maggi} to both $U_0$ and $\widetilde U\cap \Omega^\delta$, we obtain 
 $$P(U_0; \mathbb B^n)= P(\widetilde U\cap \Omega^\delta;\mathbb B^n)\le P(\widetilde U; \overline{\Omega}^\delta)+P(\Omega^\delta;\mathbb B^n\cap \widetilde U^{(1)}).$$
Whence, using \eqref{estimate-A-1} and \eqref{A and K}, we have  
\begin{align}
T_2\le \mu_2(\delta)\left(P(\widetilde U; \overline{\Omega}^\delta)+P(\Omega^\delta;\mathbb B^n\cap \widetilde U^{(1)})\right)\le \mu_2(\delta)\left(P(\widetilde U; \overline{\Omega}^\delta)+T_2+T_3\right).\nonumber
\end{align}
Absorbing the $\mu_2(\delta)T_2$-term on the right hand side via the left hand side, we obtain 
\begin{equation}\label{A contr}
	T_2\leq (1-\mu_2(\delta))^{-1}\mu_2(\delta)  (P(\widetilde U; \overline{\Omega}^\delta)+ T_3).
\end{equation} 

\noindent{\bf Step 4.4: Estimate on $T_3$.}
 For any $(x, k)\in \mathscr K$, there exists a point $z_x\in \partial U\cap I_{a_k}(\partial B_{x,k})$ so that 
\begin{equation}\label{ball inclu}
	B\left(z_x, \frac{2^{-k-3}}{(k!)^n}\right)\subset\subset 2B_{x,k}\setminus \overline B_{x,k}^\delta\subset \Omega^\delta
\end{equation}
holds by the definition of $\mathscr K$ in \eqref{classify} whenever $\delta<\epsilon_3$. Then it follows from \eqref{desity estimate mini} with $y=z_x$ and $\rho=\frac{2^{-k-4}}{(k!)^n}$ that
$$
\mathcal{H}^{n-1}\left(\partial B_{x,k}^\delta\right) \le n\omega_{n}2^{-k(n-1)}(k!)^{-(n-1)/\delta}\le  \frac{n\omega_n}{c(n)} 2^{4n-4} (k!)^{(n-1)(n-1/\delta)}P\left(U;B\left(z_x, \frac{2^{-k-4}}{(k!)^n}\right)\right).
$$
Since $k!\geq k_0!$ and $n-1/\delta<0$, it then follows from \eqref{ball inclu} that whenever $0<\delta<\epsilon_3$,
\begin{equation}\label{cover}
	\mathcal{H}^{n-1}\left(\partial B_{x,k}^\delta\right)\le \mu_3(\delta) P\left( U;2B_{x,k}\setminus \overline B_{x,k}^\delta\right)\quad \text{for any }(x,k)\in \mathscr{K},
\end{equation}
where $\mu_3(\delta)=\frac{n\omega_n}{c(n)} 2^{4n-4} (k_0!)^{(n-1)(n-1/\delta)}$. Due to \eqref{disjoint bo}, \eqref{cover} further implies
\begin{equation}\label{K contr}
	T_3\le \mu_3(\delta)\sum_{(x,k)\in \mathscr{K}}P\left( U; 2B_{x,k}\setminus B_{x,k}^\delta\right)\le \mu_3(\delta) P\left( U; \overline{\Omega}^\delta\right)\quad \text{whenever }0<\delta<\epsilon_3.
\end{equation}

\noindent{\bf Step 4.5: Estimate on $T_4$.} Fix $(x,k)\in \mathscr D$. Using Federer's theorem and the second inclusion in \eqref{12 include}, we have $$\mathcal{H}^{n-1}(\partial B_{x,k}^\delta\cap U)\le P(U_{x,k}\cap B_{x,k}^\delta; (B_{x,k}^\delta)^c).$$ Thus,
 observing that $P(U_{x,k}\cap B_{x,k}^\delta;B_{x,k}^\delta)=P(U_{x,k};B_{x,k}^\delta)$ follows from \cite[Example 12.16]{M2012}, we obtain that
\begin{align}\label{D contr1}
 \mathcal{H}^{n-1}(\partial B_{x,k}^\delta\cap U)&\le P(U_{x,k}\cap B_{x,k}^\delta; (B _{x,k}^\delta)^c)\nonumber\\
  &=P(U_{x,k}; (B_{x,k}^\delta)^c)+P(U_{x,k}\cap B_{x,k}^\delta)-P(U_{x,k}).
\end{align}
Furthermore, the isoperimetric inequality \eqref{iso 2} tells that 
\begin{equation}\label{isocompare}
    P(U_{x,k}\cap B_{x,k}^\delta)-P(U_{x,k})= P(B_{x,k}^\delta)-P(U_{x,k}\cup B_{x,k}^\delta)\le 0.
\end{equation}
Then since $(x,k)\in \mathscr{D}$, together with \eqref{single compo}, gives
$$P(U_{x,k}; B_{x,k}\setminus B_{x,k}^\delta))= P(U\setminus \widetilde U; B_{x,k}\setminus B_{x,k}^\delta),$$ it follows from \eqref{D contr1} and \eqref{isocompare} that 
$$
    \mathcal{H}^{n-1}(\partial B_{x,k}^\delta\cap U) \le P(U_{x,k}; (B_{x,k}^\delta)^c)= P(U\setminus \widetilde U; B_{x,k}\setminus B_{x,k}^\delta)\quad \text{for any }(x,k)\in \mathscr{D}.
$$
Therefore, by \eqref{disjoint bo} and the fact that $B_{x,k}\setminus B_{x,k}^\delta$ are  contained in $\overline{\Omega}^\delta$, we obtain
\begin{equation}\label{D contr2}
T_4\le \sum_{(x,k)\in \mathscr{D}}P(U\setminus \widetilde U; B_{x,k}\setminus B_{x,k}^\delta) \leq P(U\setminus \widetilde U; \overline{\Omega}^\delta).
\end{equation}

\noindent{\bf Step 4.6: Conclude \eqref{tau disk} when $|\Omega^\delta\cap \widetilde U|\le \lambda_0a^{\frac{n}{n-1}}$ and \eqref{case2}.}
As a consequence, by gathering \eqref{lhs esti1}, \eqref{A contr}, \eqref{K contr}, \eqref{B contr2} and \eqref{D contr2}, we have that whenever $0<\delta<\epsilon_3$,
\begin{equation}\label{conclude}
   \mathcal{H}^{n-1}(\partial^*\Omega^\delta\cap U)\le  \left((\tau(\mathbb B^n)-1)^{-1}+\omega_1(\delta)\right)P(\widetilde U; \overline{\Omega}^\delta)+ P(U\setminus \widetilde U; \overline{\Omega}^\delta)+\omega_2(\delta) P(U; \overline{\Omega}^\delta),
\end{equation}
where $\omega_1$ and $\omega_2$ are 
moduli of continuity given by
$$\omega_1(\delta)=\frac{\tau(\mathbb B^n)\mu_2(\delta)}{(\tau(\mathbb B^n)-1)(1-\mu_2(\delta))}\quad \text{and}\quad \omega_2(\delta)=\frac{\tau(\mathbb B^n) \mu_3(\delta)}{(\tau(\mathbb B^n)-1)(1-\mu_2(\delta))}.$$
Recall \eqref{single compo} gives $U\setminus\widetilde U\subset\bigcup_{(x,k)\in\mathscr{D}} B_{x,k}$. Then since \eqref{bd 0 mea} yields $\mathbb S^{n-1}\subset \left(\bigcup_{(x,k)\in\mathscr{D}}B_{x,k}\right)^{(0)}$, we have $$\mathbb R^n\setminus\bigcup_{(x,k)\in\mathscr{D}}\overline{B}_{x,k}\subset\Big(\bigcup_{(x,k)\in\mathscr{D}}B_{x,k}\Big)^{(0)}\subset(U\setminus\widetilde U)^{(0)}.$$
Hence, due to the inclusion 
$$cl(\widetilde U)\subset\mathbb R^n\setminus(\bigcup_{(x,k)\in \mathscr{D}}I_{a_k}(B_{x,k}))$$
following from the definition of $\mathscr{D}$ in \eqref{classify}, we obtain
\begin{equation}
    \widetilde U^{(1)}\cup \partial_M \widetilde U\subset (U\setminus\widetilde U)^{(0)},\quad (U\setminus\widetilde U)^{(1)}\cup \partial_M (U\setminus\widetilde U)\subset \widetilde U^{(0)} \quad \text{and}\quad\partial_M \widetilde U\cap \partial_M (U\setminus \widetilde U)=\emptyset.\nonumber
\end{equation}
Thus, by Lemma \ref{Federer}, we further obtain 
\begin{equation}
    P(\widetilde U;\overline{\Omega}^\delta)=P(\widetilde U;\overline{\Omega}^\delta\cap (U\setminus \widetilde U)^{(0)}), \quad P(U\setminus \widetilde U;\overline{\Omega}^\delta)=P(U\setminus \widetilde U;\overline{\Omega}^\delta\cap U^{(0)})\nonumber\\
\end{equation}
and 
$$\mathcal{H}^{n-1}(\{\nu_{\widetilde U}=\nu_{U\setminus\widetilde U}\})=0.$$
This implies, via Lemma \ref{Maggi} applied to $\widetilde U\cup (U\setminus \widetilde U)$,
that 
$$P(\widetilde U; \overline{\Omega}^\delta)+P(U\setminus \widetilde U; \overline{\Omega}^\delta)=P(U; \overline{\Omega}^\delta).$$
Then since \eqref{B half} implies $(\tau(\mathbb B^n)-1)^{-1}+\omega_1(\delta)>1$, we conclude from \eqref{conclude} that
$$ \mathcal{H}^{n-1}(\partial^*\Omega^\delta\cap U)\le \left((\tau(\mathbb B^n)-1)^{-1}+\omega(\delta)\right) P(U; \overline{\Omega}^\delta), \quad \omega(\delta)=\omega_1(\delta)+\omega_2(\delta).$$
Therefore, recalling the form of $\mu_2$ and $\mu_3$ in \eqref{estimate-A-1} and \eqref{cover}, respectively, we find that there exists a constant $C_2=C_2(n)>0$ such that \eqref{regu epsi2} holds with the modulus of continuity
\begin{equation}\label{subcase2 mu}
    \hat\mu(\delta)=(k_0!)^{-(n-1)/\delta}C_2 \ge \frac{(\tau(\mathbb B^n)-1)^2 \omega(\delta)}{1+(\tau(\mathbb B^n)-1)\omega(\delta)}\quad \text{and} \quad \epsilon_0=\epsilon_3.
\end{equation}
Hence, by letting $b\to 0^+$ and recalling the definition of $\mu$ in \eqref{final-mu}, we conclude \eqref{tau disk} with the modulus of continuity $\mu(\delta)=C_2 (k_0!)^{-(n-1)/\delta}$. 
\medskip

\noindent{\bf Step 5: An upper estimate on $\tau(\mathbb B^n)-\tau(\Omega^\dz)$.}
According to \eqref{case1}, \eqref{subcase1} and \eqref{subcase2 mu}, we conclude that 
there exists a constant $C_3=C_3(n)>0$ such that
\eqref{tau disk} holds with $\epsilon_0$, given in \eqref{subcase2 mu}, and 
\begin{equation}\label{mu}
    \mu(\delta)=C_3(k_0!)^{-(n-1)/\delta}.
\end{equation}
This yields the upper bound on $\tau(\mathbb B^n)-\tau(\Omega^\dz)$ in \eqref{tau o est}.

\medskip
\noindent{\bf Step 6: A lower bound on $\tau(\mathbb B^n)-\tau(\Omega^\dz)$.}
Finally, we show the first inequality in \eqref{tau o est}. To this end, given $\delta\in (0,\epsilon_0)$, we  estimate  
$$\mathcal{H}^{n-1}(\partial^*\Omega^\delta\cap \partial^*(E_{\varphi,\vartheta}\cap \Omega^\delta))$$
 from above and below, respectively, 
where $E_{\varphi,\vartheta}$ is the set defined in Theorem \ref{cite cianchi}.
 
By Lemma \ref{k0} and the openness of $E_{\varphi,\vartheta}$, there exists a point $\hat x\in E_{k_0}$ such that
$$\overline{B}_{\hat x,k_0}^\delta\subset\subset E_{\varphi,\vartheta}\subset E_{\varphi,\vartheta}^{(1)}.$$
 Thus, since $\partial B_{\hat x,k_0}^\delta\cup\mathbb S^{n-1}\subset\partial^*\Omega^\delta$, applying Lemma \ref{Maggi} to $E_{\varphi,\vartheta}\cap\Omega^\delta$ yields that 
\begin{align}\label{denomin}
  &   \mathcal{H}^{n-1}(\partial^*\Omega^\delta\cap \partial^*(E_{\varphi,\vartheta}\cap \Omega^\delta))=\mathcal{H}^{n-1}(\partial^*\Omega^\delta\cap \partial^* E_{\varphi,\vartheta})+\mathcal{H}^{n-1}(\partial^*\Omega^\delta\cap  E_{\varphi,\vartheta}^{(1)})\nonumber\\
 &\qquad\ge \mathcal{H}^{n-1}(\partial^*\Omega^\delta\cap \partial^* E_{\varphi,\vartheta}) + \mathcal{H}^{n-1}(\partial B_{\hat x,k_0}^\delta)\ge  \mathcal{H}^{n-1}(\mathbb S^{n-1}\cap \partial^*E_{\varphi,\vartheta})+ \mathcal{H}^{n-1}(\partial B_{\hat x,k_0}^\delta),
\end{align} 
leading to the desired lower bound.

On the other hand, combining \eqref{volumn equi}, \eqref{disjoint bo} and \eqref{volume est layer}, we obtain
\begin{align}\label{halfmoon vol}
     |E_{\varphi,\vartheta}\cap \Omega^\delta| &=|E_{\varphi,\vartheta}|-\sum_{k\ge k_0}\sum_{x\in E_k}|\overline{B}_{x,k}^\delta\cap E_{\varphi,\vartheta}| \le |E_{\varphi,\vartheta}|-\frac{1}{2}\sum_{k\ge k_0}\sum_{x\in E_k}|\overline{B}_{x,k}^\delta| \nonumber\\
     & = \frac{1}{2}|\mathbb B^n| -\frac{1}{2}\sum_{k\ge k_0}\sum_{x\in E_k}|\overline{B}_{x,k}^\delta|=\frac{1}{2}|\Omega^\delta|.
 \end{align}
Furthermore, due to $\Omega^\delta\subset \mathbb B^n$, one has $(\Omega^\delta)^{(1)}\subset\mathbb B^n$ . Hence, by \eqref{halfmoon vol}, the definition of $\tau(\Omega^\delta)$ and applying Lemma \ref{Maggi} to $E_{\varphi,\vartheta}\cap \Omega^\delta$, we obtain 
\begin{align}\label{numerator1}
 & (\tau(\Omega^\delta)-1)\mathcal{H}^{n-1}(\partial^*\Omega^\delta\cap \partial^*(E_{\varphi,\vartheta}\cap \Omega^\delta))\le P(E_{\varphi,\vartheta}\cap \Omega^\delta)-\mathcal{H}^{n-1}(\partial^*\Omega^\delta\cap \partial^*(E_{\varphi,\vartheta}\cap \Omega^\delta)) \nonumber\\
 &\qquad\qquad\qquad\qquad\qquad=  P(E_{\varphi,\vartheta}\cap\Omega^\delta;(\Omega^\delta)^{(1)})
      \le  P(E_{\varphi,\vartheta};(\Omega^\delta)^{(1)})\le P(E_{\varphi,\vartheta};\mathbb B^n).
\end{align}
This gives us the desired upper bound. 

As a result, recalling the form of $\mu$, we obtain from \eqref{denomin} and \eqref{numerator1} that,   whenever $\delta\in (0, \epsilon_0)$,
\begin{align}
    \tau(\Omega^\delta)-1&\le \frac{P(E_{\varphi,\vartheta};\mathbb B^n)}{\mathcal{H}^{n-1}(\partial^*\Omega^\delta\cap \partial^*(E_{\varphi,\vartheta}\cap \Omega^\delta))}\le  \frac{P(E_{\varphi,\vartheta};\mathbb B^n)}{\mathcal{H}^{n-1}(\mathbb S^{n-1}\cap \partial^*E_{\varphi,\vartheta})+ \mathcal{H}^{n-1}(\partial B_{\hat x,k_0}^\delta)} \nonumber\\
  & = P(E_{\varphi,\vartheta};\mathbb B^n) \left[\frac{1}{\mathcal{H}^{n-1}(\mathbb S^{n-1}\cap \partial^*E_{\varphi,\vartheta})}\right.\nonumber\\
   &\quad  \qquad \qquad \left.-\frac{ \mathcal{H}^{n-1}(\partial B_{\hat x,k_0}^\delta)}{(\mathcal{H}^{n-1}(\mathbb S^{n-1}\cap \partial^*E_{\varphi,\vartheta})+ \mathcal{H}^{n-1}(\partial B_{\hat x,k_0}^\delta))\mathcal{H}^{n-1}(\mathbb S^{n-1}\cap \partial^*E_{\varphi,\vartheta})}\right], \label{tau omega 0} 
\end{align}
   where we applied
$$\frac 1{\alpha+\beta}= \frac 1{\beta} - \frac{\alpha}{(\az+\bz)\bz} \quad \text{ with } \ \az=\mathcal{H}^{n-1}(\partial B_{\hat x,k_0}^\delta),\, \beta= \mathcal{H}^{n-1}(\mathbb S^{n-1}\cap \partial^*E_{\varphi,\vartheta})$$
in the last line. Then since \eqref{mu} yields 
$$\mathcal{H}^{n-1}(\partial B_{\hat x,k_0}^\delta)= n\omega_n 2^{-(n-1)k_0}(k_0!)^{-(n-1)/\delta}\ge c(n)\mu(\delta),$$
we continue from \eqref{tau omega 0} to obtain the existence of a constant $c_0=c_0(n)>0$ that
\begin{equation}
\label{tau omega}
   \tau(\Omega^\delta)-1  \le  \frac{P(E_{\varphi,\vartheta};\mathbb B^n)}{\mathcal{H}^{n-1}(\mathbb S^{n-1}\cap \partial^*E_{\varphi,\vartheta})}- c_0 \mu(\delta),
\end{equation}

Furthermore, by Theorem \ref{cite cianchi} and applying Lemma \ref{Maggi} to $E_{\varphi,\vartheta}$, it follows that
$$\tau(\mathbb B^n)=1+P(E_{\varphi,\vartheta};\mathbb B^n)/\mathcal{H}^{n-1}(\mathbb S^{n-1}\cap \partial^*E_{\varphi,\vartheta}).$$ Thus, \eqref{tau omega} yields the first inequality in \eqref{tau o est}.
Therefore,  
the proof of Proposition~\ref{tau value} is complete.
\end{proof}

\begin{proof}[Proof of Theorem~\ref{counexample}]
 For any $\epsilon>0$, using Proposition~\ref{tau value} and  recalling the form of $\mu$ given in \eqref{mu},  there exists $\delta\in (0,\epsilon_0)$ such that $0<\mu(\delta)=C_3(n) 
 (k_0!)^{-(n-1)/\delta}\leq \epsilon /2$. Therefore, taking $D=D^\delta$ and $\Omega=\Omega^\delta$, it follows from \eqref{tau o est} in Proposition~\ref{tau value} that $\tau(\mathbb B^n )>\tau(\Omega)>\tau(\mathbb B^n)-\epsilon$. Moreover, by \eqref{inn bound lem} in Lemma~\ref{ch3 fin per}, we have $$P(\Omega\Delta \mathbb B^n)\le C_1(n)  (k_0!)^{-(n-1)/\delta} \leq C(n)\epsilon,$$
 where $C(n)=C_1(n)/(2C_3(n))$. Additionally, the fact that $D$ consists of countably many balls is a direct consequence of the construction.
\end{proof}

\section{Proof of Theorem \ref{equiva}.}\label{proof eq}
In this section, it is always assumed that $\Omega\subset \mathbb R^n$ satisfies \eqref{almost theoretic}. Our aim is to prove Theorem~\ref{equiva} and Corollary~\ref{iff}. 

\subsection{Basic properties of sets}

We first show the following lemma  stating that, up to a modification on a  set of $\mathcal{L}^n$-measurable zero, $\Omega$ is an open set.
\begin{lem}\label{open rep}
     Given $\Omega\subset\mathbb R^n$ satisfying \eqref{almost theoretic}, define  
    \begin{equation}\label{repres}
        \widetilde \Omega:=\left\{x\in \mathbb R^n: \text{ there exists }r>0 \text{ such that } |\Omega\cap B(x,r)|=|B(x,r)| \right\}.
    \end{equation}
  Then the following properties hold:
  \begin{enumerate}
      \item [(i)]   $\widetilde \Omega$ is open and satisfies 
  \begin{equation}\label{open replace}
    |\widetilde  \Omega\Delta \Omega|=0,\quad \mathcal H^{n-1}(\partial^*\widetilde  \Omega\Delta \partial^* \Omega)=0\quad\text{and}\quad \tau(\widetilde \Omega)=\tau(\Omega).
\end{equation}
Thus,
\begin{equation}\label{new 1.3}
    \partial\widetilde \Omega\subset \partial \Omega\quad  \text{and}\quad  \mathcal{H}^{n-1}(\partial \widetilde  \Omega\setminus\partial^*\widetilde \Omega)=0.
\end{equation}
\item[(ii)] For any $u \in BV(\mathbb R^n)\cap L^{\infty}(\mathbb R^n)$,
\begin{equation}\label{varia replace}
    \|Du\|(\Omega^{(1)})=\|Du\|(\widetilde\Omega).
\end{equation}

  \end{enumerate}

\end{lem}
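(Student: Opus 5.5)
Openness of $\widetilde\Omega$ is immediate from \eqref{repres}: if $|\Omega\cap B(x,r)|=|B(x,r)|$, then every $y\in B(x,r/2)$ satisfies $|\Omega\cap B(y,r/2)|=|B(y,r/2)|$. So the plan concentrates on the measure-theoretic identities. Every $x\in\widetilde\Omega$ has density one in $\Omega$, hence $\widetilde\Omega\subset\Omega^{(1)}$. By Lebesgue's density theorem $|\Omega\Delta\Omega^{(1)}|=0$, so it suffices to show that $|\Omega^{(1)}\setminus\widetilde\Omega|=0$.

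Let $x\in\Omega^{(1)}\setminus\widetilde\Omega$. Every ball $B(x,r)$ meets $\Omega^c$ in positive measure, so $x$ lies in the topological boundary of (a good representative of) $\Omega$. Up to a null modification, as in \cite[Proposition 12.19]{M2012}, we may work with the representative whose topological boundary is the support of $\|D\chi_\Omega\|$. This gives $x\in\partial\Omega$, and therefore $\Omega^{(1)}\setminus\widetilde\Omega\subset\partial\Omega\setminus\partial_M\Omega$. By \eqref{almost theoretic} and Federer's theorem (Lemma \ref{Federer}), $\mathcal H^{n-1}(\partial\Omega\setminus\partial^*\Omega)=0$ and $\partial^*\Omega\subset\partial_M\Omega$. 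Hence $\mathcal H^{n-1}(\Omega^{(1)}\setminus\widetilde\Omega)=0$, and in particular this set has zero Lebesgue measure. Thus $|\widetilde\Omega\Delta\Omega|=0$. Since $\widetilde\Omega$ and $\Omega$ differ by a null set, $D\chi_{\widetilde\Omega}=D\chi_\Omega$, which gives the same reduced boundary, i.e. $\partial^*\widetilde\Omega=\partial^*\Omega$. The admissible competitors $F$ in Definition \ref{trace cons} coincide up to null sets, so $\tau(\widetilde\Omega)=\tau(\Omega)$.

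For \eqref{new 1.3}, take $x\in\partial\widetilde\Omega$. Then $x\notin\widetilde\Omega$, but each ball around $x$ meets $\widetilde\Omega$, and therefore meets $\Omega$ in positive measure. It also meets $\Omega^c$ in positive measure, since $x\notin\widetilde\Omega$. So $x$ lies in the support of $\|D\chi_\Omega\|$, i.e. in $\partial\Omega$ for the normalized representative. This is the main obstacle: the inclusion $\partial\widetilde\Omega\subset\partial\Omega$ only holds once $\Omega$ is taken in the normalized form $\overline{\partial^*\Omega}=\partial\Omega$. I would state explicitly that this normalization is used, noting that it does not affect \eqref{almost theoretic} in its $\mathcal H^{n-1}$-a.e. form. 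Then $\partial\widetilde\Omega\setminus\partial^*\widetilde\Omega\subset\partial\Omega\setminus\partial^*\Omega$, which is $\mathcal H^{n-1}$-null.

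For (ii), the key facts are that $\|Du\|$ vanishes on $\mathcal H^{n-1}$-null Borel sets for $u\in BV(\mathbb R^n)$ \cite[Lemma 3.76]{AFP2000}, and that $\widetilde\Omega\subset\Omega^{(1)}$ with $\mathcal H^{n-1}(\Omega^{(1)}\setminus\widetilde\Omega)=0$, as shown above. Combining these gives
\[
\|Du\|(\Omega^{(1)})=\|Du\|(\widetilde\Omega)+\|Du\|(\Omega^{(1)}\setminus\widetilde\Omega)=\|Du\|(\widetilde\Omega).
\]
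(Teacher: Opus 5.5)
Your route is the paper's: openness, $\widetilde\Omega\subset\Omega^{(1)}$, the inclusion $\Omega^{(1)}\setminus\widetilde\Omega\subset\partial\Omega\setminus\partial^*\Omega$ giving an $\mathcal H^{n-1}$-null (hence Lebesgue-null) difference, and then (ii). For (ii) you use that $\|Du\|$ does not charge $\mathcal H^{n-1}$-null Borel sets. The paper reaches the same conclusion through the coarea formula \eqref{fr coarea}; your citation is slightly more direct, and neither argument needs $u\in L^\infty$.

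The one thing that is wrong is what you call the main obstacle. No normalization of $\Omega$ is needed anywhere, and $\partial\widetilde\Omega\subset\partial\Omega$ holds for the given set $\Omega$. Trivially $\inter(\Omega)\subset\widetilde\Omega$. Every point of $\widetilde\Omega$ is a density-one point of $\Omega$, so $\widetilde\Omega\subset\overline{\Omega}$. Since $\widetilde\Omega$ is open, $\partial\widetilde\Omega=\overline{\widetilde\Omega}\setminus\widetilde\Omega\subset\overline{\Omega}\setminus\inter(\Omega)=\partial\Omega$. This is exactly the paper's argument. The same two inclusions give $\Omega^{(1)}\setminus\widetilde\Omega\subset\partial\Omega$ in your first step, again without passing to a good representative.

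Put differently, you have the monotonicity backwards. The set $\operatorname{spt}\|D\chi_\Omega\|=\{x:0<|\Omega\cap B(x,r)|<|B(x,r)|\ \text{for all } r>0\}$ lies in the topological boundary of \emph{every} representative of $\Omega$. The normalized representative of \cite[Proposition 12.19]{M2012} is the one with the \emph{smallest} topological boundary. Your argument already shows $\partial\widetilde\Omega\subset\operatorname{spt}\|D\chi_\Omega\|$, so the inclusion into the original $\partial\Omega$ follows at once.

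As written, though, the proposal asserts that this part of the statement fails without normalization, which is false. It then proves the inclusion only for a modified set, and claims without justification that \eqref{almost theoretic} survives the modification. The missing justification is precisely $\operatorname{spt}\|D\chi_\Omega\|\subset\partial\Omega$ for every representative, and that fact makes the modification unnecessary. Drop the normalization and use the two-line topological argument above; everything else in your proposal stands.
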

 
 \begin{proof}[Proof of Lemma 4.1]
 First, for any $x\in \widetilde \Omega$ satisfying $|\Omega\cap B(x,r)|=|B(x,r)|$ with some $r>0$, each point $y\in B(x,r)$ satisfies $B(y,r-|x-y|)\subset B(x,r)$. Therefore,
 $$|B(y,r-|x-y|)\setminus \Omega|\le |B(x,r)\setminus \Omega|=0.$$
 Thus, $y\in \widetilde \Omega$, and by the arbitrariness of $y$ and $x$, we conclude that $\widetilde \Omega$ is open. 

 We next prove \eqref{open replace}. For fixed $x\in \widetilde \Omega$, suppose that $|B(x, r)\cap\Omega|=|B(x, r)|$ for some $r>0$. Then 
 \begin{equation}\label{r' small}
    |B(x,\,r')\cap \Omega|=|B(x,\,r')| \quad \text{ for any } \ 0<r'<r.
 \end{equation}
 Hence, $x$ has fully density, i.e.\ 
 \begin{equation}\label{tilde in 1}
 \widetilde{\Omega} \subset \Omega^{(1)}.    
 \end{equation}
 Since $\Omega^{(1)} \cap \partial^*\Omega = \emptyset$, we have
$$\Omega^{(1)}\Delta\widetilde \Omega=\Omega^{(1)}\setminus\widetilde\Omega\subset \partial\Omega\setminus\partial^*\Omega.$$
Combining this with the assumption that $\mathcal{H}^{n-1}(\partial\Omega \setminus \partial^*\Omega) = 0$, we obtain
\begin{equation}\label{bound zero}
\mathcal{H}^{n-1}(\Omega^{(1)} \Delta \widetilde{\Omega}) \le \mathcal{H}^{n-1}(\partial\Omega \setminus \partial^*\Omega) = 0.
\end{equation}
Furthermore, by the Lebesgue-Besicovitch differentiation theorem (cf. \cite[Theorem 1.34]{EG92}), we have $|\Omega \Delta \Omega^{(1)}| = 0$. Therefore, applying the triangle inequality to $|\Omega \Delta \widetilde{\Omega}|$ and using \eqref{bound zero}, we conclude that
$$|\Omega\Delta\widetilde \Omega|\le |\Omega\Delta \Omega^{(1)}|+|\Omega^{(1)}\Delta\widetilde  \Omega|=0.$$
This immediately implies $\mathcal H^{n-1}(\partial^*\widetilde \Omega\Delta \partial^* \Omega)=0$. Recalling the definitions of $\tau(\widetilde \Omega)$ and $\tau(\Omega)$, we conclude that $\tau(\widetilde \Omega)=\tau(\Omega)$, and thus \eqref{open replace} holds.

   
Next, recalling the construction of $\widetilde \Omega$, we get $
\inter(\Omega)\subset \widetilde \Omega$. Moreover, \eqref{r' small} implies that, each point of $\widetilde\Omega$ is a limit point of $\Omega$; so is each point of $cl(\widetilde\Omega)$, i.e. $cl(\widetilde\Omega)\subset \overline{\Omega}$.  Consequently, $\partial \widetilde{\Omega} \subset \partial \Omega$. This, together with the second property in \eqref{open replace} and the assumption $\mathcal{H}^{n-1}(\partial\Omega \setminus \partial^*\Omega) = 0$, implies $\mathcal{H}^{n-1}(\partial\widetilde \Omega\setminus \partial^*\widetilde \Omega)=0$, thus giving \eqref{new 1.3}.

It remains to show \eqref{varia replace}. Indeed, for any $u\in BV(\mathbb R^n)$, in virtue of \eqref{fr coarea}  and \eqref{bound zero}, we have 
\begin{align*}
    \|Du\|(\Omega^{(1)})&=\int_{\mathbb R}\mathcal{H}^{n-1}(\partial^*\{u>t\}\cap\Omega^{(1)})\\
    & = \int_{\mathbb R}\left(\mathcal{H}^{n-1}(\partial^*\{u>t\}\cap\widetilde\Omega)+\mathcal{H}^{n-1}(\partial^*\{u>t\}\cap(\Omega^{(1)}\setminus\widetilde\Omega))\right)\,dt \nonumber\\
    & \le \|Du\|(\widetilde\Omega)+\int_{\mathbb R}\mathcal{H}^{n-1}(\Omega^{(1)}\Delta\widetilde\Omega)dt=\|Du\|(\widetilde\Omega).\nonumber
\end{align*}
Then recalling \eqref{tilde in 1}, we obtain the reversed inequality, and thus conclude \eqref{varia replace}.
 \end{proof}

Note that admissible domains are bounded by its definition, while such a condition is not imposed in Maz'ya's condition \eqref{admis 2 rep}. We next show that \eqref{admis 2 rep} necessarily gives the boundedness of $\Omega$.

\begin{lem}\label{admis bounded}
  If $\Omega$ is an open set satisfying Maz'ya's condition \eqref{admis 2 rep}, then  $\Omega$ is both connected and bounded.
\end{lem}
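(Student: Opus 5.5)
Both claims will be derived by feeding suitable test sets $F\subset\Omega$ into Maz'ya's condition \eqref{admis 2 rep}, together with the standing assumptions \eqref{almost theoretic}.

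\emph{Connectedness.} Suppose $\Omega=A\cup B$ with $A,B$ nonempty, disjoint and open. Take $F=A$. Since $A$ is open and $\Omega\setminus A=B$ is open, there is no boundary of $A$ inside $\Omega$. Indeed, $D\chi_A$ vanishes on $\Omega$ because $\chi_A$ is locally constant there, so $P(A;\Omega)=0$. Condition \eqref{admis 2 rep} then forces $\min\{P(A;\Omega^c),P(B;\Omega^c)\}=0$. On the other hand, $A$ has positive, finite measure, since $|A|\le|\Omega|<\infty$ and $A$ is a nonempty open set. By Lemma \ref{Maggi}, $P(A)=P(A;\Omega)+P(A;\Omega^c)=P(A;\Omega^c)$, and the isoperimetric inequality \eqref{iso 2} gives $P(A)>0$. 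The same reasoning gives $P(B;\Omega^c)=P(B)>0$. This contradiction shows that $\Omega$ is connected.

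\emph{Boundedness.} Suppose $\Omega$ is unbounded and set $F_R:=\Omega\setminus \overline{B}_R$, which is nonempty for every $R$. By the coarea formula \eqref{fr coarea} applied to $|x|$ (or slicing), for a.e.\ $R$ we have
\[P(F_R;\Omega)\le \mathcal H^{n-1}(\Omega\cap\partial B_R)=:m'(R),\qquad m(R):=|\Omega\setminus B_R|\to0 .\]
For large $R$ we have $|F_R|\le |\Omega|/2$. The smaller of the two terms in \eqref{admis 2 rep} must then be $P(F_R;\Omega^c)$, because
\[P(\Omega\setminus F_R;\Omega^c)\to P(\Omega;\Omega^c)=P(\Omega)>0,\]
whereas
\[P(F_R;\Omega^c)\le \mathcal H^{n-1}(\partial^*\Omega\setminus B_R)\to0.\]
Hence $P(F_R;\Omega^c)\le C_M m'(R)$, and therefore
\[P(F_R)\le (C_M+1)\,(-m'(R)).\]
Combining this with \eqref{iso 2} yields the differential inequality
\[-m'(R)\ge c(n,C_M)\,m(R)^{(n-1)/n}\]
for a.e.\ large $R$, valid while $m(R)>0$. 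Integrating gives $m(R)^{1/n}\le m(R_0)^{1/n}-c(R-R_0)/n$, so $m$ vanishes at some finite $R_1$. Thus $|\Omega\setminus B_{R_1}|=0$, and since $\Omega$ is open, $\Omega\subset \overline{B}_{R_1}$. This contradicts unboundedness.

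The main obstacle is making the slicing bound $P(F_R;\Omega)\le -m'(R)$ rigorous, since $m$ is only monotone. I would handle it by working with the integrated form: use the coarea identity $\int_{R}^{R'}\mathcal H^{n-1}(\Omega\cap\partial B_t)\,dt=m(R)-m(R')$, and note that $m$ is absolutely continuous here. The same step requires justifying that $P(\Omega\setminus F_R;\Omega^c)$ stays bounded below for large $R$, which follows from lower semicontinuity of perimeter in $R$.
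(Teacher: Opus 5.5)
Your proof is correct. The connectedness part is the same as the paper's. For boundedness you take a genuinely different and more elementary route.

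Both proofs extract the same key estimate from Maz'ya's condition. The paper tests with $\Omega\cap B_r$ and you with its complement $F_R$ in $\Omega$; these are equivalent since $P(F;\Omega)=P(\Omega\setminus F;\Omega)$. The estimate is that the perimeter of $\Omega$ outside $B_R$ is at most $C_M\mathcal H^{n-1}(\Omega\cap\partial B_R)$.

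\emph{The paper's route.} It tracks the tangential perimeter tail $f(r)=\int_r^\infty P_{\partial B_t}(\Omega\cap\partial B_t)\,dt$. This requires spherical slicing (Lemma \ref{sphere slice}) and the isoperimetric inequality on spheres (Lemma \ref{sphere isoperi}). It also needs an extra isoperimetric comparison of $\Omega\cup B_r$ with $B_r$, just to keep the comparison cap inside a hemisphere. The contradiction then comes from $-f'\ge C f^{(n-2)/(n-1)}$.

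\emph{Your route.} You add $P(F_R;\Omega)$ to both sides to control the full perimeter $P(F_R)$. You then apply the Euclidean isoperimetric inequality to the volume tail $m(R)$, which gives the classical ODE $-m'\ge c\,m^{(n-1)/n}$. This avoids all the spherical machinery. Run directly rather than by contradiction, it even yields the explicit bound $\Omega\subset\overline B_{R_0+(C_M+1)(|\Omega|/\omega_n)^{1/n}}$, for any $R_0$ with $P(\Omega;\mathbb R^n\setminus B_{R_0})<\tfrac12P(\Omega)$.

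Two small clean-ups:
\begin{enumerate}[(1)]
\item Your ``main obstacle'' is not one. Inside the open set $\Omega$, $F_R$ coincides with $\mathbb R^n\setminus\overline B_R$, so by locality $P(F_R;\Omega)=\mathcal H^{n-1}(\Omega\cap\partial B_R)$ for every $R$. Also $m(R)=\int_R^\infty\mathcal H^{n-1}(\Omega\cap\partial B_t)\,dt$ is absolutely continuous by polar coordinates. Note that this slice measure equals $-m'(R)$, not $m'(R)$ as written in your first display.
\item The lower bound on $P(\Omega\setminus F_R;\Omega^c)$ should come from locality, not lower semicontinuity. Lower semicontinuity of relative perimeter is for open sets, and $\Omega^c$ is closed. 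Since $\Omega\cap\overline B_R$ agrees with $\Omega$ on the open set $B_R$, and $\partial^*\Omega\subset\Omega^c$, you get $P(\Omega\cap\overline B_R;\Omega^c)\ge P(\Omega;B_R\cap\Omega^c)=P(\Omega;B_R)\to P(\Omega)$. This also identifies which term realizes the minimum for all $R\ge R_0$.
\end{enumerate}
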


In order to prove Lemma \ref{admis bounded},   we introduce the following definition; see \cite[Section 5.5]{F2015}.
\begin{defn}\label{Peri sphere}
    For any $r>0$ and any Borel set $E\subset \partial B_r$, the perimeter of $E$ on the sphere $\partial B_r$ is defined as 
$$P_{\partial B_r}(E)=\sup\left\{\int_{E} {\diver_{\partial B_r}} \phi \, d\mathcal{H}^{n-1}: \phi\in C^{\infty}(\partial B_r;\mathbb R^n),\, \phi(x)\cdot x=0 \text{ for all }x\in\partial B_r, \|\phi\|_{\infty}\le 1\right\},$$
where $\diver_{\partial B_r}$ denotes the tangential divergence on $\partial B_r$.
\end{defn}

Recall that, for any radius $r > 0$ and any pair of points $x, y \in \partial B_r$, the geodesic distance between $x$ and $y$ is given by
$$\dist_{\partial B_r}(x,y):=r\arccos\frac{\langle x,y\rangle}{r^2},$$
where $\langle x, y \rangle$ denotes the Euclidean inner product of $x$ and $y$.
The open geodesic ball with the radius $\rho\in (0, \pi r)$ and the center $q\in \partial B_r$ is denoted by 
$$B_{\partial B_r}(q,\rho):=\{x\in\partial B_r: \dist_{\partial B_r}(x,q)<\rho\}.$$

The following lemma tells the isoperimetric inequality over $\partial B_r$; see \cite{S1943}.

\begin{lem}\label{sphere isoperi}
    Let $r>0$. For a Borel set $E\subset\partial B_r$ with $P_{\partial B_r}(E)<+\infty$, suppose that the geodesic ball $B_{\partial B_r}(q,\rho)$ satisfies $q\in \partial B_r, \,\rho\in (0,\pi r)$ and $\mathcal{H}^{n-1}(E)=\mathcal{H}^{n-1}(B_{\partial B_r}(q,\rho))$. Then 
    $$P_{\partial B_r}(B_{\partial B_r}(q,\rho))\le P_{\partial B_r}(E),$$
     where $P_{\partial B_r}(B_{\partial B_r}(q,\rho))=(n-1)\omega_{n-1}r^{n-2}(\sin(\rho/r))^{n-2}$.
\end{lem}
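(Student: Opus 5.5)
We would follow Schmidt's classical argument \cite{S1943}, organized so that only the comparison with geodesic balls (caps) is needed. First we reduce to $r=1$. The dilation $x\mapsto x/r$ maps $\partial B_r$ onto $\mathbb S^{n-1}$, scales $\mathcal H^{n-1}$ by $r^{-(n-1)}$, scales $P_{\partial B_r}$ by $r^{-(n-2)}$ (by a change of variables in Definition~\ref{Peri sphere}), and maps $B_{\partial B_r}(q,\rho)$ onto $B_{\mathbb S^{n-1}}(q/r,\rho/r)$. So it suffices to treat the unit sphere.

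The explicit formula comes next. The relative boundary of the cap $B_{\mathbb S^{n-1}}(q,\theta)$, with $\theta\in(0,\pi)$, is the round $(n-2)$-sphere $\{x\in\mathbb S^{n-1}:\langle x,q\rangle=\cos\theta\}$ of Euclidean radius $\sin\theta$. This boundary is smooth, so the divergence theorem on $\mathbb S^{n-1}$ identifies $P_{\mathbb S^{n-1}}$ of the cap with the $\mathcal H^{n-2}$-measure of this sphere, namely $(n-1)\omega_{n-1}\sin^{n-2}\theta$. Here $(n-1)\omega_{n-1}$ is the area of the unit $(n-2)$-sphere. Rescaling back gives $(n-1)\omega_{n-1}r^{n-2}(\sin(\rho/r))^{n-2}$.

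For the inequality itself we would use two-point symmetrization. Let $m=\mathcal H^{n-1}(E)\in(0,\mathcal H^{n-1}(\mathbb S^{n-1}))$; the endpoint cases are trivial. We minimize $P_{\mathbb S^{n-1}}$ among Borel sets $G\subset\mathbb S^{n-1}$ with $\mathcal H^{n-1}(G)=m$. A minimizer exists: BV-compactness on the compact manifold $\mathbb S^{n-1}$ gives $L^1$-convergence of a minimizing sequence, the volume constraint passes to the limit, and $P_{\mathbb S^{n-1}}$ is lower semicontinuous.

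For a hyperplane $H$ through the origin, not containing $q$, the two-point symmetrization with respect to $H$ swaps points between the reflected pairs so that the half containing $q$ is favored. It preserves $\mathcal H^{n-1}$ and does not increase $P_{\mathbb S^{n-1}}$. One proves this first for smooth sets, by comparing boundary contributions pairwise under the reflection. It then extends to sets of finite perimeter by approximating in the strict sense, $\chi_{G_k}\to\chi_G$ in $L^1$ with $P(G_k)\to P(G)$, and using lower semicontinuity.

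Within the class of minimizers we then pick one, $G^\ast$, minimizing the distance $\int_{\mathbb S^{n-1}}|\chi_G-\chi_{C}|\,d\mathcal H^{n-1}$ to the cap $C$ centered at $q$ with $\mathcal H^{n-1}(C)=m$. The standard argument shows $G^\ast=C$ up to null sets. If not, there are Lebesgue density points $x\in G^\ast\setminus C$ and $y\in C\setminus G^\ast$ with $\dist_{\mathbb S^{n-1}}(y,q)<\dist_{\mathbb S^{n-1}}(x,q)$. Symmetrizing across the hyperplane bisecting $x$ and $y$ then strictly decreases the distance to $C$ and keeps both minimality and the measure, a contradiction. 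Hence $P_{\mathbb S^{n-1}}(C)\le P_{\mathbb S^{n-1}}(G^\ast)\le P_{\mathbb S^{n-1}}(E)$, which is the claim since $C$ and $B_{\mathbb S^{n-1}}(q,\rho)$ have the same measure and hence coincide.

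The main obstacle we anticipate is the perimeter inequality for two-point symmetrization in the BV setting on the sphere, in particular justifying the strict approximation by smooth sets with the measure exactly preserved. If this becomes too technical, we would instead cite the Lévy--Gromov inequality, or the spherical symmetrization results in \cite{S1943}, directly for this step.
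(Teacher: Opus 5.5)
Your proposal is correct, but it takes a different route from the paper. The paper does not prove Lemma~\ref{sphere isoperi}: it quotes the spherical isoperimetric inequality from \cite{S1943} and states the cap perimeter without derivation.

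You instead give a self-contained argument. You reduce to $r=1$ by dilation, with the correct scalings $r^{-(n-1)}$ and $r^{-(n-2)}$, and compute the cap perimeter $(n-1)\omega_{n-1}\sin^{n-2}\theta$ by the divergence theorem. You obtain a perimeter minimizer at fixed measure by BV compactness and lower semicontinuity, and then run the closest-minimizer argument with two-point symmetrization. That last part is really the Baernstein--Taylor/Benyamini method rather than Schmidt's original one. The key points check out:
\begin{itemize}
\item the bisecting hyperplane of $x,y\in\mathbb S^{n-1}$ passes through the origin and strictly separates $q$ from $x$;
\item the cap $C$ is invariant under the symmetrization;
\item the $L^1$-distance to $C$ drops strictly because of the swapped pairs near $y$ and $x=\sigma_H(y)$.
\end{itemize}

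The citation buys brevity. Your route buys an argument that works directly with the distributional perimeter of Definition~\ref{Peri sphere} for arbitrary Borel sets. Invoking the classical theorem may tacitly need a routine approximation step to reach this BV class.

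The obstacle you anticipate is not real. You never need approximants with exactly the measure of $G$, because two-point symmetrization is $L^1$-nonexpansive: $|G_k^H\Delta G^H|\le|G_k\Delta G|$. Hence any strict approximation plus lower semicontinuity yields $P_{\mathbb S^{n-1}}(G^H)\le P_{\mathbb S^{n-1}}(G)$. Alternatively, this follows without smoothing. Apply submodularity, $P_{\mathbb S^{n-1}}(A\cup B;U)+P_{\mathbb S^{n-1}}(A\cap B;U)\le P_{\mathbb S^{n-1}}(A;U)+P_{\mathbb S^{n-1}}(B;U)$, on the open hemisphere $U$ containing $q$, with $A=G$ and $B=\sigma_H(G)$, and note that the jump across the equator is unchanged.

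Finally, the paper uses the lemma only through \eqref{radius control}. There a non-sharp, scale-invariant bound $P_{\partial B_r}(E)\ge c(n)\,\mathcal H^{n-1}(E)^{(n-2)/(n-1)}$ for $\mathcal H^{n-1}(E)\le\frac12\mathcal H^{n-1}(\partial B_r)$ would already suffice.
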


\begin{lem}\label{sphere slice}
    Let $E\subset \mathbb R^n$ be a Borel set of finite perimeter. Then for a.e. $t\in (0,+\infty)$, 
    the spherical slice $E\cap \partial B_t$ is a Borel set of finite perimeter on $\partial B_t$, and 
    \begin{equation}\label{sphere slice form}
        \int_r^{+\infty} P_{\partial B_t}(E\cap \partial B_t) dt=\int_{\partial^*E\setminus B_r}\sqrt{1-\left(\frac{x}{|x|}\cdot \nu_E\right)^2}\, d\mathcal H^{n-1}\le P(E; (B_r)^c).
    \end{equation}
\end{lem}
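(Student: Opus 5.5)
We prove Lemma \ref{sphere slice} by reducing to the coarea formula for rectifiable sets applied to the radial function $f(x)=|x|$, restricted to the reduced boundary $\partial^*E$.

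\textbf{Step 1: coarea on the reduced boundary.} The set $\partial^*E$ is countably $\mathcal H^{n-1}$-rectifiable with approximate tangent plane $\nu_E(x)^\perp$ at $\mathcal H^{n-1}$-a.e.\ point. The tangential gradient of $f$ along $\partial^*E$ is the projection of $x/|x|$ onto $\nu_E^\perp$. Its length is
$$
J(x)=\sqrt{1-\left(\tfrac{x}{|x|}\cdot\nu_E\right)^2}.
$$
The coarea formula on rectifiable sets (e.g.\ \cite[Theorem 2.93]{AFP2000}) then gives
$$
\int_{\partial^*E\setminus B_r}J\,d\mathcal H^{n-1}=\int_r^{\infty}\mathcal H^{n-2}(\partial^*E\cap\partial B_t)\,dt.
$$
The inequality with $P(E;(B_r)^c)$ in \eqref{sphere slice form} is then immediate, since $J\le1$ and $\|D\chi_E\|=\mathcal H^{n-1}\llcorner\partial^*E$.

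\textbf{Step 2: slicing.} It remains to identify the slices. For a.e.\ $t$, the slice $E_t:=E\cap\partial B_t$ has finite perimeter on $\partial B_t$, with
$$
P_{\partial B_t}(E_t)=\mathcal H^{n-2}(\partial^*E\cap\partial B_t).
$$
I would prove this by approximating. Take smooth $u_k$ (mollifications of $\chi_E$) with $u_k\to\chi_E$ in $L^1$ and $\|Du_k\|\to P(E)$ strictly. Apply the smooth coarea formula to the tangential gradient $\nabla_{\partial B_t}u_k$ on spheres:
$$
\int_r^\infty\!\int_{\partial B_t}|\nabla_{\partial B_t}u_k|\,d\mathcal H^{n-1}\,dt\le\|Du_k\|((B_r)^c).
$$
By Fubini, for a.e.\ $t$ (along a subsequence) $u_k|_{\partial B_t}\to\chi_{E_t}$ in $L^1(\partial B_t)$. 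Lower semicontinuity of $P_{\partial B_t}$ (Definition \ref{Peri sphere} is a supremum of continuous functionals) together with Fatou's lemma gives $\int_r^\infty P_{\partial B_t}(E_t)\,dt\le P(E;(B_r)^c)<\infty$. Hence $P_{\partial B_t}(E_t)<\infty$ for a.e.\ $t$. To refine this inequality into the equality with the $J$-weighted integral, I would use a blow-up/structure argument. For $\mathcal H^{n-1}$-a.e.\ $x\in\partial^*E$ with $J(x)>0$, the sphere $\partial B_{|x|}$ is transversal to the tangent half-space. One then shows, in the spirit of Vol'pert's slicing theorem (cf.\ \cite[Theorem 3.108]{AFP2000}, for hyperplane slices, transported by the local diffeomorphism straightening spheres, or \cite[Section 18.3]{M2012}), that $\partial^*_{\partial B_t}E_t=\partial^*E\cap\partial B_t$ up to $\mathcal H^{n-2}$-null sets for a.e.\ $t$. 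Points with $J=0$ contribute nothing to either side by the coarea formula.

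\textbf{Main obstacle.} The main obstacle is the slicing identification in Step 2, namely adapting Vol'pert's theorem from hyperplane slices to spherical ones. I would handle it by working locally: cover $\mathbb R^n\setminus\{0\}$ by charts in which $|x|$ becomes a coordinate function (polar coordinates). In such a chart the spheres become parallel hyperplanes and the bi-Lipschitz distortion is controlled. One then applies the flat slicing result and undoes the change of variables, tracking the Jacobian, which yields exactly the factor $J$. Combining this with Step 1 gives the equality in \eqref{sphere slice form}.
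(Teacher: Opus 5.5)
Your proposal is correct and follows the paper's route. The paper's entire proof is the one-line application of the coarea formula \cite[Theorem 18.8]{M2012} to $f(x)=|x|$ on $\partial^*E$, which is your Step 1; it leaves implicit the identification $P_{\partial B_t}(E\cap\partial B_t)=\mathcal H^{n-2}(\partial^*E\cap\partial B_t)$ for a.e.\ $t$, which your Step 2 supplies via Vol'pert slicing in polar charts. Note also that your lower semicontinuity/Fatou argument alone already gives $\int_r^\infty P_{\partial B_t}(E\cap\partial B_t)\,dt\le P(E;(B_r)^c)$ and a.e.\ finiteness of the slice perimeters, which is all that the proof of Lemma \ref{admis bounded} later uses.
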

\begin{proof}
    This is obtained directly by applying the coarea formula \cite[Theorem 18.8]{M2012} to the radial function $f(x)=|x|$.
\end{proof}

\begin{proof}[Proof of Lemma \ref{admis bounded}.] 
\noindent{\bf Step 1:  $\Omega$ is connected.} First of all, we prove the connectedness of $\Omega$ by contradiction. Suppose that, on the contrary, $\Omega$ is disconnected. Then, by the openness of $\Omega$, there exist two disjoint nonempty open sets $\Omega_1$ and $\Omega_2$ such that $\Omega = \Omega_1 \cup \Omega_2$,  $\partial\Omega_1 \cap \Omega_2 = \emptyset$ and $\partial\Omega_2\cap \Omega_1=\emptyset$, hence    $P(\Omega_1;\Omega)= P(\Omega_2;\Omega) = 0$. Without loss of generality, we may assume that $$0 < P(\Omega_1) = P(\Omega_1;\Omega^c) \le P(\Omega_2;\Omega^c).$$ 
Then, by taking $F = \Omega_1$ in \eqref{admis 2 rep}, we obtain a contradiction. Thus, $\Omega$ is connected.  

\medskip

\noindent{\bf Step 2: $\Omega$ is bounded.}
On the contrary, suppose that $\Omega$ is unbounded. Thanks to the openness of $\Omega$, we have that  
\begin{equation}\label{strictly positive}
P(\Omega;\,(B_r)^c)> 0\ \text{ and } \ |\Omega\cap (B_r)^c|>0 \quad \text{ for every } r>0.
\end{equation} 
Therefore, 
$$|\Omega\cup B_r|=|B_r\cup(\Omega\cap (B_r)^c)|>|B_r| \quad \text{for every } \ r>0.$$
Moreover, $|\Omega\cup B_r|\leq |\Omega|+|B_r|<\infty$. Then the isoperimetric inequality \eqref{iso 2} implies
\begin{equation}\label{iso key}
       P(\Omega\cup B_r)>P(B_r)\quad \text{for every } r>0. 
\end{equation}
We claim that, there exists a constant $\hat R>0$  such that, for a.e. $r>\hat R$, 
\begin{equation}\label{iso control}
   \mathcal{H}^{n-1}(\overline{\Omega}\cap \partial B_r) < P(\Omega; (B_r)^c) < 2C_M\mathcal{H}^{n-1}(\Omega\cap \partial B_r),
\end{equation}
where $C_M>0$ is the constant in \eqref{admis 2 rep}.

\medskip

\noindent{\bf Step 2.1: Verify the first inequality in \eqref{iso control}.}
By \eqref{iso key} and applying Lemma \ref{Maggi} to $\Omega\cup B_r$, we get 
\begin{align}\label{chai}
     & P(\Omega; B_r^{(0)})+P(B_r;\Omega^{(0)})+\mathcal{H}^{n-1}(\{\nu_\Omega=\nu_{B_r}\})\nonumber\\
    &\qquad \qquad \qquad\quad  \qquad=P(\Omega\cup B_r) >  P(B_r)= P(B_r;\overline{\Omega})+P(B_r;(\overline{\Omega})^c)
\end{align}
 for every $r> 0$. Observe that $(\overline{\Omega})^c\subset\Omega^{(0)}$ and $\Omega^{(0)}\setminus (\overline{\Omega})^c\subset \partial\Omega \setminus \partial^*\Omega$. This, together with \eqref{almost theoretic}, implies 
 $$P(B_r;\Omega^{(0)})=P(B_r;(\overline{\Omega})^c).$$
Plugging this equality into \eqref{chai} yields 
\begin{equation}\label{iso cont3}
    P(\Omega; B_r^{(0)})+\mathcal{H}^{n-1}(\{\nu_\Omega=\nu_{B_r}\})>  P(B_r;\overline{\Omega})=\mathcal H^{n-1}(\overline{\Omega}\cap \partial B_r).
\end{equation}
 Furthermore, due to $|\partial\Omega|=0$ assumed in \eqref{almost theoretic}, the coarea formula applied to $|\partial\Omega|$ yields 
\begin{equation}\label{coare}
\mathcal{H}^{n-1}(\partial\Omega\cap \partial B_r)=0\quad  \text{for a.e. }  r\in (0, +\infty). 
\end{equation}
Whence, $\mathcal H^{n-1}(\{\nu_\Omega=\nu_{B_r}\})$ vanishes for almost every $r\in (0, +\infty)$, and substituting this into \eqref{iso cont3} yields the first inequality in \eqref{iso control}.

\medskip
\noindent{\bf Step 2.2: Verify the second inequality in \eqref{iso control}.}  
Since $\Omega$ has finite perimeter, 
$$P(\Omega; B_R)=\|D\chi_\Omega\|(B_R)$$
with $D\chi_\Omega$ a Radon measure. Plugging $r=R$ into the first inequality in \eqref{iso control} implies that,
for any $\epsilon>0$, there exists $R_\epsilon\gg 1$ such that, for any $R>R_\epsilon$, 
\begin{equation}\label{radon}
    P(\Omega)> P(\Omega)-P(\Omega;(B_R)^c)=P(\Omega;B_R)\ge P(\Omega)-\epsilon.
\end{equation}
  This, combined with \eqref{almost theoretic}, yields that there exists  another constant $\hat R>0$ satisfying that 
  $$P(B_{r})\ge P(B_{\hat R})>P(\Omega)\quad \text{and}\quad P(\Omega;B_{r})\ge P(\Omega;B_{\hat R})>\frac{3}{4}P(\Omega) \quad \text{for any }r\ge \hat R.$$
Therefore, for any $r\ge \hat{R}$, 
 \begin{align}\label{separate}
     P(\Omega; (B_{r})^c)&=P(\Omega)-P(\Omega; B_r) <\frac{1}{4}P(\Omega)\nonumber\\
     &=\frac{1}{4}\min\{P(\Omega), P(B_{r})\}< \frac{1}{2}\min\{P(\Omega; B_{r}), P(B_{r})\}.
 \end{align}

Due to the openness of $\Omega$, $\partial^*\Omega \subset \Omega^c$. Hence, 
\begin{equation}\label{inner convert}
    P(\Omega; (B_r)^c)=P(\Omega; (B_r)^c\cap \Omega^c)\quad \text{and}\quad P(\Omega; B_r)=P(\Omega; B_r^{(1)})=P(\Omega; B_r^{(1)}\cap \Omega^c)
\end{equation}
As a result, 
by further using \eqref{coare} and applying Lemma \ref{Maggi} to $\Omega\setminus B_r$, we arrive at
\begin{align}\label{equal 1}
    P(\Omega; (B_r)^c) & =P(\Omega; (B_r)^c\cap \Omega^c)=P(\Omega;B_r^{(0)}\cap \Omega^c)+\mathcal H^{n-1}(\{\nu_\Omega=-\nu_{B_r}\}\cap \Omega^c)\nonumber\\
    &=P(\Omega\setminus B_r;\Omega^c)-P(B_r;\Omega^{(1)}\cap \Omega^c)\quad \text{for a.e. }r\ge \hat R.
\end{align}
Analogously, by \eqref{coare} and applying Lemma \ref{Maggi} to $\Omega\cap B_r$, \eqref{inner convert} yields
\begin{align}\label{equal 2}
      P(\Omega; B_{r}) &=P(\Omega; B_r^{(1)}\cap \Omega^c)=P(\Omega; B_r^{(1)}\cap \Omega^c)+\mathcal H^{n-1}(\{\nu_\Omega=\nu_{B_r}\}\cap \Omega^c)\nonumber\\
      &= P(\Omega\cap B_r;\Omega^c)-P(B_r;\Omega^{(1)}\cap \Omega^c)\quad \text{for a.e. }r\ge \hat R.
  \end{align}
Therefore, due to \eqref{almost theoretic} and the fact that $\Omega^{(1)}\cap \Omega^c\subset \partial\Omega \setminus \partial^*\Omega$, \eqref{equal 1} and \eqref{equal 2} yield, for a.e. $r\ge \hat R$,
\begin{equation}\label{equal}
    0= P(B_r;\Omega^{(1)}\cap \Omega^c)=P(\Omega\setminus B_r; \Omega^c)-P(\Omega; (B_r)^c)=P(\Omega\cap B_r;\Omega^c)-P(\Omega; B_{r}),
 \end{equation}
which, together with \eqref{separate}, implies $P(\Omega\setminus B_r;\Omega^c)<P(\Omega\cap B_r;\Omega^c)$ for a.e. $r\ge \hat R$.
Thus, by plugging $F=\Omega\cap B_r$ into \eqref{admis 2 rep}, we conclude from \eqref{equal} that, for a.e. $r\ge \hat R$,
\begin{equation}\label{use-mazaya}
    P(\Omega; (B_r)^c)=P(\Omega\setminus B_r;\Omega^c)=\min\{P(\Omega\cap B_r;\Omega^c),\,P(\Omega\setminus B_r;\Omega^c)\} \le C_M P(B_r\cap \Omega; \Omega).
\end{equation}

Recall that \eqref{coare} gives $\mathcal H^{n-1}(\{\nu_\Omega=\nu_{B_r}\})=0$ for almost every $r>\hat R$. Then, Lemma \ref{Maggi} applied to $\Omega\cap B_r$, together with the openness of $\Omega,$ gives 
\begin{equation}\label{rhs}
    P(B_r\cap \Omega; \Omega)= P(B_r; \Omega^{(1)}\cap \Omega)+P(\Omega; B_r^{(1)}\cap \Omega)+\mathcal H^{n-1}(\{\nu_\Omega=\nu_{B_r}\}\cap \Omega)=P(B_r; \Omega^{(1)}\cap \Omega).
\end{equation}
Thanks to $\Omega\subset\Omega^{(1)}$ and $\mathcal{H}^{n-1}(\Omega\cap \partial B_r)=P(B_r; \Omega)$, \eqref{use-mazaya} and \eqref{rhs} yield
$$P(\Omega; (B_r)^c)\le C_M\mathcal{H}^{n-1}(\Omega\cap \partial B_r).$$
This together with \eqref{strictly positive} particularly yields,   
$$\mathcal{H}^{n-1}(\Omega\cap \partial B_r)>0\quad \text{ for a.e. $r\ge \hat R$,}$$
and gives the second inequality in \eqref{iso control}.

\medskip
\noindent{\bf Step 2.3: The decay estimate of $P(\Omega;(B_r)^c)$.}
Set
$$
    f(r):=\int_{r}^{+\infty} P_{\partial B_t}(\Omega\cap \partial B_t)\,dt\geq 0\quad \text{for any } \ r\in [\hat R, +\infty),
$$
which is well-defined  and absolutely continuous with respect to $r$  due to Lemma~\ref{sphere slice}. Then 
\begin{equation}\label{deriv}
    f'(r)=
-P_{ \partial B_r}(\Omega\cap \partial B_r) \quad  \text{ for a.e. }  \ r \in [ \hat R,+\infty).
\end{equation}
Observe that by \eqref{iso control} and applying  Lemma \ref{sphere slice} to the open set $\Omega$,  for a.e. $r\ge \hat R$, $\Omega\cap \partial B_r$ is a Borel set satisfying 
 \begin{align}\label{contra func 2}
 0\leq f(r)\le P(\Omega; (B_r)^c)< 2C_M \mathcal{H}^{n-1}(\Omega\cap \partial B_r).
 \end{align}

Now, by Lemma \ref{sphere isoperi} and \eqref{deriv}, for a.e. $r\ge \hat R$, there exists $B_{\partial B_r}(q_r, \rho_r)$ for some $ q_r\in \partial B_r$ with
 $\mathcal{H}^{n-1}(B_{\partial B_r}(q_r,\rho_r))=\mathcal{H}^{n-1}(\Omega\cap \partial B_r)$, such that
 \begin{equation}\label{sph isoper}
     -f'(r)\ge  P_{\partial B_r}(B_{\partial B_r}(q_r,\rho_r))\quad \text{ for a.e. }r\in [\hat R,+\infty).
 \end{equation}

By gathering \eqref{separate} and \eqref{iso control}, we have
$$
   \mathcal{H}^{n-1}(B_{\partial B_r}(q_r,\rho_r))=\mathcal{H}^{n-1}(\Omega\cap \partial B_r)\le  \mathcal{H}^{n-1}(\overline{\Omega}\cap \partial B_r)< P(\Omega; (B_r)^c)< \frac{1}{2} P(B_r),
$$
which yields $\rho_r/r< \pi/2$. Thus, $B_{\partial B_r}(q_r,\rho_r)$ is contained in some hemisphere of $\partial B_r$, and hence
 \begin{equation}\label{radius control}
 \mathcal{H}^{n-1}(B_{\partial B_r}(q_r,\rho_r))\le C(n)\rho_r^{n-1} \quad  \text{and} \quad P_{ \partial B_r}(B_{\partial B_r}(q_r,\rho_r))\ge C(n)\rho_r^{n-2}.
 \end{equation}
  Hence, via \eqref{sph isoper} and \eqref{radius control}, we deduce that for a.e. $r\in [\hat R,+\infty)$,
$$
    -f'(r)\ge C(n)\rho_r^{n-2}\ge C(n) \left(\mathcal{H}^{n-1}(B_{\partial B_r}(q_r,\rho_r))\right)^{\frac{n-2}{n-1}}= C(n)\left(\mathcal{H}^{n-1}(\Omega\cap \partial B_r)\right)^{\frac{n-2}{n-1}}.
$$
\medskip
\noindent{\bf Step 2.4: Leading to a contradiction.}
Recalling \eqref{contra func 2}, we obtain
\begin{equation}\label{f rela}
    -f'(r)> C(n,C_M) (f(r))^{\frac{n-2}{n-1}}\ge 0 \quad \text{ for a.e. } r\in [\hat R,+\infty).
\end{equation}
Moreover, it follows from \eqref{radon} that 
$$
    \lim_{r\rightarrow \infty} f(r)=0,
$$
which, together with \eqref{f rela}, implies $f(r)>0$ for all $r\in [\hat R,+\infty)$. By integrating $\frac{d}{dr}\left(f^{\frac{1}{n-1}}\right)$ and applying  \eqref{f rela} again,
\begin{equation}\label{contra point}
    f(h)^{\frac{1}{n-1}}\ge f(r)^{\frac{1}{n-1}}+(r-h)C(n,C_M)\quad \text{ for any } r>h\geq \hat R.
\end{equation}
By letting $r \to +\infty$, we conclude that $f(h) = +\infty$, which leads to a contradiction. Hence, $\Omega$ must be bounded.
\end{proof}

\subsection{Equivalence of three descriptions.}\label{equiva proof}

\begin{proof}[Proof of Theorem \ref{equiva}]
    \noindent{\bf Step 1: (i) $\Rightarrow$
 (ii).}  Assume that $\tau(\Omega)\geq 1+\delta_0$ for some $\delta_0>0$. By Lemma \ref{open rep}, up to a modification of $\mathcal L^n$-measure zero set, $\Omega$ is an open set that satisfies the condition \eqref{almost theoretic}. 
Therefore, to obtain (ii), it suffices to verify \eqref{admis 2 rep} for $\Omega$ according to Lemma \ref{mazya s and n}.  

Take an arbitrary  $\mathcal L^n$-measurable set $F\subset \Omega$. Without loss of generality, we may assume that $P(F; \Omega)<\infty$. Then it follows from \cite[Lemma 1, Page 489]{M2011} and \eqref{almost theoretic} that $P(F)<\infty$.  If $|F|=0$ (resp. $|\Omega\setminus F|=0$), then $P(F; \Omega^c)=0$  (resp. $P(\Omega\setminus F;\Omega^c)=0$), and hence \eqref{admis 2 rep} holds automatically.

If $0<|F|\le |\Omega|/2$, by Lemma \ref{Maggi} and the definition of $\tau(\Omega)$, it follows that 
\begin{equation}\label{F-small}
\tau(\Omega)P(F;\Omega^c)=\tau(\Omega)\mathcal{H}^{n-1}(\partial^*\Omega\cap \partial^* F)\le P(F)=P(F;\Omega)+P(F;\Omega^c).
\end{equation}
Thus, \eqref{admis 2 rep} holds with $C_M=(\tau(\Omega)-1)^{-1}\leq \delta^{-1}_0$. 

If $|\Omega|/2<|F|< |\Omega|$, then $0<|\Omega\setminus F|\le |\Omega|/2$. Since $\{\nu_\Omega=\nu_F\}\cap \Omega=\emptyset$ and $\Omega
\subset\Omega^{(1)}$ via the openness of $\Omega$,
by Lemma \ref{Maggi} and the fact that $\Omega\cap F^{(0)}\subset \Omega$, we have
$$
    P(\Omega\setminus F;\Omega)=P(F;\Omega)+P(\Omega;F^{(0)}\cap\Omega)+\mathcal{H}^{n-1}(\{\nu_\Omega=-\nu_F\}\cap \Omega)=P(F;\Omega). 
$$
Therefore, repeating the argument in \eqref{F-small} by replacing $F$ with $\Omega\setminus F$, we conclude that
$$P(\Omega\setminus F; \Omega^c)\leq (\tau(\Omega)-1)^{-1} P(\Omega\setminus F; \Omega)=(\tau(\Omega)-1)^{-1} P(F; \Omega),$$
which gives \eqref{admis 2 rep} by taking $C_M=(\tau(\Omega)-1)^{-1}\leq \delta^{-1}_0$.  Hence, \eqref{admis 2 rep} holds for any $\mathcal L^n$-measurable set $F\subset \Omega$. This completes the proof of (i) $\Rightarrow$ (ii).
\smallskip

\noindent{\bf Step 2: (ii) $\Rightarrow$ (iii).} Suppose that $\Omega$ is an open set satisfying \eqref{11trace} with $C_T>0$.
By Lemma \ref{mazya s and n}, $\Omega$ satisfies Maz'ya's condition \eqref{admis 2 rep} with $C_M=C_T$, and then Lemma \ref{admis bounded} shows that $\Omega$ is a bounded domain. 

Next, we show that there exists a positive constant $\Theta=\Theta(\Omega)>0$ such that, for each $x\in\partial\Omega$, there is a ball $B(x,r)$ such that, for any set $E\subset\overline{\Omega}\cap B(x,r)$ of finite perimeter, 
\begin{equation}\label{admissible-later}
	\mathcal H^{n-1}(\partial^{*} \Omega\cap \partial^* E)\le \Theta \mathcal H^{n-1}(\partial^* E\cap \Omega).
\end{equation}
Toward this, recall that  $\Omega$ is an open set satisfying \eqref{almost theoretic}. Then $|\partial \Omega|=0$ and, up to replacing $E$ by $E\cap \Omega$ if necessary, we may assume $E\subset \Omega \cap B(x,r)$.

Fix $x\in \partial\Omega$. Observe that $\mathcal H^{n-1}$ is Borel regular, and that $\mathcal{H}^{n-1}(\partial\Omega)<\infty$ follows from \eqref{almost theoretic} and $$\mathcal{H}^{n-1}(\partial^*\Omega)=P(\Omega)<+\infty.$$
Thus, by \cite[Theorem 1.7]{EG92}, $\mathcal{H}^{n-1}\llcorner \partial\Omega$ is a Radon measure. Hence, applying \cite[Theorem 1.8]{EG92} to $\mathcal H^{n-1}\llcorner\partial\Omega$  
permits us to
choose $r>0$ small enough so that 
\begin{equation}\label{small t judge}
	\mathcal H^{n-1}(\partial\Omega\cap B(x,2r))<\frac{1}{3}P(\Omega).
\end{equation}
Via Lemma \ref{Maggi}, for any set of finite perimeter $E\subset\Omega\cap B(x,r)$, we have
\begin{equation}\label{P-H}
	P(E; \Omega^c)=	\mathcal H^{n-1}(\partial^{*} \Omega\cap \partial^* E) \quad \text{and}\quad P(E; \Omega)=\mathcal H^{n-1}(\partial^* E\cap \Omega).
	\end{equation}
On the one hand, since $E\subset B(x,r)$,
 it follows from \eqref{small t judge} and \eqref{P-H} that
\begin{equation}\label{small term}
P(E;\Omega^c)=\mathcal{H}^{n-1}(\partial^*\Omega\cap \partial^* E)\le \mathcal H^{n-1}(\partial^*\Omega\cap B(x,2r))<\frac{1}{3}P(\Omega).
\end{equation}
On the other hand, as the openness of $\Omega$ gives  $\partial^*\Omega\subset \Omega^c$, Lemma \ref{Maggi} tells that 
\begin{equation}\label{divide 3}
    P(\Omega\setminus E;\Omega^c)\ge P(\Omega;E^{(0)}\cap \Omega^c)=P(\Omega;E^{(0)}),\nonumber
\end{equation}
which, together with $B(x,2r)^c\subset E^{(0)}$ and \eqref{small t judge}, implies
\begin{equation}\label{big term}
    P(\Omega\setminus E;\Omega^c)\ge P(\Omega;B(x,2r)^c)= P(\Omega)- \mathcal H^{n-1}(\partial^*\Omega\cap B(x,2r))\ge \frac{2}{3}P(\Omega).
\end{equation}

As a result, according to \eqref{small term} and \eqref{big term}, Maz'ya's condition \eqref{admis 2 rep} implies
$$P(E; \Omega^c)=\min\{P(E; \Omega^c), P(\Omega\setminus E; \Omega^c)\}\le C_M P(E;\Omega).$$
Hence, recalling \eqref{P-H}, we conclude that \eqref{admissible-later} holds with $\Theta:=C_M=C_T$. Therefore, $\Omega$ is an $\Theta$-admissible domain, finishing the proof of (ii) $\Rightarrow $ (iii). 
\smallskip

\noindent{\bf Step 3: (iii) $\Rightarrow$ (i).} Suppose that $\Omega$ is an $\Theta$-admissible domain. 
Then, it follows from
\cite[Theorem 5.10.7]{Z1989} that there exists a constant $M=M(\Omega)>0$, such that, for any $u\in BV(\Omega)$, the trace $Tu$ of the function $u$ satisfies
\begin{equation}\label{trace int}
    \int_{\partial^*\Omega}|Tu|d\mathcal{H}^{n-1}\le M\|u\|_{BV(\Omega)}.
\end{equation}
Note that by \cite[Lemma 9.6.3]{M2011},  $T\chi_F=\chi_{\partial^*F}$ holds for $\mathcal H^{n-1}$-a.e. $x\in \partial^*\Omega$. Therefore, for any set $F\subset\Omega$ of finite perimeter with $0<|F|\le |\Omega|/2$, plugging  $\chi_{F}$ into \eqref{trace int} yields
\begin{equation}\label{trace-adm}
\mathcal{H}^{n-1}(\partial^*\Omega\cap \partial^*F )=\int_{\partial^*\Omega}|T\chi_F|d\mathcal{H}^{n-1}\le M\left(\|\chi_F\|_{L^1(\Omega)}+\|D\chi_F\|(\Omega)\right),
\end{equation}

We claim that there is a constant $C'>0$ independent of $F$, such that whenever $F$ is a set of finite perimeter with $0<|F|\leq |\Omega|/2$,
\begin{equation}\label{poin ex}
    \|\chi_F\|_{L^1(\Omega)}\le C'\|D\chi_F\|(\Omega).
\end{equation}
 Once \eqref{poin ex} holds, then it follows from \eqref{trace-adm} that
$$\mathcal{H}^{n-1}(\partial^*\Omega\cap \partial^*F)\leq M(1+C') \|D\chi_F\|(\Omega) = M(1+C') P(F;\Omega).$$
Since $\mathcal{H}^{n-1}(\partial^*\Omega\cap \partial^*F)=P(F; \Omega^c)$ and $P(F)=P(F; \Omega)+P(F; \Omega^c)$, we further obtain that
$$\frac{P(F)}{\mathcal{H}^{n-1}(\partial^*\Omega\cap \partial^*F)}\geq 1+\frac{1}{M(1+C')}$$ 
whenever $F$ is a set of finite perimeter with $0<|F|\leq |\Omega|/2$. Therefore, 
 $\tau(\Omega)\geq  1+\delta_0$ with \\$\delta_0=1/(M(1+C'))>0$.

Now we prove \eqref{poin ex}. Write  
$$(\chi_F)_\Omega=|F|/|\Omega|$$
the integral average of $\chi_F$ over $\Omega$. As $\Omega$ is $\Theta$-admissible, then it supports the Poincar\'e inequality \cite[Theorem 5.11.1]{Z1989}, i.e. 
\begin{equation}\label{since}
    \|\chi_F-(\chi_F)_\Omega\|_{L^1(\Omega)}\le C''\|D\chi_F\|(\Omega),
\end{equation}
where $C''=C''(\Omega,n)>0$. Since $|F|\le |\Omega|/2$,  we  have
$$\|\chi_F\|_{L^1(\Omega)}=|F|\le 2\frac{|\Omega|-|F|}{|\Omega|}|F|=\|\chi_F-(\chi_F)_\Omega\|_{L^1(\Omega)},$$
which, together with \eqref{since}, gives \eqref{poin ex} with $C'=C''$.  This finishes the proof of (iii) $\Rightarrow $ (i).
\end{proof}

\subsection{Proof of Corollary \ref{iff}.}

\begin{proof}[Proof of Corollary \ref{iff}]
We begin with the proof of (i), which is divided into two steps.

\noindent{\bf Step 1: Prove that $\Omega$ is a John domain.} As $\Omega$ satisfies \eqref{11trace} with constant $C_T$, then, by Lemma \ref{mazya s and n}, \eqref{admis 2 rep} holds for the constant $C_M=C_T$.
Hence, by Lemma \ref{admis bounded}, $\Omega$ is a bounded domain. 

Moreover, according to Lemma \ref{Mazya-extension} and \eqref{admis 2 rep} (with $C_M=C_T$), it follows that  for any $u\in BV(\Omega)$, we can find an extension $\hat u\in BV(\mathbb R^n)$ satisfying that $\hat u=u$ over $\Omega$ and that $\hat u= c_0$ over $\Omega^c$ for some constant $c_0\in \mathbb R$,
such that \eqref{ext} holds (with $C_M=C_T$). Thus, by \eqref{ext} and the Sobolev inequality for $BV$-functions (cf. \cite[Theorem 5.10]{EG92}), we have
$$\inf_{c\in\mathbb R}\left(\int_\Omega| u-c|^{1^*}\right)^{1/1^*}\le \left(\int_{\mathbb R^n}| \hat u-c_0|^{1^*}\right)^{1/1^*}\le C(n)\|D\hat u\|(\mathbb R^n)\le C(n,C_T)\|Du\|(\Omega).$$
Therefore, recalling that $\Omega$ is a bounded domain satisfying the ball separation property with respect to $x_0$ and the constant $S>0$, we conclude from Lemma \ref{Sobolev John} that $\Omega$ is a $J$-John domain with John center $x_0$, where 
$J=J(n, C_T, S, |\Omega|/r_0^n)$ and $r_0=\dist(x_0, \Omega^c)$.

\medskip
 \noindent{\bf Step 2: Prove \eqref{hau cond 1}.} Given any $r\in (0,r_0)$ and $x\in \partial\Omega$, in order to get \eqref{hau cond 1}, we first estimate 
 $$ \int_{\partial\Omega}T\chi_{\Omega\cap B(x,r)}d \mathcal H^{n-1}.$$ 
 
 Toward this, let 
 $$I_{x,r}:=\{z\in \partial\Omega: T\chi_{\Omega\cap B(x,r)}(z)=0\}.$$
 Suppose that $\mathcal{H}^{n-1}(I_{x,r})>0$. Then for any constant $c\in\mathbb R$, via the triangle inequality  we obtain 
\begin{align*} &\int_{\partial\Omega}|T\chi_{\Omega\cap B(x,r)}|d\mathcal{H}^{n-1}- \int_{\partial\Omega}|T\chi_{\Omega\cap B(x,r)}-c|d\mathcal{H}^{n-1}\le \int_{\partial\Omega}|c|d\mathcal{H}^{n-1}(z)\nonumber\\
= & \int_{\partial\Omega}\bint_{I_{x,r}} |T\chi_{\Omega\cap B(x,r)}(y)-c| d\mathcal{H}^{n-1}(y)d\mathcal{H}^{n-1}(z) \le \frac{\mathcal H^{n-1}(\partial\Omega)}{\mathcal H^{n-1}(I_{x,r})}\int_{\partial\Omega}|T\chi_{\Omega\cap B(x,r)}(y)-c|d\mathcal{H}^{n-1}(y).
\end{align*}
Thus, by further applying \eqref{11trace} to $\chi_{\Omega\cap B(x,r)}$, we immediately get
\begin{equation}\label{int of trace}
\int_{\partial\Omega}|T\chi_{\Omega\cap B(x,r)}|d\mathcal{H}^{n-1} \le \left(1+\frac{\mathcal H^{n-1}(\partial\Omega)}{\mathcal H^{n-1}(I_{x,r})}\right) C_T \|D\chi_{\Omega\cap B(x,r)}\|(\Omega) \quad \text{ when } \ \mathcal{H}^{n-1}(I_{x,r})>0.
\end{equation}

Separately, since $\Omega$ is open and
Lemma \ref{Maggi} applied to $\partial^*(\Omega\cap B(x,r))$ yields
\begin{equation}\label{cap ball}
    \partial^*(\Omega\cap B(x,r))= (\partial^*\Omega \cap B(x,r))\cup\{\nu_\Omega=\nu_{B(x,r)}\}\cup (\partial B(x,r)\cap \Omega^{(1)}),
\end{equation}
it follows that
$$P(\Omega\cap B(x,r);\Omega)\le P(B(x,r); \overline{\Omega})\le n\omega_n r^{n-1}.$$ 
Therefore,  \eqref{int of trace} implies that, when $\mathcal{H}^{n-1}(I_{x,r})>0$,
\begin{align}\label{appex-1}
    \int_{\partial\Omega}|T\chi_{\Omega\cap B(x,r)}|d \mathcal H^{n-1}& \le \left(1+\frac{\mathcal H^{n-1}(\partial\Omega)}{\mathcal H^{n-1}(I_{x,r})}\right) C_T P(\Omega\cap B(x,r);\Omega)\nonumber\\
    &\le \left(1+\frac{\mathcal H^{n-1}(\partial\Omega)}{\mathcal H^{n-1}(I_{x,r})}\right) C_T n\omega_n r^{n-1} .
\end{align}

Next, we claim that,  for $0<r<r_0=\dist(x_0,\, \Omega^c)$,
\begin{equation}\label{vanish}
	\mathcal H^{n-1}(\partial\Omega\cap B(x,r))\le \int_{\partial\Omega}T\chi_{\Omega\cap B(x,r)}d \mathcal H^{n-1}\quad \text{and}\quad \mathcal{H}^{n-1}(I_{x,r}) \ge \frac{1}{2}n\omega_n r_0^{n-1}.
\end{equation}
Once \eqref{vanish} holds, combining it with \eqref{appex-1}
and, by the assumption \eqref{almost theoretic},
$$\mathcal H^{n-1}(\partial \Omega)=\mathcal H^{n-1}(\partial^* \Omega)=P(\Omega),$$
we obtain \eqref{hau cond 1} for $\Omega$ with $C_H=C_T(n\omega_n+2P(\Omega)/r_0^{n-1})$.

Now we prove \eqref{vanish}. 
Then by \eqref{almost theoretic}, \eqref{cap ball} and applying \cite[Lemma 9.6.3]{M2011} to $\Omega\cap B(x,r)$, we obtain that for $\mathcal H^{n-1}$-a.e. $ z\in \partial\Omega$,
\begin{equation}\label{trace range}
    \chi_{\partial^*\Omega \cap \overline{B}(x,r)}(z)\ge T\chi_{\Omega\cap B(x,r)}(z)=\chi_{\partial^*(\Omega\cap B(x,r))}(z)\ge \chi_{\partial^*\Omega \cap B(x,r)}(z).
\end{equation}
Hence, by recalling \eqref{almost theoretic} and integrating the second inequality in \eqref{trace range} over $\partial \Omega$, we obtain the first inequality in \eqref{vanish}.

It remains to show the second inequality in \eqref{vanish}. Recalling that $x\in\partial\Omega$ and $x_0\in\Omega$, we note that $r< r_0=\dist(x_0,\Omega^c)$ implies $x_0\notin \overline{B}(x,r)$. Thus, there exists an open half-space $H^+\subset \mathbb R^n$, such that  $\overline{B}(x,r)\subset H^+$ and $x_0\in \partial H^+$. Consequently, recalling the definition of  $I_{x,r}$, the first inequality of \eqref{trace range}  together with \eqref{almost theoretic} yields
\begin{align}\label{Ix 1 est}
    \mathcal H^{n-1} (I_{x,r})&\ge \mathcal H^{n-1}(\{z\in \partial\Omega:\chi_{\partial^*\Omega \cap \overline{B}(x,r)}(z)=0\})\nonumber\\
    &\ge \mathcal H^{n-1}(\partial \Omega\setminus \overline{B}(x,r))\ge \mathcal H^{n-1}(\partial \Omega\setminus H^+).
\end{align}

Let $\Omega_s$ be the set which is symmetric with respect to $\partial H^+$ and satisfies $\Omega_s\setminus H^+= \Omega\setminus H^+$. Then since $\Omega\setminus H^+$ is a set of finite perimeter with finite Lebesgue measure, its reflection $\Omega_s\cap \overline{H}^+$ with respect to $\partial H^+$ is also of finite perimeter  with finite Lebesgue measure; so is their union $\Omega_s=(\Omega_s\cap \overline{H}^+)\cup(\Omega\setminus H^+)$. Furthermore, due to the symmetry of $\Omega_s$, each limit point $z\notin H^+$ of $\Omega_s$ is also the limit point of $\Omega\setminus H^+$, thus giving  $\partial\Omega_s\setminus H^+\subset \partial\Omega\setminus H^+$. As a result, via $\partial^*\Omega_s\subset \partial\Omega_s$, we have
\begin{equation}\label{sym peri}
    P(\Omega_s)\le \mathcal{H}^{n-1}(\partial\Omega_s)\le 2\mathcal{H}^{n-1}(\partial\Omega_s\setminus H^+)\le 2\mathcal{H}^{n-1}(\partial\Omega\setminus H^+).
\end{equation}

Moreover, as $x_0\in \partial H^+$ and  $B(x_0,r_0)\subset \Omega$, it also holds that $B(x_0,r_0)\subset\Omega_s$. Therefore, combining \eqref{Ix 1 est}, \eqref{sym peri} and the isoperimetric inequality \eqref{iso 2}, we have 
$$\mathcal{H}^{n-1}(I_{x,r})\ge \mathcal{H}^{n-1}(\partial\Omega\setminus H^+)\ge \frac{1}{2}P(\Omega_s)\ge \frac{1}{2}P(B(x_0,r_0))=\frac{1}{2}n\omega_n r_0^{n-1},$$
which gives the second inequality in \eqref{vanish}.
Hence, we complete the proof of (i).

\medskip
 \noindent{\bf Step 3: Prove (ii).} 
Suppose that $\Omega$ satisfies the assumptions in (ii). Since $\Omega$ is a $J$-John domain with John center $x_0$, it follows from Definition~\ref{John} that, for every point $x\in \Omega$, any John curve $\gamma$ joining $x$ to $x_0$, and each point $y\in \gamma$, one has
$$\gamma[x,\,y]\subset B(y,\,J\dist(y,\,\partial \Omega)).$$
Thus, $\Omega$ satisfies the ball separation property with respect to $x_0$ and the constant $S=J$. 

Next, we establish \eqref{11trace} for $\Omega$. Since the $J$-John domain $\Omega$ satisfies \eqref{hau cond 1} and \eqref{almost theoretic},  \cite[Theorem 1.3]{SZ2024}
tells that for any $u\in W^{1,1}(\Omega)$, the trace $Tu$ of $u$ exists for $\mathcal H^{n-1}$-a.e. $x\in \partial\Omega$.
Furthermore, \cite[Theorem 1.3]{SZ2024} also implies that, for any $u\in W^{1,1}(\Omega)$, $\Omega$ supports the trace inequality \eqref{11trace} with constant $C_T=C_T(n, J, C_H, r_0)>0$, where $C_H>0$ is the same constant in \eqref{hau cond 1}. 

In addition,
by \cite[Theorem 3.4]{LLW21}, for any $u\in BV(\Omega)$, there exists a function $v\in W^{1,1}(\Omega)$ such that $\|Dv\|(\Omega)\leq 2\|Du\|( \Omega)$ and 
	$$\bint_{B(x,r)\cap \Omega}|v-u| \, dy\to 0\quad \text{as }r\to 0^+$$
	uniformly for all $x\in\partial\Omega$. Since the trace function $Tv$ of $v$ exists  for $\mathcal H^{n-1}$-a.e. $x\in \partial\Omega$, the trace function $Tu$ of $u$ exists and $Tu=Tv$ for $\mathcal H^{n-1}$-a.e. $x\in \partial\Omega$.
 Therefore, for any $u\in BV(\Omega)$, we have
$$\inf_{c\in \mathbb R}\int_{\partial\Omega}|Tu-c|d\mathcal{H}^{n-1}=\inf_{c\in \mathbb R}\int_{\partial\Omega}|Tv-c|d\mathcal{H}^{n-1}\le C_T \|Dv\|_{L^1(\Omega)}\leq 2C_T\|Du\|(\Omega),$$
which gives the trace inequality \eqref{11trace}. Hence, the proof of (ii) is completed.
\end{proof}

\end{document}